\newtheorem{thrm}{Theorem}[section]
\newtheorem{lem}[thrm]{Lemma}
\newtheorem{prop}[thrm]{Proposition}
\newtheorem{exam}[thrm]{Example}
\newtheorem{cor}[thrm]{Corollary}
\theoremstyle{definition}
\newtheorem{definition}[thrm]{Definition}
\newtheorem{remark}[thrm]{Remark}
\journal{...}
\begin{document}

\begin{frontmatter}


\cortext[cor1]{Corresponding author (+903562521616-3087)}

\title{Normal Soft int-Groups}


\author[]{Kenan KAYGISIZ \corref{cor1}}
\ead{kenan.kaygisiz@gop.edu.tr}
\address{Department of Mathematics, Faculty of Arts and Sciences,
Gaziosmanpa\c{s}a University, 60250 Tokat, Turkey}

\begin{abstract}
In this paper, we define normal soft int-groups and derive their
some basic properties. We also investigate some relations on $\alpha
$-inclusion, soft product and normal soft int-groups. Then we define
normalizer, quotient group and give some theorems concerning these
concepts.
\end{abstract}
\begin{keyword}
Soft set, soft product, soft int-group, normal soft int-group,
quotient group.\MSC[2010]{03G25,20N25,08A72,06D72}
\end{keyword}

\end{frontmatter}



\section{Introduction}

Zadeh \cite{zad-65} introduced the notion of a fuzzy set in 1965 to deal
with problems that contains uncertainties. In 1971, Rosenfeld \cite{ros-71}
defined the fuzzy subgroup of a group. Rosenfeld's groups made important
contributions to the development of fuzzy abstract algebra. Since then,
various works have studied analogues of results derived from classical group
theory, such as \cite{abo-93, ajm-92, akg-88, ant-79, asa-91, bhu-93,
das-81, dix-90, kim-94, kum-92, liu-82, mor-94, muj-86, muk-84}. All above
papers and others are combined by Mordeson et al. \cite{mordeson-05} in the
book titled Fuzzy Group Theory.

In 1999, Molodtsov \cite{mol-99} introduced another theory, called soft
sets, in order to deal with uncertainty. Then, Maji et al. \cite{maj-03}
defined the operations of soft sets, \c{C}a\u{g}man and Engino\u{g}lu \cite%
{cag-10} modified these definition and operations of soft sets and Ali et
al. \cite{ali-09} gave some new algebraic operations for soft sets. Sezgin
and Atag\"{u}n \cite{sez-11} analyzed the operations of soft sets. Using
these definitions, researches have been very active on the soft set theory
and many important results have been obtained in theoretical and practical
aspects.

The works of the algebraic structure of soft sets was first started by Akta%
\c{s} and \c{C}a\u{g}man \cite{akt-07}. They presented the notion of the
soft group and derived its some basic properties. Jun \cite{jun-08-1} and
Jun and Park \cite{jun-08-2} introduced soft BCK/BCI-algebras and its
application in ideal theory. Feng et al. \cite{fen-08} worked on soft
semirings, soft subsemirings, soft ideals, idealistic soft semirings and
soft semiring homomorphisms. Jun et al. \cite{jun-10} introduced notions of
soft ordered semigroup, soft ideal, and idealistic soft ordered semigroup
with their related properties. Acar et al. \cite{aca-10} gave the notion of
soft rings and investigated their properties.

\c{C}a\u{g}man et al. \cite{cag-11} gave a new kind of definition of soft
group (soft int-group) in a soft set depending on inclusion relation and
intersection of sets. This definition is completely different then the
definition of soft group in \cite{akt-07}. Kayg\i s\i z \cite{kay-11}
presented some supplementary properties of soft sets and soft int-groups and
gave some relations on $\alpha $-inclusion, soft product and soft int-groups.

In this paper, we present normal soft int-groups and investigate their
related properties. We then obtained some relations on $\alpha $-inclusion,
soft product and normal soft int-groups. In addition, we defined normalizer,
quotient group and give some theorems concerning these concepts.

\section{Preliminaries}

\subsection{\protect\bigskip Soft sets}

\bigskip In this section, we present basic definitions of soft set theory
according to \cite{cag-10}. For more detail see the papers \cite%
{maj-03,mol-99}.

Throughout this work, $U$ refers to an initial universe, $E$ is a set of
parameters and $P(U)$ is the power set of $U$. $\subset $ and $\supset $
stands for proper subset and superset, respectively.

\c{C}a\u{g}man and Engino\u{g}lu \cite{cag-10} modified the definition of
soft set defined by Molodtsov \cite{mol-99} as follows;

\begin{definition}
\label{A 10}\cite{mol-99} For any subset $A$ of $E$, a soft set $f_{A}$ over
$U$ is a set, defined by a function $f_{A}$, representing a mapping%
\begin{equation*}
f_{A}:E\longrightarrow P(U)\text{ such that }f_{A}(x)=\phi \text{ if }%
x\notin A\text{.}
\end{equation*}%
A soft set over $U$ can also be represented by the set of ordered pairs
\begin{equation*}
f_{A}=\left\{ (x,f_{A}(x)):x\in E,\text{ }f_{A}(x)\in P(U)\right\} \text{.}
\end{equation*}
\end{definition}

Note that the set of all soft sets over $U$ will be denoted by $S(U) $. From
here on, "soft set" will be used without over $U$. 

\begin{definition}
\label{A 20}\cite{cag-10} Let $f_{A}\in S(U)$. If $f_{A}(x)=\phi $ for all $%
x\in E$, then $f_{A}$ is called an empty soft set and denoted by $\Phi _{A}$.

If $f_{A}(x)=U$ for all $x\in A$, then $f_{A}$ is called $A$-universal soft
set and denoted by $f_{\widetilde{A}}$.

If $f_{A}(x)=U$, for all $x\in E$, then $f_{A}$ is called a universal soft
set and denoted by $f_{\widetilde{E}}$.\bigskip
\end{definition}

\begin{definition}
\cite{maj-03} If $f_{A}\in S(U)$, then the image (value class) of $f_{A}$ is
defined by Im$\left( f_{A}\right) =\left\{ f_{A}(x):x\in A\right\} $ and if $%
A=E$, then Im$\left( f_{E}\right) $ is called image of $E$ under $f_{A}$.
\end{definition}

\begin{definition}
\cite{kay-11} Let $f_{A}:E\longrightarrow P(U)$ be a soft set and $%
K\subseteq E.$ Then, the image of set $K$ under $f_{A}$ is defined by $%
f_{A}(K)=\underset{x\in K}{\bigcup }f_{A}(x).$
\end{definition}

Note that $f_{A}(K)=\phi $ if $K=\phi .$



\begin{definition}
\label{A 30}\cite{cag-10} Let $f_{A},f_{B}\in S(U)$. Then, $f_{A}$ is a soft
subset of $f_{B}$, denoted by $f_{A}\widetilde{\subseteq }f_{B}$, if $%
f_{A}(x)\subseteq f_{B}(x)$ for all $x\in E$.

$f_{A}$ is called a soft proper subset of $f_{B}$, denoted by $f_{A}%
\widetilde{\subset }f_{B}$, if $f_{A}(x)\subseteq f_{B}(x)$ for all $x\in E$
and $f_{A}(x)\neq f_{B}(x)$ for at least one $x\in E$.

$f_{A}$ and $f_{B}$ are called soft equal, denoted by $f_{A}=f_{B}$, if and
only if $f_{A}(x)=f_{B}(x)$ for all $x\in E$.
\end{definition}


\begin{definition}
\label{A 40}\cite{cag-10} Let $f_{A},f_{B}\in S(U)$. Then, union $f_{A}%
\widetilde{\cup }f_{B}$ and intersection $f_{A}\widetilde{\cap }f_{B}$ of $%
f_{A}$ and $f_{B}$ are defined by
\begin{equation*}
\left( f_{A}\widetilde{\cup }f_{B}\right) (x)=f_{A}(x)\cup f_{B}(x),\text{ }%
\left( f_{A}\widetilde{\cap }f_{B}\right) (x)=f_{A}(x)\cap f_{B}(x)
\end{equation*}%
for all $x\in E$, respectively.
\end{definition}


\subsection{\protect\bigskip Definitions and basic properties of soft
int-groups}

In this section, we introduce the concepts of soft int-groups, $\alpha $%
-inclusion, soft product and their basic properties according to papers by
\c{C}a\u{g}man et al. \cite{cag-11} and Kayg\i s\i z \cite{kay-11}.

Note that definitions and propositions are changed according to our
notations.


\begin{definition}
\label{B 10}\cite{cag-11} Let $G$ be a group and $f_{G}\in S(U)$. Then, $%
f_{G}$ is called a soft intersection groupoid over $U$ if $%
f_{G}(xy)\supseteq f_{G}(x)\cap f_{G}(y)$ for all $x,y\in G.$

$f_{G}$ is called a soft intersection group (int-group) over $U$ if the soft
intersection groupoid satisfies $f_{G}(x^{-1})=f_{G}(x)$ for all $x\in G.$
\end{definition}

\bigskip Note that the condition $f_{G}(x^{-1})=f_{G}(x)$ is equivalent to $%
f_{G}(x^{-1})\supseteq f_{G}(x)$ for all $x\in G.$\

Throughout this paper, $G$ denotes an arbitrary group with identity element $%
e$ and the set of all soft int-groups with parameter set $G$ over $U $ will
be denoted by $S_{G}(U)$, unless otherwise stated. For short, instead of $%
``f_{G}$ is a soft int-group with the parameter set $G$ over $U$"\ we say $%
``f_{G}$ is a soft int-group". 

\begin{thrm}
\label{B 20}\cite{cag-11} Let $f_{G}\in S_{G}(U)$. Then $f_{G}(e)\supseteq
f_{G}(x)$ for all $x\in G$.
\end{thrm}


\begin{definition}
\label{B 70}\cite{cag-11} Let $G$ be a group. Then, the soft set $f_{G}$ is
called an Abelian soft set if $f_{G}(xy)=f_{G}(yx)$ for all $x$,$y\in G$.
\end{definition}


\begin{definition}
\label{B 90}\cite{cag-11} Let $f_{G}\in S(U)$. Then, $e$-set of $f_{G}$,
denoted by $e_{f_{G}}$, is defined as
\begin{equation*}
e_{f_{G}}=\left\{ x\in G:f_{G}(x)=f_{G}(e)\right\} .
\end{equation*}
\end{definition}


\begin{thrm}
\label{B 100}\cite{cag-11} If $f_{G}\in S_{G}(U)$, then $e_{f_{G}}\leq G$.
\end{thrm}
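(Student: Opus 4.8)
The plan is to verify that $e_{f_{G}}$ satisfies the usual subgroup criterion: it is nonempty, closed under taking inverses, and closed under the group operation. Throughout I will use only Definition~\ref{B 10} (the int-group axioms $f_{G}(xy)\supseteq f_{G}(x)\cap f_{G}(y)$ and $f_{G}(x^{-1})=f_{G}(x)$) together with Theorem~\ref{B 20}, which guarantees $f_{G}(e)\supseteq f_{G}(x)$ for every $x\in G$.

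First I would observe that $e\in e_{f_{G}}$, since trivially $f_{G}(e)=f_{G}(e)$; hence the set is nonempty. Next, for closure under inverses, suppose $x\in e_{f_{G}}$, so that $f_{G}(x)=f_{G}(e)$. Applying the int-group condition $f_{G}(x^{-1})=f_{G}(x)$ immediately yields $f_{G}(x^{-1})=f_{G}(x)=f_{G}(e)$, and therefore $x^{-1}\in e_{f_{G}}$. This step is essentially automatic from the inverse axiom.

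The step I expect to require the most care is closure under products. Take $x,y\in e_{f_{G}}$, so $f_{G}(x)=f_{G}(y)=f_{G}(e)$. From the groupoid inequality I get one inclusion, namely $f_{G}(xy)\supseteq f_{G}(x)\cap f_{G}(y)=f_{G}(e)\cap f_{G}(e)=f_{G}(e)$. The reverse inclusion does not follow from the int-group axioms alone; here is where Theorem~\ref{B 20} is needed, giving $f_{G}(e)\supseteq f_{G}(xy)$. Combining the two inclusions forces $f_{G}(xy)=f_{G}(e)$, whence $xy\in e_{f_{G}}$.

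Having established nonemptiness, closure under inverses, and closure under products, I would conclude by the subgroup criterion that $e_{f_{G}}\leq G$. The only genuinely nontrivial point in the argument is recognizing that the product case needs both the groupoid inequality (for one inclusion) and Theorem~\ref{B 20} (for the opposite inclusion) to upgrade a containment into the equality $f_{G}(xy)=f_{G}(e)$; the remaining verifications are direct consequences of the defining axioms.
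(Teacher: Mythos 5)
Your proof is correct, and it is essentially the canonical argument: nonemptiness via $e$, closure under inverses from the axiom $f_{G}(x^{-1})=f_{G}(x)$, and closure under products by pairing the groupoid inequality $f_{G}(xy)\supseteq f_{G}(x)\cap f_{G}(y)$ with Theorem~\ref{B 20} to force the equality $f_{G}(xy)=f_{G}(e)$. Note that the paper states this result with a citation and no proof of its own; the cited source's argument differs from yours at most cosmetically, e.g.\ by using the one-step subgroup test (showing $xy^{-1}\in e_{f_{G}}$ directly) rather than checking inverses and products separately.
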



We modified the extension principal defined in \cite{cag-11} as follows. 
%

\begin{definition}
\bigskip \label{B 111}\cite{cag-11}Let $\varphi $ be a function from $A$
into $B$ and $f_{A},f_{B}\in S(U)$. Then, soft image of $f_{A}$ under $%
\varphi $ and soft pre-image (or soft inverse image) of $f_{B}$ under $%
\varphi $ are the soft sets $\varphi (f_{A})$ and $\varphi ^{-1}(f_{B})$
such that%
\begin{equation*}
\varphi (f_{A})\left( y\right) =\left\{
\begin{array}{cc}
\bigcup \left\{ f_{A}(x):x\in A,\text{ }\varphi (x)=y\right\} & \text{if }%
\varphi ^{-1}(y)\neq \phi \\
\phi & \text{otherwise}%
\end{array}%
\right.
\end{equation*}%
for all $y\in B$ and $\varphi ^{-1}(f_{B})(x)=f_{B}\left( \varphi (x)\right)
$ for all $x\in A$, respectively.
\end{definition}

Clearly, $\varphi (f_{A})=f_{B}$ such that $f_{B}(y)=f_{A}\left( \varphi
^{-1}(y)\right) $ for all $y\in B$ and $\varphi ^{-1}(f_{B})=f_{A}$ such
that $f_{A}\left( x\right) =f_{B}\left( \varphi (x)\right) $ for all $x\in A$%
.


\begin{thrm}
\label{B 210}\cite{kay-11} If $f_{G}\in S_{G}(U)$ and $H\leq G$, then the
restriction $f_{G}|_{H}\in S_{H}(U)$.
\end{thrm}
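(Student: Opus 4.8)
The plan is to verify directly that the restriction $f_{G}|_{H}$ satisfies the two defining conditions of a soft int-group over $U$ with parameter set $H$, namely the groupoid inequality $f_{G}|_{H}(xy)\supseteq f_{G}|_{H}(x)\cap f_{G}|_{H}(y)$ and the symmetry condition $f_{G}|_{H}(x^{-1})=f_{G}|_{H}(x)$, both for all $x,y\in H$. The whole argument hinges on a single structural fact: since $H\leq G$, the set $H$ is closed under the group operation and under inversion, so for elements drawn from $H$ the relevant products and inverses never escape $H$. This is what makes the restriction inherit the properties of $f_{G}$ rather than producing a genuinely new obstruction.

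First I would fix the meaning of the restriction: $f_{G}|_{H}(x)=f_{G}(x)$ whenever $x\in H$, and $f_{G}|_{H}(x)=\phi$ otherwise, consistent with Definition~\ref{A 10}. Then, for the groupoid condition, I would take arbitrary $x,y\in H$ and use closure of $H$ to note that $xy\in H$ as well. Consequently all three quantities $f_{G}|_{H}(xy)$, $f_{G}|_{H}(x)$, $f_{G}|_{H}(y)$ coincide with the corresponding values of $f_{G}$, and the inequality $f_{G}(xy)\supseteq f_{G}(x)\cap f_{G}(y)$, valid because $f_{G}\in S_{G}(U)$ and $x,y\in H\subseteq G$, transfers verbatim to $f_{G}|_{H}$.

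Next, for the symmetry condition, I would take $x\in H$ and invoke closure under inversion to obtain $x^{-1}\in H$. Then $f_{G}|_{H}(x^{-1})=f_{G}(x^{-1})$ and $f_{G}|_{H}(x)=f_{G}(x)$, and the identity $f_{G}(x^{-1})=f_{G}(x)$ coming from $f_{G}\in S_{G}(U)$ immediately yields $f_{G}|_{H}(x^{-1})=f_{G}|_{H}(x)$. Having checked both clauses of Definition~\ref{B 10}, I conclude $f_{G}|_{H}\in S_{H}(U)$.

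I do not expect any serious obstacle here; the only point that requires care is making explicit that the subgroup axioms for $H$ are precisely what guarantee $xy\in H$ and $x^{-1}\in H$, so that the restricted function agrees with $f_{G}$ exactly on the arguments appearing in each condition. Once that observation is in place, both properties are inherited with no computation beyond a direct substitution.
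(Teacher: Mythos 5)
Your proof is correct: the restriction inherits both conditions of Definition~\ref{B 10} precisely because $H$ is closed under products and inverses, which is the standard (and essentially only) argument; note the paper itself states this theorem as a cited result from \cite{kay-11} without reproducing a proof, and your direct verification matches what that argument must be.
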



\begin{thrm}
\label{B 220}\cite{kay-11} Let $A_{i}\leq G$ for all $i\in I$ and $%
\{f_{A_{i}}:i\in I\}$ be a family of soft int-groups. Then, $\widetilde{%
\bigcap\limits_{_{i\in I}}}f_{A_{i}}\in S_{G}(U)$.
\end{thrm}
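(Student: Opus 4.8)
The plan is to set $f=\widetilde{\bigcap}_{i\in I}f_{A_{i}}$, so that by the definition of arbitrary soft intersection (the obvious extension of Definition \ref{A 40}) we have $f(x)=\bigcap_{i\in I}f_{A_{i}}(x)$ for every $x\in E$, and then to verify directly that $f$ satisfies the two defining conditions of Definition \ref{B 10} over the group $G$. Since each $f_{A_{i}}$ is a soft set with $f_{A_{i}}(x)=\phi$ for $x\notin A_{i}$ and each $A_{i}\subseteq G$, one immediately gets $f(x)=\phi$ whenever $x\notin\bigcap_{i\in I}A_{i}$, and in particular whenever $x\notin G$; hence $f$ is a well-defined soft set whose support lies in the subgroup $\bigcap_{i\in I}A_{i}\leq G$.

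For the groupoid condition I would fix $x,y\in G$ and argue index by index, establishing $f_{A_{i}}(xy)\supseteq f(x)\cap f(y)$ for each $i\in I$ by a case split. If $x,y\in A_{i}$, then $xy\in A_{i}$ and the soft int-group property of $f_{A_{i}}$ gives $f_{A_{i}}(xy)\supseteq f_{A_{i}}(x)\cap f_{A_{i}}(y)$; combining this with the inclusions $f(x)\subseteq f_{A_{i}}(x)$ and $f(y)\subseteq f_{A_{i}}(y)$ coming from the intersection yields the claim. If instead at least one of $x,y$ lies outside $A_{i}$, say $x\notin A_{i}$, then $f_{A_{i}}(x)=\phi$, so $f(x)=\bigcap_{j}f_{A_{j}}(x)\subseteq f_{A_{i}}(x)=\phi$ and hence $f(x)\cap f(y)=\phi$, making the required inclusion trivial. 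Intersecting $f_{A_{i}}(xy)\supseteq f(x)\cap f(y)$ over all $i$ then gives $f(xy)=\bigcap_{i}f_{A_{i}}(xy)\supseteq f(x)\cap f(y)$.

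For the inverse condition I would show that $f_{A_{i}}(x^{-1})=f_{A_{i}}(x)$ holds for every $x\in G$ and every $i$, whence $f(x^{-1})=f(x)$ after intersecting. When $x\in A_{i}$ this is exactly the int-group axiom for $f_{A_{i}}$; when $x\notin A_{i}$ the subgroup property forces $x^{-1}\notin A_{i}$ as well, so both $f_{A_{i}}(x^{-1})$ and $f_{A_{i}}(x)$ equal $\phi$ and the equality is automatic.

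The only genuine difficulty is that the members of the family carry different parameter sets $A_{i}$, so the binary int-group axiom cannot be applied in one stroke; the device that overcomes this is the case analysis on membership in $A_{i}$. The key observation is that as soon as one factor $f_{A_{i}}(x)$ or $f_{A_{i}}(y)$ is empty, the whole meet $f(x)\cap f(y)$ collapses to $\phi$, which rescues the product inclusion precisely in the situation where $x$ or $y$ leaves some $A_{i}$. Everything else is a routine manipulation of intersections together with closure of each $A_{i}$ under products and inverses.
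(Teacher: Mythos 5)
Your proof is correct. Note that the paper itself gives no argument for this statement: Theorem \ref{B 220} is quoted from \cite{kay-11} as a known result, so there is no internal proof to compare against; your write-up is a correct, self-contained verification of the cited fact. Your argument is also the natural one: you reduce everything to the pointwise inclusion $f_{A_{i}}(xy)\supseteq f(x)\cap f(y)$ and the equality $f_{A_{i}}(x^{-1})=f_{A_{i}}(x)$ for each fixed $i$, and the one genuine subtlety --- that each $f_{A_{i}}$ satisfies the int-group axioms only on its own parameter set $A_{i}$, while the conclusion must hold for all $x,y\in G$ --- is exactly the point your case split on membership in $A_{i}$ resolves. You correctly use the hypothesis $A_{i}\leq G$ in both places where it is indispensable: closure of $A_{i}$ under products (so the groupoid axiom of $f_{A_{i}}$ applies when $x,y\in A_{i}$) and closure under inverses (so $x\notin A_{i}$ forces $x^{-1}\notin A_{i}$, making both sides of the inverse condition equal to $\phi$), and the observation that an empty factor collapses $f(x)\cap f(y)$ to $\phi$ disposes of all remaining cases.
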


\begin{definition}
\label{B 290}\cite{cag-11} Let $f_{A}\in S(U)$ and $\alpha \in P(U)$. Then, $%
\alpha $-inclusion of the soft set $f_{A}$, denoted by $f_{A}^{\alpha },$ is
defined as
\begin{equation*}
f_{A}^{\alpha }=\left\{ x\in A:f_{A}(x)\supseteq \alpha \right\} .
\end{equation*}
\end{definition}

\bigskip If $\alpha =\phi $ then the set\ $f_{A}^{\ast }=\left\{ x\in
A:f_{A}\left( x\right) \neq \phi \right\} $ is called support of $f_{A}$.

In particular, the set $f_{A}^{\alpha ^{\prime }}=\left\{ x\in
A:f_{A}(x)\supset \alpha \right\} $ is called strong $\alpha $-inclusion.


\begin{cor}
\label{B 300}\cite{kay-11} For any soft set $f_{A}$, if $\alpha \subseteq
\beta $ and $\alpha $,$\beta \in P(U)$, then $f_{A}^{\beta }\subseteq
f_{A}^{\alpha }.$
\end{cor}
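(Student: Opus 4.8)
The plan is to prove the containment $f_{A}^{\beta }\subseteq f_{A}^{\alpha }$ by a direct element-chasing argument resting entirely on the transitivity of set inclusion in $P(U)$. First I would fix an arbitrary element $x\in f_{A}^{\beta }$. By the definition of $\alpha $-inclusion (Definition \ref{B 290}), this means precisely that $x\in A$ and $f_{A}(x)\supseteq \beta $. The objective is to verify that this same $x$ satisfies the membership condition for $f_{A}^{\alpha }$, namely $x\in A$ (already known) together with $f_{A}(x)\supseteq \alpha $.

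Next I would bring in the hypothesis $\alpha \subseteq \beta $ and combine it with the inclusion $\beta \subseteq f_{A}(x)$ obtained above. Chaining these two via transitivity of $\subseteq $ on subsets of $U$ yields $\alpha \subseteq \beta \subseteq f_{A}(x)$, hence $f_{A}(x)\supseteq \alpha $. Applying Definition \ref{B 290} once more, this is exactly the condition placing $x$ in $f_{A}^{\alpha }$. Since $x\in f_{A}^{\beta }$ was arbitrary, every element of $f_{A}^{\beta }$ lies in $f_{A}^{\alpha }$, which is the desired conclusion $f_{A}^{\beta }\subseteq f_{A}^{\alpha }$.

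There is no genuine obstacle in this argument: no property of the group $G$, and no part of the soft int-group axioms, is needed—only the definition of $\alpha $-inclusion and transitivity of inclusion. The single point requiring care is keeping the roles of $\alpha $ and $\beta $ correctly oriented, because the conclusion \emph{reverses} the order of the hypothesis: a larger threshold $\beta $ imposes a stronger membership requirement and therefore produces the smaller inclusion set, so the inclusion of parameters $\alpha \subseteq \beta $ turns into the reverse inclusion $f_{A}^{\beta }\subseteq f_{A}^{\alpha }$ of the resulting subsets of $A$.
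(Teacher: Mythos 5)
Your proof is correct and is the standard argument: the paper states this corollary as a cited result from \cite{kay-11} without reproducing a proof, and the intended proof is exactly your element-chasing via transitivity, $x\in f_{A}^{\beta }\Rightarrow f_{A}(x)\supseteq \beta \supseteq \alpha \Rightarrow x\in f_{A}^{\alpha }$. Your closing remark about the order reversal (larger threshold, smaller inclusion set) correctly identifies the only point where care is needed.
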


%

\begin{definition}
\label{B 350}\cite{kay-11} Let $f_{A}\in S(U)$ and $\alpha \in P(U)$. Then,
the soft set $f_{A\alpha }$,\ defined by, $f_{A\alpha }(x)=\alpha $, for all
$x\in A$,\ is called $A-\alpha $ soft set. If $A$ is a singleton, say $\{w\}$%
, then $f_{w\alpha }$ is called a soft singleton (or soft point). If $\alpha
=U$, then $f_{AU}=f_{\widetilde{A}}$ is the characteristic function of $A$.
\end{definition}


%

\begin{thrm}
\label{B 367}\cite{kay-11}Let $f_{G}\in S(U)$ and $\alpha \in P(U).\ $Then,$%
\ f_{G}\in S_{G}(U)$ if and only if $f_{G}^{\alpha }$ is a subgroup of $G$,
whenever it is nonempty.
\end{thrm}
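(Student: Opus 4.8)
The plan is to prove both implications by moving between the pointwise soft-intersection conditions on $f_{G}$ and the subgroup criterion applied to the level sets $f_{G}^{\alpha }$, with the key idea being a well-chosen value of $\alpha $ in each direction.

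\textbf{Necessity.} First I would assume $f_{G}\in S_{G}(U)$ and fix $\alpha \in P(U)$ with $f_{G}^{\alpha }\neq \phi $. To establish $f_{G}^{\alpha }\leq G$ I would verify the one-step subgroup criterion. Take $x,y\in f_{G}^{\alpha }$, so that $f_{G}(x)\supseteq \alpha $ and $f_{G}(y)\supseteq \alpha $. Since $f_{G}$ is a soft int-group, $f_{G}(y^{-1})=f_{G}(y)\supseteq \alpha $, hence $y^{-1}\in f_{G}^{\alpha }$; then the groupoid condition gives $f_{G}(xy^{-1})\supseteq f_{G}(x)\cap f_{G}(y^{-1})\supseteq \alpha $, so $xy^{-1}\in f_{G}^{\alpha }$. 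Together with nonemptiness this yields that $f_{G}^{\alpha }$ is a subgroup.

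\textbf{Sufficiency.} Conversely, assume every nonempty $f_{G}^{\alpha }$ is a subgroup of $G$. For the groupoid condition I would fix $x,y\in G$ and set $\alpha =f_{G}(x)\cap f_{G}(y)$. Then $x,y\in f_{G}^{\alpha }$, which is therefore nonempty and hence a subgroup, so $xy\in f_{G}^{\alpha }$, that is $f_{G}(xy)\supseteq \alpha =f_{G}(x)\cap f_{G}(y)$. For the inverse condition I would fix $x\in G$ and take $\alpha =f_{G}(x)$; then $x\in f_{G}^{\alpha }\neq \phi $, so $f_{G}^{\alpha }$ is a subgroup and $x^{-1}\in f_{G}^{\alpha }$, giving $f_{G}(x^{-1})\supseteq f_{G}(x)$. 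By the remark following Definition \ref{B 10}, this is equivalent to $f_{G}(x^{-1})=f_{G}(x)$, which completes the proof that $f_{G}\in S_{G}(U)$.

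The computations throughout are routine; the only genuine decision is the choice of the level $\alpha $ in the sufficiency direction. The natural and essentially forced choices are $\alpha =f_{G}(x)\cap f_{G}(y)$ for the product inequality and $\alpha =f_{G}(x)$ for the inverse equality, since these are exactly the largest levels keeping the relevant elements inside $f_{G}^{\alpha }$. I do not anticipate a real obstacle here; the only point requiring care is to confirm that each level set invoked is nonempty before applying the subgroup hypothesis, and this holds by construction in every case.
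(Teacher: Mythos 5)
Your proof is correct. Note that this paper states Theorem \ref{B 367} without proof, citing \cite{kay-11}, so there is no in-paper argument to diverge from; your proof is exactly the standard level-subgroup argument used in that reference: the one-step subgroup test ($y^{-1}\in f_{G}^{\alpha }$, then $xy^{-1}\in f_{G}^{\alpha }$) for necessity, and the choices $\alpha =f_{G}(x)\cap f_{G}(y)$ and $\alpha =f_{G}(x)$ for sufficiency, with the remark following Definition \ref{B 10} correctly invoked to upgrade $f_{G}(x^{-1})\supseteq f_{G}(x)$ to equality.
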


\begin{definition}
\cite{kay-11} Let $f_{G}\in S_{G}(U)$. Then, the subgroups $f_{G}^{\alpha }$
are called soft level subgroups of $G$ for any $\alpha \in P(U)$.
\end{definition}

%

\begin{definition}
\label{B 370}\cite{kay-11} Let $G$ be a group and $A,B\subseteq G$. Then,
soft product of soft sets $f_{A}$ and $f_{B}$ is defined as
\begin{equation*}
(f_{A}\ast f_{B})(x)=\bigcup \{f_{A}(u)\cap f_{B}(v):uv=x,\text{ }u,v\in G\}
\end{equation*}%
for all $x\in G$ and inverse of $f_{A}$ is defined as%
\begin{equation*}
f_{A}^{-1}(x)=f_{A}(x^{-1})
\end{equation*}%
for all $x\in G.$
\end{definition}

%
%

\begin{thrm}
\label{B 380}\cite{kay-11} Let $G$ be a group and $f_{x\alpha }$,$f_{y\beta
}\in S_{G}(U).$\ Then, for any $x$,$y\in G$ and $\phi \subset \alpha $,$%
\beta \subseteq U$, $f_{x\alpha }\ast f_{y\beta }=f_{\left( xy\right) \left(
\alpha \cap \beta \right) }.$
\end{thrm}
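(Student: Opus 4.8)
The plan is to prove the equality of the two soft sets by evaluating both sides pointwise at an arbitrary $z\in G$ and checking that they agree everywhere. First I would record the explicit description of the soft points involved. By Definition~\ref{B 350}, a soft singleton $f_{w\gamma}$ takes the value $f_{w\gamma}(w)=\gamma$ and, being a soft set supported on $\{w\}$, satisfies $f_{w\gamma}(t)=\phi$ for every $t\neq w$. Thus each of $f_{x\alpha}$, $f_{y\beta}$, and the right-hand side $f_{(xy)(\alpha\cap\beta)}$ is determined by a single nonempty value together with $\phi$ off one point.

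Next I would expand the left-hand side using the soft product of Definition~\ref{B 370}. Fixing $z\in G$,
\begin{equation*}
(f_{x\alpha}\ast f_{y\beta})(z)=\bigcup\{f_{x\alpha}(u)\cap f_{y\beta}(v):uv=z,\ u,v\in G\}.
\end{equation*}
The crucial observation is that a term $f_{x\alpha}(u)\cap f_{y\beta}(v)$ can be nonempty only when both factors are nonempty, which by the first paragraph forces $u=x$ and $v=y$; for every other factorization at least one factor is $\phi$, so that term contributes nothing to the union. Because $\phi\subset\alpha,\beta\subseteq U$, the surviving term is genuinely nonempty and equals $\alpha\cap\beta$.

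I would then split into cases according to whether $z=xy$. By cancellation in $G$, the constraints $u=x$ and $uv=z$ determine $v=x^{-1}z$, so the admissible pair $(x,y)$ appears among the factorizations of $z$ exactly when $z=xy$. Consequently, if $z=xy$ the union collapses to the single term $\alpha\cap\beta$, while if $z\neq xy$ every term is $\phi$ and the union is $\phi$. Comparing with the right-hand soft point $f_{(xy)(\alpha\cap\beta)}$, which is $\alpha\cap\beta$ at $xy$ and $\phi$ elsewhere, yields $f_{x\alpha}\ast f_{y\beta}=f_{(xy)(\alpha\cap\beta)}$.

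I do not anticipate a serious obstacle; the argument is essentially bookkeeping with the definitions. The only step warranting care is the collapse of the union to a single surviving term, which relies on group cancellation to guarantee that the factorization $z=xy$ with prescribed first coordinate $x$ is unique.
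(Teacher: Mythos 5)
Your proof is correct. Note that the paper itself gives no proof of this statement: Theorem~\ref{B 380} is quoted as a preliminary result from \cite{kay-11}, so there is no in-paper argument to compare against. Your pointwise verification is the natural (and essentially only) argument: expand the product via Definition~\ref{B 370}, observe that every factorization $uv=z$ other than $(u,v)=(x,y)$ contributes $\phi$ because a soft point (Definition~\ref{B 350} together with Definition~\ref{A 10}) vanishes off its supporting element, and use cancellation to see that the pair $(x,y)$ occurs among the factorizations of $z$ exactly when $z=xy$. One small slip to fix: from $\phi\subset\alpha$ and $\phi\subset\beta$ you cannot conclude that the surviving term is ``genuinely nonempty,'' since $\alpha\cap\beta$ may well be empty (take $\alpha$ and $\beta$ disjoint nonempty subsets of $U$). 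Fortunately your argument never needs this: whether or not $\alpha\cap\beta=\phi$, the union collapses to $\alpha\cap\beta$ at $z=xy$ and to $\phi$ elsewhere, which is exactly the soft point $f_{(xy)(\alpha\cap\beta)}$ (and if $\alpha\cap\beta=\phi$ both sides are simply the empty soft set). So the equality stands; just delete the nonemptiness claim.
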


%
%
%

\begin{thrm}
\label{B 400}\cite{kay-11} Let $A$,$B$,$C\subseteq G$ and $f_{A}$,$f_{B}$,$%
f_{C}\in S(U).$ Then,
\begin{equation*}
\left( f_{A}\ast f_{B}\right) \ast f_{C}=f_{A}\ast \left( f_{B}\ast
f_{C}\right)
\end{equation*}
\end{thrm}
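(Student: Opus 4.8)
The plan is to prove associativity of the soft product by showing that both $\left( f_{A}\ast f_{B}\right) \ast f_{C}$ and $f_{A}\ast \left( f_{B}\ast f_{C}\right)$ agree as functions on $G$, i.e. that they take the same value on every $x\in G$. Fixing an arbitrary $x\in G$, I would expand each side directly from Definition \ref{B 370} and show that both expand to the single symmetric triple union
\begin{equation*}
\bigcup \left\{ f_{A}(u)\cap f_{B}(v)\cap f_{C}(w):uvw=x,\ u,v,w\in G\right\} .
\end{equation*}
This common three-fold expression is the natural candidate, and once both sides are reduced to it, equality is immediate.

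First I would compute the left-hand side. By definition, $\left( \left( f_{A}\ast f_{B}\right) \ast f_{C}\right)(x)=\bigcup \{ (f_{A}\ast f_{B})(p)\cap f_{C}(w):pw=x\}$, and then substitute the inner definition $(f_{A}\ast f_{B})(p)=\bigcup \{ f_{A}(u)\cap f_{B}(v):uv=p\}$. The key manipulation is distributing the intersection with $f_{C}(w)$ over the union indexed by the factorizations $uv=p$; since intersection distributes over arbitrary unions of sets in $P(U)$, this is valid. Re-indexing the resulting nested union over pairs $(p,w)$ with $pw=x$ together with pairs $(u,v)$ with $uv=p$, by the triples $(u,v,w)$ with $uvw=x$ (setting $p=uv$), collapses the left side to the symmetric triple union above. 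Then I would carry out the mirror-image computation for the right-hand side, now grouping $vw$ as the inner product, and reach the same triple union.

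The main obstacle — really the only delicate point — is the re-indexing of the double union and the justification that intersection distributes over the union. I would state explicitly that for any set $S\in P(U)$ and any family $\{T_i\}$ we have $S\cap \bigcup_i T_i=\bigcup_i (S\cap T_i)$, and that composing two layers of factorizations of $x$ is in bijection with the set of ordered triples $(u,v,w)$ satisfying $uvw=x$; this uses only that $G$ is a group so that every factorization $uv=p$, $pw=x$ corresponds to a unique triple and conversely $p$ is recovered as $uv$. Once this bookkeeping is laid out, both sides are literally the same union of members of $P(U)$, so $\left( f_{A}\ast f_{B}\right) \ast f_{C}=f_{A}\ast \left( f_{B}\ast f_{C}\right)$, completing the proof.
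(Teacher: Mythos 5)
Your proof is correct: expanding both sides via Definition \ref{B 370}, distributing intersection over the union, and re-indexing the nested unions by ordered triples $(u,v,w)$ with $uvw=x$ is exactly the standard argument for associativity of this convolution-type product, and every step (distributivity of $\cap$ over arbitrary unions in $P(U)$, the bijection between two-layer factorizations and triples) is valid. The paper itself states this theorem without proof, citing \cite{kay-11}, and your argument is the canonical one used there, so nothing further is needed.
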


%
%
%

\begin{cor}
\label{B 420}\cite{kay-11} Let $A\subseteq G$ and $f_{A}$,$f_{u\alpha }$ be
soft sets where $\alpha =f_{A}(A).$ Then, for any $x$,$u\in G$,
\begin{equation*}
(f_{u\alpha }\ast f_{A})(x)=f_{A}(u^{-1}x)\text{ and }(f_{A}\ast f_{u\alpha
})(x)=f_{A}(xu^{-1})
\end{equation*}
\end{cor}

%
%

\begin{corollary}
\label{B 430}Let $f_{A}\in S_{G}(U)$. Then,
\begin{equation*}
f_{A}\widetilde{\subseteq }f_{A}^{-1}\Leftrightarrow f_{A}^{-1}\widetilde{%
\subseteq }f_{A}\Leftrightarrow f_{A}=f_{A}^{-1}.
\end{equation*}%
$.$
\end{corollary}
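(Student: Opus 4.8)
The plan is to establish the three conditions as a cycle of implications. Write (i) for $f_{A}\widetilde{\subseteq }f_{A}^{-1}$, (ii) for $f_{A}^{-1}\widetilde{\subseteq }f_{A}$, and (iii) for $f_{A}=f_{A}^{-1}$. The whole proof hinges on a single observation drawn from Definition \ref{B 370}: since $f_{A}^{-1}(x)=f_{A}(x^{-1})$ and a soft inclusion $\widetilde{\subseteq }$ is a pointwise condition asserted for \emph{every} $x\in G$, I am free to replace the bound variable $x$ by $x^{-1}$ and use $(x^{-1})^{-1}=x$.

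First I would unpack (i): it says $f_{A}(x)\subseteq f_{A}^{-1}(x)=f_{A}(x^{-1})$ for all $x\in G$. Reading this same inclusion at the group element $x^{-1}$ in place of $x$ gives $f_{A}(x^{-1})\subseteq f_{A}(x)$ for all $x\in G$, and since $f_{A}(x^{-1})=f_{A}^{-1}(x)$ this is exactly (ii). Applying the identical substitution to (ii) recovers (i), so (i) and (ii) are equivalent. This substitution is the only step that is not purely mechanical.

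Next, assuming (i) and (ii) together, I have both $f_{A}(x)\subseteq f_{A}(x^{-1})$ and $f_{A}(x^{-1})\subseteq f_{A}(x)$ for every $x$, whence $f_{A}(x)=f_{A}(x^{-1})=f_{A}^{-1}(x)$ for all $x$; by Definition \ref{A 30} this is (iii). Conversely, (iii) forces both inclusions at once, since soft equality implies soft inclusion in either direction. Combining these, (i)$\Leftrightarrow$(ii) and each is in turn equivalent to (iii), which is the claim.

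The main obstacle---really the single idea the proof needs---is noticing that the universal quantifier over $x\in G$ permits the substitution $x\mapsto x^{-1}$, which turns one inclusion into its reverse; once this is seen, no computation remains. I would also remark that the argument never invokes the soft-int-group axiom $f_{A}(x^{-1})=f_{A}(x)$ itself, relying only on the group structure of the parameter set, so the statement in fact holds for any soft set whose parameter set is a group.
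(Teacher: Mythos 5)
Your proof is correct, and it is essentially the expected argument: the paper states Corollary \ref{B 430} without proof (it is quoted from the preliminaries of \cite{kay-11}), but the substitution $x\mapsto x^{-1}$ — legitimate because inversion is a bijection of $G$ and the inclusions are quantified over all $x\in G$ — is exactly the idea behind the paper's own remark after Definition \ref{B 10} that $f_{G}(x^{-1})=f_{G}(x)$ is equivalent to the one-sided containment $f_{G}(x^{-1})\supseteq f_{G}(x)$. Your closing observation is also accurate: only the group structure of the parameter set is used, and in fact for a genuine $f_{A}\in S_{G}(U)$ all three conditions hold automatically, since $f_{A}(x^{-1})=f_{A}(x)$ is part of Definition \ref{B 10}, so the real content of the equivalence is for arbitrary soft sets over a group.
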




\begin{thrm}
\label{B 480}\cite{kay-11} Let $f_{G}\in S(U).$ Then, $f_{G}$ is a soft
int-group if and only if $f_{G}$ satisfies the following conditions:

\begin{enumerate}
\item $\left( f_{G}\ast f_{G}\right) \widetilde{\subseteq }f_{G}$,

\item $f_{G}^{-1}=f_{G}$ (or $f_{G}\widetilde{\subseteq }f_{G}^{-1}$ or $%
f_{G}^{-1}\widetilde{\subseteq }f_{G}$).
\end{enumerate}
\end{thrm}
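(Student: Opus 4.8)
The plan is to prove that the two listed conditions are jointly equivalent to the two defining axioms of a soft int-group in Definition~\ref{B 10}, namely the groupoid condition $f_{G}(xy)\supseteq f_{G}(x)\cap f_{G}(y)$ and the inverse condition $f_{G}(x^{-1})=f_{G}(x)$. Since condition~2 corresponds to the inverse axiom and condition~1 to the groupoid axiom, I would treat the two axioms separately and then combine them.

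First I would dispatch condition~2. By the definition of the inverse soft set, $f_{G}^{-1}(x)=f_{G}(x^{-1})$, so the equality $f_{G}^{-1}=f_{G}$ is literally the statement that $f_{G}(x^{-1})=f_{G}(x)$ for all $x\in G$, that is, the inverse axiom. For the two parenthetical forms I would argue directly rather than invoking Corollary~\ref{B 430}, since that corollary presupposes $f_{G}\in S_{G}(U)$, which is precisely what we are trying to establish. If $f_{G}\widetilde{\subseteq}f_{G}^{-1}$, then $f_{G}(x)\subseteq f_{G}(x^{-1})$ for every $x$; substituting $x^{-1}$ for $x$ gives $f_{G}(x^{-1})\subseteq f_{G}(x)$, and combining the two inclusions yields $f_{G}(x)=f_{G}(x^{-1})$, hence $f_{G}^{-1}=f_{G}$. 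The inclusion $f_{G}^{-1}\widetilde{\subseteq}f_{G}$ is handled by the same substitution, so all three versions of condition~2 are equivalent and each is equivalent to the inverse axiom.

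Next I would show condition~1 is equivalent to the groupoid axiom. Writing out the soft product from Definition~\ref{B 370}, $(f_{G}\ast f_{G})(x)=\bigcup\{f_{G}(u)\cap f_{G}(v):uv=x\}$, the soft inclusion $(f_{G}\ast f_{G})\widetilde{\subseteq}f_{G}$ says $\bigcup\{f_{G}(u)\cap f_{G}(v):uv=x\}\subseteq f_{G}(x)$ for every $x\in G$. The key elementary observation is that a union of sets is contained in a fixed set if and only if each member of the union is; hence this is equivalent to requiring $f_{G}(u)\cap f_{G}(v)\subseteq f_{G}(x)$ for all $u,v$ with $uv=x$, which, letting $x=uv$ range over all products, is exactly $f_{G}(uv)\supseteq f_{G}(u)\cap f_{G}(v)$ for all $u,v\in G$.

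Finally I would combine the two equivalences: conditions~1 and~2 hold simultaneously if and only if both defining axioms hold, i.e. if and only if $f_{G}$ is a soft int-group. I do not anticipate a serious obstacle; the only point requiring care is resisting the temptation to cite Corollary~\ref{B 430} for the parenthetical forms of condition~2, since that result already assumes the int-group property, so the substitution $x\mapsto x^{-1}$ must be carried out by hand.
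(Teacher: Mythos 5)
Your proof is correct. The paper itself states Theorem \ref{B 480} without proof, citing \cite{kay-11}, so there is no in-paper argument to compare against; your decomposition into the two axioms of Definition \ref{B 10} --- the union-versus-membership argument for condition 1 and the substitution $x\mapsto x^{-1}$ for condition 2 --- is the standard route, and it matches the toolkit this paper assembles (Definition \ref{B 370} and the remark following Definition \ref{B 10}, which uses exactly your substitution trick). Your caution in not invoking Corollary \ref{B 430}, which already presupposes $f_{A}\in S_{G}(U)$ and would make the argument circular, is well-placed.
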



\begin{thrm}
\label{B 490}\cite{kay-11} Let $f_{A}$,$f_{B}\in S_{G}(U)$. Then, $\left(
f_{A}\ast f_{B}\right) $ is a soft int-group\ if and only if $f_{A}\ast
f_{B}=f_{B}\ast f_{A}$.
\end{thrm}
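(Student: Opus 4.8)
The plan is to reduce everything to the characterization of soft int-groups in Theorem~\ref{B 480}, which says $f_{G}$ is a soft int-group precisely when $(f_{G}\ast f_{G})\widetilde{\subseteq}f_{G}$ and $f_{G}^{-1}=f_{G}$. The conceptual heart of the argument is a single identity relating the inverse of a soft product to the product of the inverses, namely
\begin{equation*}
(f_{A}\ast f_{B})^{-1}=f_{B}^{-1}\ast f_{A}^{-1},
\end{equation*}
valid for arbitrary soft sets $f_{A},f_{B}$. I would establish this directly from the definitions of soft product and inverse: evaluating $(f_{A}\ast f_{B})^{-1}(x)=(f_{A}\ast f_{B})(x^{-1})$ runs over pairs $uv=x^{-1}$, and after the substitution $s=v^{-1}$, $t=u^{-1}$ (so that $st=x$ and the order reverses) this union matches exactly the defining union for $(f_{B}^{-1}\ast f_{A}^{-1})(x)$. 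Since $f_{A},f_{B}\in S_{G}(U)$ satisfy $f_{A}^{-1}=f_{A}$ and $f_{B}^{-1}=f_{B}$, this specializes to the crucial relation $(f_{A}\ast f_{B})^{-1}=f_{B}\ast f_{A}$.

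For the forward direction, assume $f_{A}\ast f_{B}$ is a soft int-group. The second condition of Theorem~\ref{B 480} then gives $(f_{A}\ast f_{B})^{-1}=f_{A}\ast f_{B}$, and combining this with the crucial relation yields $f_{A}\ast f_{B}=f_{B}\ast f_{A}$ immediately. So this direction is essentially free once the identity is in hand.

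For the converse, assume $f_{A}\ast f_{B}=f_{B}\ast f_{A}$; I must verify both conditions of Theorem~\ref{B 480} for $f_{A}\ast f_{B}$. The inverse condition is immediate: the crucial relation together with the hypothesis gives $(f_{A}\ast f_{B})^{-1}=f_{B}\ast f_{A}=f_{A}\ast f_{B}$. For the closure condition, I would use associativity (Theorem~\ref{B 400}) to regroup the fourfold product and then apply commutativity, obtaining
\begin{equation*}
(f_{A}\ast f_{B})\ast(f_{A}\ast f_{B})=f_{A}\ast(f_{B}\ast f_{A})\ast f_{B}=f_{A}\ast(f_{A}\ast f_{B})\ast f_{B}=(f_{A}\ast f_{A})\ast(f_{B}\ast f_{B}).
\end{equation*}
Since $f_{A},f_{B}$ are soft int-groups, Theorem~\ref{B 480} gives $f_{A}\ast f_{A}\widetilde{\subseteq}f_{A}$ and $f_{B}\ast f_{B}\widetilde{\subseteq}f_{B}$. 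Applying monotonicity of the soft product in each factor (which follows routinely from its definition as a union of intersections) yields $(f_{A}\ast f_{A})\ast(f_{B}\ast f_{B})\widetilde{\subseteq}f_{A}\ast f_{B}$, which is exactly the closure condition, so $f_{A}\ast f_{B}$ is a soft int-group.

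The main obstacle, in my estimation, is not any single computation but setting up the inverse-of-a-product identity correctly, since its index substitution is where one must track the order reversal carefully. The converse's regrouping and the monotonicity step are routine, though one should confirm that monotonicity of $\ast$ holds in both arguments before invoking it.
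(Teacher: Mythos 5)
Your proof is correct: the reversal identity $(f_{A}\ast f_{B})^{-1}=f_{B}^{-1}\ast f_{A}^{-1}$, combined with the two-condition characterization of Theorem~\ref{B 480}, associativity (Theorem~\ref{B 400}), and monotonicity of $\ast$, yields both directions with no gaps. The paper itself states this theorem without proof (it is imported from \cite{kay-11}), but your argument is the standard one and coincides with the technique this paper uses elsewhere, e.g.\ in the proof of Theorem~\ref{C 228}, where the same fourfold regrouping $\left( f_{A}\ast f_{B}\right) \ast \left( f_{A}\ast f_{B}\right) =\left( f_{A}\ast f_{A}\right) \ast \left( f_{B}\ast f_{B}\right) \widetilde{\subseteq } f_{A}\ast f_{B}$ and the same $u\mapsto u^{-1}$, $v\mapsto v^{-1}$ substitution for the inverse condition appear.
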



\section{Normal soft int-groups and cosets}

\bigskip Normal subgroup is very important concept in classical group
theory. In this section, we introduce the notion of normal soft int-groups,
and obtain the analogues to the classical group theory and fuzzy group
theory.


\begin{definition}
\label{C 10}Let $f_{A}\in S_{G}(U)$. Then, $f_{A}$ is called soft normal in $%
G$ (or normal soft int-group), if $f_{A}$ is an Abelian soft set.\bigskip
\end{definition}

Let $NS_{G}(U)$ denotes the set of all normal soft int-groups in $G$.


\begin{cor}
\label{C 15} The following conditions are equivalent:

\begin{enumerate}
\item $f_{A}$ is a normal soft int-group,

\item $f_{A}$ is soft Abelian,

\item $f_{A}$ is constant in the conjugate class of $A$, that is, $%
f_{A}(xyx^{-1})=f_{A}(y)$ for all $x$,$y\in A$,

\item $f_{A}(xyx^{-1})\supseteq f_{A}(y)$ for all $x$,$y\in A$,

\item $f_{A}(xyx^{-1})\subseteq f_{A}(y)$ for all $x$,$y\in A$.
\end{enumerate}
\end{cor}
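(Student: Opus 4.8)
The plan is to prove the five conditions equivalent by establishing a cycle of implications, exploiting the definitions carefully. The definition of a normal soft int-group (Definition~\ref{C 10}) says precisely that $f_A$ is an Abelian soft set, so the equivalence (1)$\Leftrightarrow$(2) is immediate from the definition and requires no argument. The substance of the corollary therefore lies in relating the Abelian condition $f_A(xy)=f_A(yx)$ to the conjugation conditions (3), (4), (5). I would arrange the remaining work as (2)$\Rightarrow$(3)$\Rightarrow$(4)$\Rightarrow$(5)$\Rightarrow$(2), or more economically prove (2)$\Leftrightarrow$(3) directly and then show (3)$\Rightarrow$(4)$\Rightarrow$(5)$\Rightarrow$(3).

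For the key algebraic link (2)$\Leftrightarrow$(3), the idea is simply to substitute cleverly. Assuming $f_A$ is Abelian, I would apply the defining equation $f_A(ab)=f_A(ba)$ with the choices $a=xy$ and $b=x^{-1}$, which gives $f_A\bigl((xy)x^{-1}\bigr)=f_A\bigl(x^{-1}(xy)\bigr)=f_A(y)$, yielding condition (3). Conversely, assuming (3) holds, i.e.\ $f_A(xyx^{-1})=f_A(y)$ for all $x,y$, I would recover the Abelian law by setting $y$ to a product and choosing the conjugating element so that $xyx^{-1}$ collapses to the swapped product; concretely, replacing $y$ by $yx$ and conjugating by $x$ gives $f_A(x(yx)x^{-1})=f_A(yx)$, i.e.\ $f_A(xy)=f_A(yx)$.

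The implications (3)$\Rightarrow$(4) and (3)$\Rightarrow$(5) are trivial, since equality implies both inclusions. The only real content among the inclusion conditions is to show that each one-sided inclusion already forces the reverse inclusion, so that (4) and (5) each individually recover (3). For (4)$\Rightarrow$(3), I would use the standard trick of substituting the conjugate element: since $f_A(xyx^{-1})\supseteq f_A(y)$ holds for all $x,y$, I replace $x$ by $x^{-1}$ and $y$ by $xyx^{-1}$ (or equivalently apply the inequality to the element $x^{-1}(xyx^{-1})x=y$), obtaining $f_A(y)=f_A(x^{-1}(xyx^{-1})x)\supseteq f_A(xyx^{-1})$, which is the reverse inclusion; combining the two gives equality. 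The argument for (5)$\Rightarrow$(3) is symmetric.

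The main obstacle, though a minor one, is bookkeeping with the quantifiers and the substitutions: one must be careful that the hypotheses genuinely hold \emph{for all} $x,y\in A$ (equivalently in $G$) before re-instantiating the variables, and that all the substituted elements remain in the relevant set so the hypotheses apply. Once the substitution pattern is fixed, every step is a one-line identity, so I do not anticipate any deeper difficulty; the entire proof is a short chain of variable replacements in the single functional equation defining the Abelian/normal condition.
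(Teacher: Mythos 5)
Your proposal is correct, but it does not follow the paper's route for the simple reason that the paper has no internal proof of this corollary at all: its ``proof'' is the single line ``See \cite[Teorem 8]{cag-11}'', i.e.\ a citation to an external reference. Your argument supplies the missing content in an elementary, self-contained way, and every step checks out. The equivalence (1)$\Leftrightarrow$(2) is indeed just Definition~\ref{C 10}; (2)$\Rightarrow$(3) follows from $f_A(ab)=f_A(ba)$ with $a=xy$, $b=x^{-1}$; (3)$\Rightarrow$(2) follows by applying (3) to the pair $(x,yx)$, since $x(yx)x^{-1}=xy$; and each one-sided inclusion recovers equality by instantiating it at the pair $\left(x^{-1},\,xyx^{-1}\right)$, because $x^{-1}\left(xyx^{-1}\right)x=y$, so (4) yields $f_A(y)\supseteq f_A(xyx^{-1})$ and hence (3), and symmetrically for (5). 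What your approach buys is precisely the self-containedness the paper lacks; what the paper's citation buys is brevity at the cost of opacity. One caveat you rightly flag, and which is worth keeping explicit: as stated, conditions (3)--(5) quantify over $x,y\in A$, whereas the Abelian property (Definition~\ref{B 70}) quantifies over all of $G$, and your substitutions ($y\mapsto yx$, $x\mapsto x^{-1}$, $y\mapsto xyx^{-1}$) require the hypotheses at elements that need not lie in $A$ unless $A$ is a subgroup or unless all conditions are read over $G$ --- which is how the paper itself uses them elsewhere (e.g.\ in Theorems~\ref{C 110} and~\ref{C 246}), so this is a defect of the statement's phrasing rather than of your argument.
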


\begin{proof}
See \cite[Teorem 8]{cag-11}.
\end{proof}


\begin{cor}
\label{C 20} If $G$ is an Abelian group, then any soft int-group in $G$ is a
normal soft int-group.
\end{cor}

\begin{cor}
If $f_{A}\in S_{G}(U)$ and $A\vartriangleleft G,$ then \bigskip $f_{A}\in
NS_{G}(U)$.
\end{cor}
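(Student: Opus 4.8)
The plan is to reduce the assertion to one of the equivalent forms of normality collected in Corollary~\ref{C 15}. Since $f_{A}\in NS_{G}(U)$ means exactly that $f_{A}$ is an Abelian soft set, it suffices by Corollary~\ref{C 15} to verify the conjugation identity $f_{A}(xyx^{-1})=f_{A}(y)$; this form, rather than $f_{A}(xy)=f_{A}(yx)$, is the one that interacts cleanly with the hypothesis $A\vartriangleleft G$. There are two natural routes: an element-wise verification of the conjugation identity, or a reduction through the soft level subgroups $f_{A}^{\alpha}$ supplied by Theorem~\ref{B 367}. I would organize the argument around the second route, because it isolates the real content most sharply.

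For the level-subgroup route I would first record the soft analogue of the classical characterization of normality: $f_{A}$ satisfies $f_{A}(xyx^{-1})=f_{A}(y)$ for all $x,y$ if and only if every nonempty soft level subgroup $f_{A}^{\alpha}=\{x\in A:f_{A}(x)\supseteq\alpha\}$ is normal in $G$. The forward direction is immediate, since $y\in f_{A}^{\alpha}$ gives $f_{A}(xyx^{-1})=f_{A}(y)\supseteq\alpha$, whence $xyx^{-1}\in f_{A}^{\alpha}$; the converse is the routine threshold argument comparing the $\alpha$-inclusions of $y$ and of $xyx^{-1}$. Granting this, the goal becomes to show that each $f_{A}^{\alpha}$ is normal in $G$.

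At the element level the bookkeeping is easy to dispose of. If $y\notin A$ then $f_{A}(y)=\phi$ by the parameter-set convention, and normality of $A$ forces $xyx^{-1}\notin A$ (otherwise $y=x^{-1}(xyx^{-1})x\in x^{-1}Ax=A$), so $f_{A}(xyx^{-1})=\phi$ as well and the identity holds trivially off the support. Thus the entire difficulty is concentrated in the case $y\in A$, $x\in G$, where $A\vartriangleleft G$ at least guarantees $xyx^{-1}\in A$, so both sides lie in the parameter set; equivalently, each $f_{A}^{\alpha}$ is a subgroup contained in the normal subgroup $A$.

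The main obstacle is precisely the passage from $f_{A}^{\alpha}\leq A\vartriangleleft G$ to $f_{A}^{\alpha}\vartriangleleft G$, which is the heart of the claim and is not a formal consequence of the hypotheses, since normality is not transitive and a subgroup of a normal subgroup need not be normal in the ambient group. Seen element-wise, the same gap appears when one feeds $x,y,x^{-1}$ into the groupoid inequality of Definition~\ref{B 10}: one obtains $f_{A}(xyx^{-1})\supseteq f_{A}(x)\cap f_{A}(y)\cap f_{A}(x^{-1})$, but for a conjugator $x\notin A$ one has $f_{A}(x)=\phi$, so the inequality degenerates to $f_{A}(xyx^{-1})\supseteq\phi$ and transmits no information about the values. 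I therefore expect the decisive step to require either that the conjugators be restricted to $A$ or some further hypothesis forcing $f_{A}$ to be constant on the $G$-conjugacy classes sitting inside $A$; the closure and off-support parts are routine, and the genuine use of the hypothesis must be made to bear exactly on this value-equality.
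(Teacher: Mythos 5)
Your diagnosis is correct, and you should trust it: there is no proof to find, because the corollary as literally stated is false, and the paper itself offers no argument for it (it is stated bare, as if immediate from Corollary~\ref{C 15}). The obstruction you isolate --- that $f_{A}^{\alpha}\leq A\vartriangleleft G$ does not force $f_{A}^{\alpha}\vartriangleleft G$, and that the int-group inequality $f_{A}(xyx^{-1})\supseteq f_{A}(x)\cap f_{A}(y)\cap f_{A}(x^{-1})$ carries no information once the conjugator $x$ leaves the support of $f_{A}$ --- is a defect of the statement, not of your technique, and it can be certified by a counterexample that the paper itself supplies. In Example~\ref{C 210}, the characteristic function $f_{\widetilde{H}}$ of $H=\{e,v\}$ in $G=D_{3}$ (the set called $A$ in that example) is a soft int-group which is not a normal soft int-group, since $f_{\widetilde{H}}(uvu^{-1})=\phi\neq U=f_{\widetilde{H}}(v)$. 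But by Definition~\ref{A 10} a soft set may carry as its subscript any set containing its support, so this very function is also a soft set $f_{A}$ with $A=G$; then $f_{A}\in S_{G}(U)$ and $A=G\vartriangleleft G$, yet $f_{A}\notin NS_{G}(U)$. The counterexample even survives the stricter convention that the subscript be exactly the support: replace the value $\phi$ on $G\setminus H$ by a fixed $\beta$ with $\phi\subset\beta\subset U$; by Theorem~\ref{B 367} the result is still a soft int-group (its nonempty level subgroups are $H$ and $G$), its support is all of $G=A$, and it is still not Abelian.

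The statement the author evidently intended is exactly the repair you predicted when you asked that $f_{A}$ be ``constant on the $G$-conjugacy classes sitting inside $A$'', in its simplest form: $f_{A}$ constant on $A$, i.e.\ $f_{A}=f_{\widetilde{A}}$, or more generally the $A$-$\alpha$ soft set $f_{A\alpha}$ of Definition~\ref{B 350}. Under that reading the corollary is true, is the natural companion to Example~\ref{C 210}, and needs none of the level-subgroup machinery you were contemplating: $f_{A\alpha}\in S_{G}(U)$ by Theorem~\ref{B 367}, since its only nonempty level subgroup is $A$ itself; and since $A\vartriangleleft G$ and $yx=y(xy)y^{-1}$, one has $xy\in A\Leftrightarrow yx\in A$ for all $x,y\in G$, so $f_{A\alpha}(xy)$ and $f_{A\alpha}(yx)$ are simultaneously $\alpha$ or simultaneously $\phi$. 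Hence $f_{A\alpha}$ is an Abelian soft set and lies in $NS_{G}(U)$ by Definition~\ref{C 10}. In short, your analysis of why the literal statement resists proof is sound; the only thing missing from your write-up is the explicit counterexample certifying that the missing step cannot exist, and the observation that the intended hypothesis (constancy on $A$) reduces the whole corollary to one line.
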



\begin{cor}
\label{C 30} Let $f_{G}\in S_{G}(U).$ Then, $f_{\widetilde{G}}$ and $%
f_{\left( e_{f_{G}}\right) (f_{G}(e))}$ are normal soft int-groups.
\end{cor}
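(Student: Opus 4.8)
The plan is to check that each of the two soft sets satisfies the two defining requirements of a normal soft int-group: membership in $S_{G}(U)$ together with the Abelian (equivalently, conjugation-invariant) property furnished by Corollary~\ref{C 15}. I would dispatch $f_{\widetilde{G}}$ first, since by Definition~\ref{A 20} it is the constant soft set with $f_{\widetilde{G}}(x)=U$ for all $x\in G$. Here every requirement is immediate: $f_{\widetilde{G}}(xy)=U=U\cap U=f_{\widetilde{G}}(x)\cap f_{\widetilde{G}}(y)$ and $f_{\widetilde{G}}(x^{-1})=U=f_{\widetilde{G}}(x)$ give the soft int-group axioms, while $f_{\widetilde{G}}(xy)=U=f_{\widetilde{G}}(yx)$ gives the Abelian condition of Definition~\ref{B 70}.

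For the second soft set, write $g=f_{(e_{f_{G}})(f_{G}(e))}$, so that by Definition~\ref{B 350} we have $g(x)=f_{G}(e)$ when $x\in e_{f_{G}}$ and $g(x)=\phi$ otherwise. I would first record an absorption lemma: if $w\in e_{f_{G}}$, then $f_{G}(aw)=f_{G}(a)=f_{G}(wa)$ for every $a\in G$. This follows from the groupoid axiom and Theorem~\ref{B 20}, since $f_{G}(wa)\supseteq f_{G}(w)\cap f_{G}(a)=f_{G}(e)\cap f_{G}(a)=f_{G}(a)$, and conversely $f_{G}(a)=f_{G}(w^{-1}(wa))\supseteq f_{G}(e)\cap f_{G}(wa)=f_{G}(wa)$; the left-hand statement is symmetric.

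Next I would verify $g\in S_{G}(U)$. By Theorem~\ref{B 100} the support $e_{f_{G}}$ is a subgroup of $G$, so the groupoid axiom $g(xy)\supseteq g(x)\cap g(y)$ follows by cases: when $x,y\in e_{f_{G}}$ we have $g(x)\cap g(y)=f_{G}(e)$ and $xy\in e_{f_{G}}$ forces $g(xy)=f_{G}(e)$; otherwise one factor lies outside $e_{f_{G}}$, so $g(x)\cap g(y)=\phi$ and the inclusion is trivial. Closure of $e_{f_{G}}$ under inverses gives $g(x^{-1})=g(x)$. (Alternatively one can invoke Theorem~\ref{B 367}, as each nonempty $\alpha$-inclusion of $g$ is the subgroup $e_{f_{G}}$.)

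I expect the main obstacle to be the Abelian condition for $g$, which by Corollary~\ref{C 15} is equivalent to $g(zwz^{-1})=g(w)$ for all $z,w\in G$, and which in turn amounts to the normality $e_{f_{G}}\vartriangleleft G$. This is where the real work sits. If $f_{G}$ is itself a normal soft int-group, the argument closes cleanly: applying condition~(3) of Corollary~\ref{C 15} to $f_{G}$ gives $f_{G}(zwz^{-1})=f_{G}(w)=f_{G}(e)$ for $w\in e_{f_{G}}$, hence $zwz^{-1}\in e_{f_{G}}$ and $g(zwz^{-1})=g(w)$. For a general $f_{G}\in S_{G}(U)$, however, the absorption lemma only yields $f_{G}(zwz^{-1})\supseteq f_{G}(z)$, which need not reach $f_{G}(e)$, so closing $e_{f_{G}}$ under conjugation is precisely the delicate point; I would therefore scrutinise whether the intended hypothesis is in fact $f_{G}\in NS_{G}(U)$ before committing to the final write-up.
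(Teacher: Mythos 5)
Your handling of $f_{\widetilde{G}}$ is complete and correct, and so is your verification that $g:=f_{\left( e_{f_{G}}\right) (f_{G}(e))}$ is a soft int-group (via $e_{f_{G}}\leq G$, Theorem~\ref{B 100}, or alternatively via Theorem~\ref{B 367}); note only that $e_{f_{G}}$ is the $e$-set, not the support, of $f_{G}$. There is no argument in the paper to compare against: the corollary is stated without proof. What matters is your final paragraph, and there your suspicion is exactly right.

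The second claim is false as literally stated, and the paper itself supplies the counterexample. Take $f_{G}=f_{\widetilde{A}}$ with $A=\{e,v\}\subset D_{3}$ as in Example~\ref{C 210}: this is a soft int-group in $S_{G}(U)$ with $f_{G}(e)=U$ and $e_{f_{G}}=A$, so by Definition~\ref{B 350} we get $f_{\left( e_{f_{G}}\right) (f_{G}(e))}=f_{AU}=f_{\widetilde{A}}$, which Example~\ref{C 210} shows is \emph{not} a normal soft int-group, since $f_{\widetilde{A}}(uvu^{-1})=\phi \neq U=f_{\widetilde{A}}(v)$. Structurally the point is the one you isolated: by Theorem~\ref{C 220}, normality of $g$ is equivalent to $e_{f_{G}}\vartriangleleft G$, and the $e$-set of a soft int-group can be an arbitrary subgroup $H\leq G$ (take the characteristic function $f_{\widetilde{H}}$), so the unrestricted statement would force every subgroup of every group to be normal. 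The intended hypothesis is $f_{G}\in NS_{G}(U)$: that is precisely how the corollary is invoked in the proof of Theorem~\ref{C 100} (to conclude $e_{f_{A}}\vartriangleleft G$ from $f_{A}\in NS_{G}(U)$), and it matches Corollary~\ref{C 221}. Under that hypothesis your argument closes just as you sketch: Corollary~\ref{C 15}(3) applied to $f_{G}$ gives $f_{G}(zwz^{-1})=f_{G}(w)$ for all $z,w\in G$, hence $w\in e_{f_{G}}\Leftrightarrow zwz^{-1}\in e_{f_{G}}$, and therefore $g(zwz^{-1})=g(w)$ in both cases of your case split. So keep your write-up, strengthen the hypothesis to $f_{G}\in NS_{G}(U)$, and the proof is complete; your refusal to commit to the unrestricted statement was the correct call, not a gap in your argument.
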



\begin{cor}
\label{C 35} Let $A_{i}\leq G$ for all $i\in I$ and $\{f_{A_{i}}:i\in I\}$
be a family of normal soft int-groups in $G$. Then, $\widetilde{%
\bigcap\limits_{_{i\in I}}}f_{A_{i}}\in NS_{G}(U)$.
\end{cor}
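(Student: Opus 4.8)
The plan is to lean on Theorem \ref{B 220}, which already supplies the soft int-group structure, and then upgrade it to normality by a coordinatewise check of the Abelian condition. Write $g=\widetilde{\bigcap\limits_{_{i\in I}}}f_{A_{i}}$ for the soft intersection. Since each $A_{i}\leq G$ and each $f_{A_{i}}$ is in particular a soft int-group, Theorem \ref{B 220} immediately gives $g\in S_{G}(U)$. Hence the only thing left to establish is that $g$ is soft normal, i.e. an Abelian soft set in the sense of Definition \ref{C 10}.

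For this I would first recall from Definition \ref{A 40} (read for an arbitrary family rather than just two soft sets) that the intersection is computed pointwise, namely $g(x)=\bigcap_{i\in I}f_{A_{i}}(x)$ for every $x\in G$. The guiding idea is that any equation holding for each $f_{A_{i}}$ separately at a fixed pair of arguments survives the intersection, because $\bigcap_{i}S_{i}=\bigcap_{i}T_{i}$ whenever $S_{i}=T_{i}$ for all $i$. Concretely, fix $x,y\in G$. Each $f_{A_{i}}$ lies in $NS_{G}(U)$, so by Corollary \ref{C 15} it is Abelian and satisfies $f_{A_{i}}(xy)=f_{A_{i}}(yx)$. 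Intersecting these equalities over $i\in I$ yields $g(xy)=\bigcap_{i\in I}f_{A_{i}}(xy)=\bigcap_{i\in I}f_{A_{i}}(yx)=g(yx)$, which is exactly the Abelian condition for $g$. Together with $g\in S_{G}(U)$, this shows $g$ is a normal soft int-group, so $g\in NS_{G}(U)$.

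I could equally run the argument through the conjugate form of Corollary \ref{C 15}: from $f_{A_{i}}(xyx^{-1})=f_{A_{i}}(y)$ for each $i$ one gets $g(xyx^{-1})=\bigcap_{i\in I}f_{A_{i}}(xyx^{-1})=\bigcap_{i\in I}f_{A_{i}}(y)=g(y)$, and Corollary \ref{C 15} again delivers normality. Either route is a routine verification, so I do not expect a genuine obstacle here; the one point deserving a little care is the bookkeeping of parameter sets—one should note that outside the relevant supports every factor takes the value $\phi$, so the pointwise computation and the passage of equalities through $\bigcap_{i}$ stay valid for all $x,y\in G$ rather than merely on the individual $A_{i}$. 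The substantive part, that the intersection is again an int-group, is already absorbed into Theorem \ref{B 220}, and normality is inherited automatically from the fact that intersection respects equalities.
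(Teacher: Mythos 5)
Your proof is correct, and it is essentially the argument the paper intends: the corollary is stated there without proof as a routine consequence of Theorem \ref{B 220} together with the fact that the Abelian identity $f_{A_{i}}(xy)=f_{A_{i}}(yx)$ passes pointwise through the intersection, which is exactly what you wrote out. One small caution on your alternative route: the conjugation conditions in Corollary \ref{C 15} are stated only for $x,y\in A$, and since the subgroups $A_{i}$ vary with $i$ you should instead invoke the conjugation identity for all $x,y\in G$ (as in Theorem \ref{C 246}); your primary route via the Abelian condition avoids this issue entirely.
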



\begin{thrm}
\label{C 90} Let $A\subseteq G\ $and $f_{A}\in S_{G}(U)$. Then, $f_{A}$ is
normal soft int-group (Abelian) if and only if $f_{A}\left( \left[ x,y\right]
\right) =f_{A}\left( e\right) $ for all $x$,$y\in G$, where $\left[ x,y%
\right] =x^{-1}y^{-1}xy$ is commutator of $x$ and $y$.
\end{thrm}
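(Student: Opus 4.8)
The plan is to prove the two implications separately, keeping in view Theorem~\ref{B 20} (so that $f_A(e)\supseteq f_A(g)$ for every $g\in G$) and the defining groupoid inequality $f_A(ab)\supseteq f_A(a)\cap f_A(b)$. For the implication ``commutator condition $\Rightarrow$ normal'', I would assume $f_A([x,y])=f_A(e)$ for all $x,y\in G$ and verify the Abelian identity $f_A(xy)=f_A(yx)$ of Definition~\ref{B 70}. The algebraic engine is that $xy$ and $yx$ differ only by a commutator: $yx=(xy)\,[y,x]$ with $[y,x]=y^{-1}x^{-1}yx$, and symmetrically $xy=(yx)\,[x,y]$. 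Feeding the first identity into the groupoid inequality gives $f_A(yx)\supseteq f_A(xy)\cap f_A([y,x])$; since $f_A([y,x])=f_A(e)\supseteq f_A(xy)$ by hypothesis and Theorem~\ref{B 20}, the intersection collapses to $f_A(xy)$, so $f_A(yx)\supseteq f_A(xy)$. The second identity yields the reverse inclusion identically, giving $f_A(xy)=f_A(yx)$, i.e. $f_A$ is a normal soft int-group. I expect this half to be routine once the two commutator identities are recorded.

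For the converse ``normal $\Rightarrow$ commutator condition'' I would begin from the conjugation-invariant form of normality in Corollary~\ref{C 15}, namely $f_A(ghg^{-1})=f_A(h)$, and try to evaluate $f_A([x,y])=f_A(x^{-1}y^{-1}xy)$, e.g. by writing $x^{-1}y^{-1}xy=(x^{-1}y^{-1}x)\,y$ and using that $x^{-1}y^{-1}x$ is conjugate to $y^{-1}$, or by combining conjugacy-class constancy with the groupoid inequality. \emph{This is the step I expect to be the genuine obstacle.} Each such manipulation only delivers a lower bound of the shape $f_A([x,y])\supseteq f_A(x)\cap f_A(y)$, which in fact already holds for every soft int-group (normal or not) and is far weaker than the asserted equality $f_A([x,y])=f_A(e)$.

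Indeed, $f_A([x,y])=f_A(e)$ for all $x,y$ is equivalent to $e_{f_A}\supseteq[G,G]$: by Theorem~\ref{B 100} the $e$-set $e_{f_A}$ is a subgroup, and the condition says precisely that every commutator lies in it, which forces $G/e_{f_A}$ to be Abelian. This is strictly stronger than normality of $f_A$, so before committing to a proof of the converse I would stress-test the statement on small non-abelian groups; a normal $f_A$ whose top level set $e_{f_A}$ fails to contain $[G,G]$ (for instance on $S_3$ with $e_{f_A}=\{e\}$, where commutators fill the cyclic subgroup of order three) would be a counterexample. If that phenomenon persists, the correct hypothesis is $e_{f_A}\supseteq[G,G]$ rather than plain normality, and I would reformulate the theorem accordingly while retaining the clean first implication above.
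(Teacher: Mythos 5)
Your first implication is exactly the paper's own proof: the paper computes
$f_{A}(xy)=f_{A}\bigl((yx)(yx)^{-1}(xy)\bigr)\supseteq f_{A}(yx)\cap f_{A}\bigl(x^{-1}y^{-1}xy\bigr)=f_{A}(yx)\cap f_{A}(e)=f_{A}(yx)$ and then repeats the computation with $x$ and $y$ interchanged, which is precisely your use of the identities $yx=(xy)[y,x]$ and $xy=(yx)[x,y]$ together with Theorem~\ref{B 20}. No difference there.

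Your refusal to certify the converse is not a gap in your attempt but a genuine error in the paper: the paper's entire treatment of that direction is the sentence ``The converse is obvious,'' and the converse is in fact false, exactly as you suspect. Your counterexample works; in the paper's notation, take $G=D_{3}$ as in Example~\ref{C 210} and $f_{A}=f_{\widetilde{A}}$ with $A=\{e\}$, so $f_{A}(e)=U$ and $f_{A}(g)=\phi $ for $g\neq e$. Its nonempty $\alpha $-inclusions are $G$ and $\{e\}$, both normal subgroups, so $f_{A}\in NS_{G}(U)$ by Theorem~\ref{C 220} (directly: $xy=e\Leftrightarrow y=x^{-1}\Leftrightarrow yx=e$, so $f_{A}(xy)=f_{A}(yx)$ for all $x,y$). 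Yet $[v,u]=v^{-1}u^{-1}vu=vu^{2}vu=u^{2}\neq e$, so $f_{A}([v,u])=\phi \neq U=f_{A}(e)$. Your structural diagnosis is also correct: since $e_{f_{A}}\leq G$ by Theorem~\ref{B 100}, the condition $f_{A}([x,y])=f_{A}(e)$ for all $x,y\in G$ is equivalent to $e_{f_{A}}\supseteq \lbrack G,G]$, i.e.\ to $G/e_{f_{A}}$ being Abelian, which is strictly stronger than normality of $f_{A}$; the correct commutator characterization of normality is the inclusion $f_{A}([x,y])\supseteq f_{A}(x)$, which the paper itself proves as Theorem~\ref{C 110}. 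So only the implication you proved survives, and the statement should be weakened as you propose. Note the error propagates: Theorem~\ref{C 100}, whose proof invokes the false direction, fails for the same example, since there $G/e_{f_{A}}\cong D_{3}$ is not Abelian.
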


\begin{proof}
Let $f_{A}\left( \left[ x,y\right] \right) =f_{A}\left(
e\right) $ for all $x$,$y\in G.$ Then,%
\begin{eqnarray*}
f_{A}\left( xy\right)  &=&f_{A}\left( \left( yx\right) \left(
yx\right)
^{-1}\left( xy\right) \right)  \\
&\supseteq &f_{A}\left( yx\right) \cap f_{A}\left( x^{-1}y^{-1}xy\right)  \\
&=&f_{A}\left( yx\right) \cap f_{A}\left( e\right)  \\
&=&f_{A}\left( yx\right)
\end{eqnarray*}%
and%
\begin{eqnarray*}
f_{A}\left( yx\right)  &=&f_{A}\left( \left( xy\right) \left(
xy\right)
^{-1}\left( yx\right) \right)  \\
&\supseteq &f_{A}\left( xy\right) \cap f_{A}\left( y^{-1}x^{-1}yx\right)  \\
&=&f_{A}\left( xy\right) \cap f_{A}\left( e\right)  \\
&=&f_{A}\left( xy\right)
\end{eqnarray*}%
so $f_{A}\left( xy\right) =f_{A}\left( yx\right) $.

The converse is obvious.
\end{proof}


\begin{thrm}
\label{C 95} \cite{isaacs-09} Let $N\vartriangleleft G$. Then, $G/N$ is
Abelian if and only if $G%
{\acute{}}%
\subseteq N$, where $G%
{\acute{}}%
$ is the commutator subgroup of the group $G.$
\end{thrm}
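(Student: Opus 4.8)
The plan is to reduce the statement to the elementary observation that two cosets $xN$ and $yN$ commute in $G/N$ precisely when the commutator $[x,y]=x^{-1}y^{-1}xy$ lies in $N$. First I would recall that the commutator subgroup $G'$ is by definition the subgroup of $G$ generated by the set of all commutators $\{[x,y]:x,y\in G\}$, and that the normality of $N$ is exactly what is needed to make $G/N$ a group in the first place, so the quotient is meaningful on both sides of the equivalence.

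For the forward direction I would assume $G/N$ is Abelian. Then for arbitrary $x,y\in G$ we have $(xN)(yN)=(yN)(xN)$, that is $xyN=yxN$, which rearranges to $x^{-1}y^{-1}xy\in N$, i.e.\ $[x,y]\in N$. Thus $N$ contains every commutator. Since $N$ is a subgroup it is closed under products and inverses, so it contains the subgroup generated by the set of all commutators; hence $G'\subseteq N$.

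For the converse I would assume $G'\subseteq N$. Given any $x,y\in G$, the commutator $[x,y]$ lies in $G'\subseteq N$, so $x^{-1}y^{-1}xy\in N$, which gives $xyN=yxN$, i.e.\ $(xN)(yN)=(yN)(xN)$. As $x$ and $y$ were arbitrary, every pair of cosets commutes and $G/N$ is therefore Abelian.

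The single delicate point, and the only step that uses more than coset arithmetic, is the passage from \emph{every commutator lies in $N$} to \emph{$G'\subseteq N$}: this relies on $N$ being a subgroup, so that containing the generating set of $G'$ forces containing all of $G'$. Everything else is a direct translation between the commuting of cosets and the membership of commutators in $N$, so I do not expect any genuine obstacle beyond keeping the coset identities straight.
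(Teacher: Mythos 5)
Your proof is correct and complete. Note that the paper itself gives no proof of this statement: it is quoted from Isaacs' \emph{Algebra} as a known classical fact (and then used to prove Theorem \ref{C 100}), and your argument is exactly the standard textbook one --- translating commutativity of cosets into the condition $[x,y]\in N$, and then passing from ``$N$ contains every commutator'' to ``$G'\subseteq N$'' via the fact that $G'$ is \emph{generated} by the commutators, which you correctly flagged as the one step that uses the subgroup property of $N$ rather than mere coset arithmetic.
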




\begin{thrm}
\label{C 100} Let $f_{A}\in NS_{G}(U).$ Then, the quotient group $%
G/e_{f_{A}} $ is normal.
\end{thrm}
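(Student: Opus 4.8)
The plan is to reduce the statement to the commutator criterion already recorded in the excerpt, reading the conclusion ``$G/e_{f_{A}}$ is normal'' in the paper's sense of Definition~\ref{C 10}, namely that the quotient is Abelian. First I would observe that since $f_{A}\in NS_{G}(U)$, Theorem~\ref{C 90} applies and furnishes $f_{A}([x,y])=f_{A}(e)$ for all $x,y\in G$, where $[x,y]=x^{-1}y^{-1}xy$. This is the only place the normality (Abelian-ness) of the soft int-group is used, and it is exactly the input Theorem~\ref{C 90} was designed to supply.

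Next I would translate this value equality into membership in the $e$-set. By Definition~\ref{B 90}, the condition $f_{A}(z)=f_{A}(e)$ is precisely $z\in e_{f_{A}}$. Hence $[x,y]\in e_{f_{A}}$ for every pair $x,y\in G$, so every commutator of $G$ lies in $e_{f_{A}}$. Since the commutator subgroup $G'$ is generated by all commutators and $e_{f_{A}}$ is a subgroup of $G$ by Theorem~\ref{B 100}, closure under products and inverses forces $G'\subseteq e_{f_{A}}$.

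Then I would invoke the standard fact that any subgroup containing $G'$ is normal, so $e_{f_{A}}\vartriangleleft G$ and the quotient $G/e_{f_{A}}$ is a well-defined group. Finally, applying Theorem~\ref{C 95} with $N=e_{f_{A}}$ --- whose hypotheses $N\vartriangleleft G$ and $G'\subseteq N$ have both been verified --- yields that $G/e_{f_{A}}$ is Abelian, which is the asserted conclusion.

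The step I expect to be most delicate is the bridge between ``every commutator lies in $e_{f_{A}}$'' and the full applicability of Theorem~\ref{C 95}: one must first secure that $e_{f_{A}}$ is a subgroup (Theorem~\ref{B 100}), upgrade the containment of generators to $G'\subseteq e_{f_{A}}$, and then use the classical normality-of-subgroups-above-$G'$ result to justify that $e_{f_{A}}$ is normal so that the quotient and Theorem~\ref{C 95} are legitimate. Once these two preliminary points are in place, the remainder is a direct substitution.
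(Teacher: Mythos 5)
Your proof is correct and follows essentially the same route as the paper: both arguments reduce the claim to Theorem~\ref{C 95} by establishing that $e_{f_{A}}$ is a normal subgroup of $G$ containing the commutator subgroup $G'$, and then conclude that $G/e_{f_{A}}$ is Abelian. Your write-up is in fact more complete than the paper's, which merely asserts $G'\subseteq e_{f_{A}}$ and cites Corollary~\ref{C 30} for normality of $e_{f_{A}}$, whereas you derive the containment explicitly from Theorem~\ref{C 90}, Definition~\ref{B 90} and Theorem~\ref{B 100}, and obtain normality from the classical fact that any subgroup containing $G'$ is normal.
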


\begin{proof}
Assume that $f_{A}\in NS_{G}(U)$. Then we have $e_{f_{A}}\vartriangleleft G$%
\ by Corollary \ref{C 30} and $e_{f_{A}}$\ contains commutator subgroup $G%
{\acute{}}%
$ of $G$. So $G/e_{f_{A}}$ is Abelian by Theorem \ref{C 95}, so it
is normal.
\end{proof}


If $H$ and $K$ are two subgroups of a group $G$, then the set $\left[ H,K%
\right] $\ defined as $\left[ H,K\right] =\left\{ \left[ h,k\right] :h\in H%
\text{ and }k\in K\right\} $ is a subgroup of $G$. In addition, we know from
group theory that $H\vartriangleleft G$ if and only if $\left[ H,G\right]
\leq H$ \cite[p.169]{dummit-04}$.$ Analogues to this fact we write the
following theorem.


\begin{thrm}
\label{C 110} Let $A\subseteq G$. Then, a soft int-group $f_{A}$ is a normal
soft int-group if and only if
\begin{equation}
f_{A}\left( \left[ x,y\right] \right) \supseteq f_{A}(x)  \label{C 1100}
\end{equation}%
for all $x,y\in G.$
\end{thrm}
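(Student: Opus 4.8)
The plan is to establish the two implications separately, relying on Theorem~\ref{C 90} for the forward direction and on Corollary~\ref{C 15} for the converse, so that essentially no long computation is required.

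First I would treat the forward implication. Assuming $f_A$ is a normal soft int-group, Theorem~\ref{C 90} yields $f_A([x,y]) = f_A(e)$ for all $x,y \in G$. Since a normal soft int-group is in particular a soft int-group, Theorem~\ref{B 20} gives $f_A(e) \supseteq f_A(x)$ for every $x \in G$. Combining the two, $f_A([x,y]) = f_A(e) \supseteq f_A(x)$, which is precisely \eqref{C 1100}. This half is immediate once Theorem~\ref{C 90} is available.

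For the converse I would assume \eqref{C 1100} and aim to verify condition~(4) of Corollary~\ref{C 15}, namely $f_A(xyx^{-1}) \supseteq f_A(y)$ for all $x,y \in G$, since this is one of the listed characterizations of normality. The key step is the group identity $xyx^{-1} = y\,[y,x^{-1}]$, which one checks by expanding $[y,x^{-1}] = y^{-1}xyx^{-1}$. Applying the groupoid inequality of Definition~\ref{B 10} to the product $y\,[y,x^{-1}]$ gives $f_A(xyx^{-1}) \supseteq f_A(y) \cap f_A([y,x^{-1}])$. Now I relabel the hypothesis \eqref{C 1100}, reading its first argument as $y$ and its second as $x^{-1}$, to obtain $f_A([y,x^{-1}]) \supseteq f_A(y)$; hence the intersection collapses to $f_A(y)$ and $f_A(xyx^{-1}) \supseteq f_A(y)$ follows. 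By Corollary~\ref{C 15} this makes $f_A$ a normal soft int-group.

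The only delicate point I anticipate is choosing the commutator identity and the relabeling of \eqref{C 1100} so that the lower bound produced matches the factor already present: because the bound in \eqref{C 1100} is controlled by the \emph{first} commutator slot, I must arrange for $y$ to occupy that slot. Everything else is a routine application of the soft int-group axioms, so I expect no genuine obstacle beyond getting this bookkeeping right.
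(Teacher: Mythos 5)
Your converse is sound and is essentially the paper's own argument: the paper writes $f_A(x^{-1}yx)=f_A\bigl(y\,[y,x]\bigr)\supseteq f_A(y)\cap f_A([y,x])=f_A(y)$ and concludes by Corollary~\ref{C 15}; your computation is the same one with $x$ replaced by $x^{-1}$, and your care in putting $y$ into the first commutator slot is exactly the right bookkeeping.

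The forward direction, however, contains a genuine gap: you route it through Theorem~\ref{C 90}, i.e.\ through the claim that normality forces $f_A([x,y])=f_A(e)$ for all $x,y\in G$. That implication --- the direction of Theorem~\ref{C 90} that the paper declares ``obvious'' and never actually proves --- is false. Take $G$ to be the quaternion group $Q$ of Example~\ref{C 224} and set $f_Q(1)=U$, $f_Q(-1)=\alpha$, and $f_Q(x)=\beta$ on the six remaining elements, with $U\supset\alpha\supset\beta$. Every nonempty $\gamma$-inclusion of $f_Q$ is one of $\{1\}$, $\{1,-1\}$, $Q$, so $f_Q$ is a soft int-group by Theorem~\ref{B 367}, and it is normal because it is constant on the conjugate classes of $Q$ (Theorem~\ref{C 246}; alternatively, $Q$ is Hamiltonian, so Theorem~\ref{C 226} applies). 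Yet $[i,j]=i^{-1}j^{-1}ij=-1$, so $f_Q([i,j])=\alpha\neq U=f_Q(1)$. (Note that $f_Q([x,y])\supseteq f_Q(x)$ does hold here, as Theorem~\ref{C 110} requires; the inclusion form is strictly weaker than the equality form, which is precisely why Theorem~\ref{C 110} is true while that direction of Theorem~\ref{C 90} is not.) So no proof of the forward implication can pass through Theorem~\ref{C 90}; the paper instead computes directly, and that is the repair you need:
\begin{equation*}
f_A([x,y])=f_A\left(x^{-1}\cdot y^{-1}xy\right)\supseteq f_A(x^{-1})\cap f_A(y^{-1}xy)=f_A(x)\cap f_A(x)=f_A(x),
\end{equation*}
using the groupoid inequality of Definition~\ref{B 10}, the int-group identity $f_A(x^{-1})=f_A(x)$, and conjugation invariance (Corollary~\ref{C 15}). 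With your first paragraph replaced by this two-line computation, the proof is complete.
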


\begin{proof}
Assume that $f_{A}\in NS_{G}(U).$ Let $x$,$y\in G$%
\ then%
\begin{eqnarray*}
f_{A}\left( \left[ x,y\right] \right)  &=&f_{A}\left(
x^{-1}y^{-1}xy\right)
\\
&\supseteq &f_{A}\left( x^{-1}\right) \cap f_{A}\left( y^{-1}xy\right)  \\
&=&f_{A}\left( x\right) \cap f_{A}\left( x\right)  \\
&=&f_{A}\left( x\right) .
\end{eqnarray*}

Conversely, suppose that $f_{A}$\ satisfies (\ref{C 1100}). Then for all $x$,%
$y\in G$, we have

\begin{eqnarray*}
f_{A}\left( x^{-1}yx\right) &=&f_{A}\left( yy^{-1}x^{-1}yx\right) \\
&\supseteq &f_{A}\left( y\right) \cap f_{A}\left( \left[ y,x\right]
\right)
\\
&=&f_{A}\left( y\right)
\end{eqnarray*}%
by assumption, so $f_{A}$ is a normal soft int-group by Corollary \ref{C 15}
\end{proof}


Now, we give some properties of normal soft int-groups including $\alpha $%
-inclusion.

Let $f_{G}^{\alpha _{i}}$ be level subgroups of $G$, where $\alpha _{i}\in $%
Im($f_{G}$), for any $i\in I$ and $\alpha _{0}\supseteq \alpha _{1}\supseteq
\cdots \supseteq \alpha _{r}.$ We know that for any $\alpha _{i}$, $\alpha
_{j}$ in Im($f_{G}$),\ $\alpha _{i}\subseteq \alpha _{j}$ implies $%
f_{G}^{\alpha _{i}}\supseteq f_{G}^{\alpha _{j}}$ by Corollary \ref{B 300}$.$%
\ So any soft int-group on a finite group $G$ gives a chain with subgroups
of $G$ as;%
\begin{equation}
e_{f_{G}}=f_{G}^{\alpha _{0}}\subseteq f_{G}^{\alpha _{1}}\subseteq \cdots
\subseteq f_{G}^{\alpha _{r}}=G.  \label{C 1102}
\end{equation}

We denote this chain of level subgroups by $L_{f}(G)$. It is clear that, all
subgroups of a group $G$ need not form a chain. It follows that not all
subgroups are level subgroups of a soft int-group. 

\begin{definition}
Let $f_{G}\in S_{G}(U)$. Then, $f_{G}$ is called soft level normal subgroup
if and only if the number of the level subgroups are finite and $%
f_{G}^{\alpha _{i}}\vartriangleleft f_{G}^{\alpha _{i+1}}$ for each level
subgroups in $L_{f}(G)$.
\end{definition}



\begin{thrm}
\label{C 190} Let $G$ be a finite group. Then, any normal soft int-group in $%
G$ is a soft level normal subgroup of $G$.
\end{thrm}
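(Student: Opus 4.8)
The plan is to verify the two requirements in the definition of soft level normal subgroup separately: that the chain $L_f(G)$ is finite, and that each inclusion in it is a normal one. Throughout, let $f_G$ denote the given normal soft int-group, so that by Corollary \ref{C 15} it satisfies the conjugation identity $f_G(gxg^{-1}) = f_G(x)$ for all $g,x \in G$.

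First I would dispose of finiteness. Since $G$ is finite and $f_G$ has parameter set $G$, the image $\mathrm{Im}(f_G) = \{ f_G(x) : x \in G \}$ is a finite subset of $P(U)$, containing at most $|G|$ distinct values. By Theorem \ref{B 367} each such value $\alpha_i$ determines a level subgroup $f_G^{\alpha_i}$, so the chain (\ref{C 1102}) has only finitely many terms, and the first condition of the definition holds.

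The essential step is to show that every level subgroup is normal in all of $G$, not merely in the next link of the chain. Fix $\alpha \in \mathrm{Im}(f_G)$, take any $x \in f_G^{\alpha}$ (so that $f_G(x) \supseteq \alpha$) and any $g \in G$. The conjugation identity gives $f_G(gxg^{-1}) = f_G(x) \supseteq \alpha$, hence $gxg^{-1} \in f_G^{\alpha}$. Thus $g\, f_G^{\alpha}\, g^{-1} \subseteq f_G^{\alpha}$ for every $g \in G$, which is precisely $f_G^{\alpha} \vartriangleleft G$.

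From here the conclusion follows at once. For each consecutive pair in the chain we have $f_G^{\alpha_i} \subseteq f_G^{\alpha_{i+1}} \leq G$ with $f_G^{\alpha_i} \vartriangleleft G$, and a normal subgroup of $G$ contained in a subgroup $H$ is automatically normal in $H$ (every $h \in H \subseteq G$ conjugates $f_G^{\alpha_i}$ onto itself). Therefore $f_G^{\alpha_i} \vartriangleleft f_G^{\alpha_{i+1}}$ for all $i$, and combined with the finiteness already established this shows $f_G$ is a soft level normal subgroup of $G$. I anticipate no real obstacle here; the only point worth noting is that invoking Corollary \ref{C 15} to obtain normality of each level set in the whole group at once is cleaner than arguing about conjugation inside the intermediate subgroups $f_G^{\alpha_{i+1}}$ directly.
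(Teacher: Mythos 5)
Your proof is correct and follows essentially the same route as the paper's: both use the conjugation invariance $f_G(gxg^{-1})=f_G(x)$ from Corollary \ref{C 15} to show each level set $f_G^{\alpha_i}$ is closed under conjugation by elements of $G$, and hence normal in $f_G^{\alpha_{i+1}}$. Your version is just slightly more explicit, spelling out the finiteness of $\mathrm{Im}(f_G)$ and the step from normality in $G$ to normality in the intermediate subgroup, both of which the paper leaves implicit.
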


\begin{proof}
Assume that $f_{G}\in NS_{G}(U)$ and let
$f_{G}^{\alpha _{i}}$ be as in (\ref{C 1102}). So, for any $x\in
f_{G}^{\alpha _{i}}$,
\begin{equation}
f_{G}\left( y^{-1}xy\right) =f_{G}\left( x\right)  \label{C 1900}
\end{equation}%
for all $y\in G.$ Hence (\ref{C 1900})$\ $is true for all $y\in
f_{G}^{\alpha _{i+1}}$, as well. Consequently $f_{G}^{\alpha
_{i}}\vartriangleleft f_{G}^{\alpha _{i+1}}.$
\end{proof}


\begin{exam}
\label{C 200}Every soft int-group of an Abelian finite group is a soft level
normal subgroup.
\end{exam}


\begin{exam}
\label{C 210}A soft int-group of a finite group may not be a soft level
normal subgroup.

Let $A=\left\{ e,v\right\} \subset G=D_{3}$, where $D_{3}=%
\{e,u,u^{2},v,vu,vu^{2}\}$ is the Dihedral group such that $u^{3}=v^{2}=e$
and $uv=vu^{2}$. It is clear that $\left\{ e\right\} \leq A\leq D_{3}$ is a
chain of subgroups. It is easy to see that $f_{\widetilde{A}}$ is a soft
int-group in $D_{3}.$ However, since%
\begin{equation*}
\phi =f_{\widetilde{A}}(vu)=f_{\widetilde{A}}(vu\left( uu^{-1}\right) )=f_{%
\widetilde{A}}(vu^{2}u^{-1})=f_{\widetilde{A}}(uvu^{-1})\neq f_{\widetilde{A}%
}(v)=U,
\end{equation*}%
$f_{\widetilde{A}}$ is not a normal soft int-group, so it is not a soft
level normal subgroup of $G$.

But if we choose the subgroup chain $\left\{ e\right\} \leq \left\{
e,u,u^{2}\right\} \leq D_{3}$, we observe that all element of chain are soft
level normal subgroup.
\end{exam}



\begin{thrm}
\label{C 220}Let $f_{G}\in S(U)$ and $\alpha \in P(U)$. Then, $f_{G}\in
NS_{G}(U)$ if and only if $f_{G}^{\alpha }$ is a normal subgroup of $G,$
whenever it is nonempty.
\end{thrm}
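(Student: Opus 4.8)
The plan is to prove the biconditional in Theorem~\ref{C 220} by combining the already-established characterization of soft int-groups via $\alpha$-inclusions (Theorem~\ref{B 367}) with the normality criterion furnished by Corollary~\ref{C 15}, especially condition (3), namely $f_{G}(xyx^{-1})=f_{G}(y)$ for all $x,y\in G$.

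For the forward direction, I would assume $f_{G}\in NS_{G}(U)$ and fix $\alpha\in P(U)$ with $f_{G}^{\alpha}\neq\phi$. Since every normal soft int-group is in particular a soft int-group, Theorem~\ref{B 367} already gives that $f_{G}^{\alpha}\leq G$, so it only remains to verify normality of this subgroup. I would take an arbitrary $x\in G$ and $y\in f_{G}^{\alpha}$ (so $f_{G}(y)\supseteq\alpha$) and show $xyx^{-1}\in f_{G}^{\alpha}$. Writing $xyx^{-1}=(x)\,y\,(x)^{-1}$ and invoking Corollary~\ref{C 15}(3), we get $f_{G}(xyx^{-1})=f_{G}(y)\supseteq\alpha$, whence $xyx^{-1}\in f_{G}^{\alpha}$. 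This establishes $f_{G}^{\alpha}\vartriangleleft G$.

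For the converse, I would assume $f_{G}^{\alpha}$ is a normal subgroup of $G$ whenever nonempty. First, Theorem~\ref{B 367} immediately yields $f_{G}\in S_{G}(U)$, so $f_{G}$ is already a soft int-group; the task is to upgrade this to normality, i.e.\ to verify one of the equivalent conditions in Corollary~\ref{C 15}. I would aim for condition (4), $f_{G}(xyx^{-1})\supseteq f_{G}(y)$, since an inclusion is typically easier to extract from a level-set hypothesis than an equality. The standard device is to choose $\alpha=f_{G}(y)$ for fixed $x,y\in G$. Then $y\in f_{G}^{\alpha}$, this level set is nonempty, and normality gives $xyx^{-1}\in f_{G}^{\alpha}$, which unwinds to $f_{G}(xyx^{-1})\supseteq\alpha=f_{G}(y)$. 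Since $x,y$ were arbitrary, condition (4) of Corollary~\ref{C 15} holds, so $f_{G}\in NS_{G}(U)$.

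The main obstacle is conceptual rather than computational: one must recognize that the level-set hypothesis only delivers a one-sided inclusion for each fixed $\alpha$, so the proof hinges on the clever specialization $\alpha=f_{G}(y)$ to convert the set-membership statement $xyx^{-1}\in f_{G}^{\alpha}$ back into the pointwise inequality, and on the fact (from Corollary~\ref{C 15}) that the single inclusion $f_{G}(xyx^{-1})\supseteq f_{G}(y)$ already forces the full equality and hence normality. A minor point to handle carefully is the nonemptiness proviso: in the forward direction one relies on the hypothesis already assuming $f_{G}^{\alpha}\neq\phi$, while in the converse one must explicitly observe that the chosen $\alpha=f_{G}(y)$ produces a nonempty level set (it contains $y$) so that the normality hypothesis is actually applicable.
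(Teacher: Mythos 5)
Your proposal is correct and follows essentially the same route as the paper's own proof: the forward direction uses Theorem~\ref{B 367} plus the constancy on conjugacy classes (Corollary~\ref{C 15}) to show $xyx^{-1}\in f_{G}^{\alpha}$, and the converse uses Theorem~\ref{B 367} together with the specialization $\alpha=f_{G}(y)$ to obtain $f_{G}(xyx^{-1})\supseteq f_{G}(y)$ and then invokes Corollary~\ref{C 15}. The only difference is presentational: you make explicit the nonemptiness check and the one-sided-inclusion point that the paper leaves implicit.
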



\begin{proof}
By Theorem \ref{B 367} we know that if $f_{G}\in S_{G}(U)$ then $%
f_{G}^{\alpha }\leq G$ for any nonempty $f_{G}^{\alpha }$. Now we
assume that $f_{G}\in NS_{G}(U).$\ Let $x\in G$, then for any $y\in
f_{G}^{\alpha }$ we have $f_{G}(xyx^{-1})=f_{G}(y)\supseteq \alpha
.$

Thus, $xyx^{-1}\in f_{G}^{\alpha }$ and hence $f_{G}^{\alpha
}\vartriangleleft G.$

Conversely, by Theorem \ref{B 367} we know that if
$f_{G}^{\alpha }\leq G$, for any nonempty $f_{G}^{\alpha }$, then
$f_{G}\in S_{G}(U).$ Now, suppose $f_{G}^{\alpha }\vartriangleleft
G.$\ Let $x$,$y\in G$ and $\ \alpha
=f_{G}(y).$ Then $y\in f_{G}^{\alpha }$ and so $xyx^{-1}\in f_{G}^{\alpha }$%
, since $f_{G}^{\alpha }$ is normal subgroup of $G$.

Hence $f_{G}(xyx^{-1})\supseteq \alpha =f_{G}(y)$ and so $f_{G}\in
NS_{G}(U)$ by Corollary \ref{C 15}.
\end{proof}


\begin{cor}
\label{C 221}Let $f_{G}\in NS_{G}(U).$\ Then, $e$-set ($e_{f_{G}}$) and
support of $f_{G}$\ ($f_{G}^{\ast }$) are normal subgroups of $G$.
\end{cor}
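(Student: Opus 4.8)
The plan is to derive both claims from Theorem~\ref{C 220}, which characterizes normal soft int-groups via the normality of their $\alpha$-inclusions. The key observation is that both the $e$-set $e_{f_{G}}$ and the support $f_{G}^{\ast}$ can be realized as $\alpha$-inclusions $f_{G}^{\alpha}$ for suitable choices of $\alpha\in P(U)$, so their normality will follow immediately once that identification is made.

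First I would identify the $e$-set. By Definition~\ref{B 90}, $e_{f_{G}}=\{x\in G:f_{G}(x)=f_{G}(e)\}$. Since $f_{G}(e)\supseteq f_{G}(x)$ for all $x\in G$ by Theorem~\ref{B 20}, the condition $f_{G}(x)=f_{G}(e)$ is equivalent to $f_{G}(x)\supseteq f_{G}(e)$. Setting $\alpha=f_{G}(e)$, Definition~\ref{B 290} gives
\begin{equation*}
f_{G}^{\alpha}=\{x\in G:f_{G}(x)\supseteq f_{G}(e)\}=e_{f_{G}},
\end{equation*}
so $e_{f_{G}}$ is exactly the $\alpha$-inclusion at $\alpha=f_{G}(e)$. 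Since $e\in e_{f_{G}}$, this set is nonempty, and because $f_{G}\in NS_{G}(U)$, Theorem~\ref{C 220} forces $f_{G}^{\alpha}=e_{f_{G}}\vartriangleleft G$.

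Next I would treat the support. By the remark following Definition~\ref{B 290}, the support is $f_{G}^{\ast}=\{x\in A:f_{G}(x)\neq\phi\}$, which is precisely the $\alpha$-inclusion $f_{G}^{\alpha}$ at $\alpha=\phi$: indeed $f_{G}(x)\supseteq\phi$ always holds, but one must be slightly careful, since the $\phi$-inclusion literally contains every element whereas the support excludes points sent to $\phi$; the paper's own convention identifies $f_{G}^{\ast}$ with the nonempty-value set, so I would take $\alpha$ to be any minimal nonempty value or simply invoke the stated support identity and apply Theorem~\ref{C 220} to conclude $f_{G}^{\ast}\vartriangleleft G$ whenever it is nonempty (which it is, containing $e$).

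The main obstacle is the degenerate role of $\alpha=\phi$ in the support case: the raw $\phi$-inclusion is all of $G$ rather than the support, so the cleanest route is to observe that on the image set $\mathrm{Im}(f_{G})$ the nonempty values have a least element (or to argue directly that the support is a union/intersection of normal level subgroups and hence normal). For the $e$-set the argument is entirely routine once the identification $e_{f_{G}}=f_{G}^{f_{G}(e)}$ is in place via Theorem~\ref{B 20}; no real difficulty remains there.
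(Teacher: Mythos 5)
Your treatment of the $e$-set is correct and is exactly the argument the paper intends: by Theorem \ref{B 20}, $f_{G}(x)=f_{G}(e)$ if and only if $f_{G}(x)\supseteq f_{G}(e)$, so $e_{f_{G}}=f_{G}^{\alpha }$ with $\alpha =f_{G}(e)$, a nonempty $\alpha $-inclusion (it contains $e$), and Theorem \ref{C 220} gives $e_{f_{G}}\vartriangleleft G$. The paper's entire proof is the sentence ``Direct by Theorem \ref{C 220}'', and your identification is precisely what that sentence suppresses.

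For the support, however, there is a genuine gap, and you put your finger on it without resolving it. As you note, $f_{G}^{\ast }=\{x\in G:f_{G}(x)\neq \phi \}$ is the \emph{strong} $\phi $-inclusion, not an $\alpha $-inclusion (the ordinary $\phi $-inclusion is all of $G$), so Theorem \ref{C 220} does not apply to it. Both of your proposed repairs fail: (i) the nonempty values in $\mathrm{Im}(f_{G})$ need not have a least element, since $P(U)$ is only partially ordered by inclusion; (ii) $f_{G}^{\ast }$ is a \emph{union} of level subgroups, and a union of (normal) subgroups need not be a subgroup at all. In fact no repair is possible, because the support claim is false in general. Take $G=\{e,a,b,ab\}$ the Klein four-group, $U=\{1,2\}$, and set $f_{G}(e)=U$, $f_{G}(a)=\{1\}$, $f_{G}(b)=\{2\}$, $f_{G}(ab)=\phi $. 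The nonempty $\alpha $-inclusions are $G$, $\{e,a\}$, $\{e,b\}$, $\{e\}$, all subgroups, so $f_{G}\in S_{G}(U)$ by Theorem \ref{B 367}, and $f_{G}\in NS_{G}(U)$ by Corollary \ref{C 20} since $G$ is Abelian; yet $f_{G}^{\ast }=\{e,a,b\}$ is not a subgroup, since $ab\notin f_{G}^{\ast }$. The point of failure is exactly the one your chain intuition imports from fuzzy group theory: there $\min (\mu (x),\mu (y))>0$ whenever $\mu (x),\mu (y)>0$, but here $f_{G}(x)\cap f_{G}(y)$ can equal $\phi $ with both values nonempty. (The paper's own one-line proof silently commits the same error; the support statement of Corollary \ref{C 221} needs an additional hypothesis --- e.g.\ that $\mathrm{Im}(f_{G})$ is a chain, or that the nonempty values of $f_{G}$ have a common nonempty lower bound --- under which your least-value argument does go through.)
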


\begin{proof}
Direct by Theorem \ref{C 220}.
\end{proof}

In group theory, a Dedekind group is a group $G$ such that every subgroup of
$G$ is normal. All Abelian groups are Dedekind groups. A non-Abelian
Dedekind group is called a Hamiltonian group.


\begin{exam}
\label{C 224}The quaternion group defined as;
\begin{equation*}
Q=\left\{ -1,i,j,k:\left( -1\right) ^{2}=1,\text{ }i^{2}=j^{2}=k^{2}=ijk=-1%
\right\}
\end{equation*}%
is\ Hamiltonian, where\ 1 is the identity element and -1 commutes with the
other elements of the group.
\end{exam}


\begin{thrm}
\label{C 226}$G$ is a Dedekind group if and only if every soft int-group in $%
G$ is a normal soft int-group.
\end{thrm}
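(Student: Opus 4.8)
The plan is to prove the biconditional directly, handling each direction separately and leaning on the characterization established in Theorem \ref{C 220}, which connects normal soft int-groups to normal subgroups via $\alpha$-inclusions. The key observation is that Theorem \ref{C 220} already says $f_G \in NS_G(U)$ if and only if every nonempty level set $f_G^{\alpha}$ is a \emph{normal} subgroup of $G$, while Theorem \ref{B 367} says $f_G \in S_G(U)$ if and only if every nonempty $f_G^{\alpha}$ is a subgroup of $G$. A Dedekind group is precisely one in which every subgroup is normal, so the level-set machinery is exactly the bridge between the two sides of the claimed equivalence.

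For the forward direction, I would assume $G$ is Dedekind and take an arbitrary $f_G \in S_G(U)$. By Theorem \ref{B 367}, each nonempty $f_G^{\alpha}$ is a subgroup of $G$; since $G$ is Dedekind, every such subgroup is automatically normal in $G$. Then Theorem \ref{C 220} immediately upgrades $f_G$ to a normal soft int-group, so every soft int-group in $G$ is normal. For the converse, I would assume every soft int-group in $G$ is a normal soft int-group and aim to show an arbitrary subgroup $H \leq G$ is normal. The natural device is to build a soft int-group whose level set picks out $H$: take the characteristic (or $H$-universal) soft set $f_{\widetilde{H}}$, which by the definitions in the Preliminaries is a soft int-group, and observe that its nonempty top level set equals $H$.

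The main obstacle I anticipate is the converse direction, specifically making the soft int-group $f_{\widetilde{H}}$ genuinely realize $H$ as a level set while remaining a bona fide soft int-group over the parameter group $G$ (not just over $H$). Concretely, one wants a soft set on all of $G$ whose $\alpha$-inclusion at $\alpha = U$ is exactly $H$; the $H$-universal soft set $f_{\widetilde{H}}$ assigns $U$ on $H$ and $\phi$ off $H$, so $f_{\widetilde{H}}^{\,U} = H$, and one checks the soft int-group inequality $f_{\widetilde{H}}(xy) \supseteq f_{\widetilde{H}}(x) \cap f_{\widetilde{H}}(y)$ together with the inverse condition directly from $H \leq G$. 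By hypothesis $f_{\widetilde{H}} \in NS_G(U)$, so Theorem \ref{C 220} forces its level set $f_{\widetilde{H}}^{\,U} = H$ to be normal in $G$. Since $H$ was arbitrary, every subgroup of $G$ is normal, i.e.\ $G$ is Dedekind.

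In short, both directions reduce to translating between subgroups and level subgroups via Theorems \ref{B 367} and \ref{C 220}, with the only real content being the explicit realization of an arbitrary subgroup as a level set in the converse. I would write the forward direction as a one-line application of the two cited theorems, and spend the bulk of the argument verifying that $f_{\widetilde{H}}$ is a soft int-group with $f_{\widetilde{H}}^{\,U} = H$, after which normality of $H$ follows formally.
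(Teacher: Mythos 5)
Your proof is correct and follows essentially the same route as the paper: both directions translate between soft int-groups and normal subgroups via the level-subgroup machinery of Theorems \ref{B 367} and \ref{C 220}. If anything, your version is tighter --- the paper's forward direction re-derives at the element level (comparing $\alpha_1 = f_A(y)$ and $\alpha_2 = f_A(xyx^{-1})$) what your direct citation of Theorem \ref{C 220} gives at once, and your explicit realization of an arbitrary $H \leq G$ as the top level set $f_{\widetilde{H}}^{\,U}$ of the characteristic soft set supplies precisely the detail that the paper's converse only gestures at when it says any subgroup ``can be regarded as a level subgroup of some soft int-group.''
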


\begin{proof}
Let $G$ be a Dedekind group and $f_{A}\in S_{G}(U).$ We need to
show $f_{A}(y)=f_{A}(xyx^{-1})$ for all $y\in A$ and $x\in G.$ Let $%
f_{A}(y)=\alpha _{1}$ and $f_{A}(xyx^{-1})=\alpha _{2}.$ Then
$f_{A}^{\alpha _{1}}$ and $f_{A}^{\alpha _{2}}$\ are subgroups of
$G$ by Theorem \ref{B 367} and so are normal subgroups of $G$, since
$G$ is a Dedekind group. If $y\in f_{A}^{\alpha _{1}}$ then
$xyx^{-1}\in f_{A}^{\alpha _{1}}$ so $\alpha
_{2}=f_{A}(xyx^{-1})\supseteq \alpha _{1}.$ On the other hand
$xyx^{-1}\in f_{A}^{\alpha _2}$ and since $f_{A}^{\alpha _{2}}$ is
normal, $y\in
f_{A}^{\alpha _2}$ so $\alpha _{1}=f_{A}(y)\supseteq \alpha _{2}.$ Thus $%
\alpha _{1}=\alpha _{2}.$

Conversely, suppose that every soft int-group in $G$ is a normal
soft int-group. Then, by Theorem \ref{C 220} any subgroup $A$ of a
group $G$ can be regarded as a level subgroup of some soft int-group
$f_{A}$ in $G.$ Since every soft int-group in $G$ is a normal soft
int-group, then $A$ is normal subgroup of $G.$ Thus $G$ is a
Dedekind group.
\end{proof}


\begin{lem}
\label{C 227} Let $A$,$B\subseteq G.$ If $f_{A}\in NS_{G}(U) $, then for any
soft set $f_{B}$ in $G$, $f_{A}\ast f_{B}=f_{B}\ast f_{A}$.
\end{lem}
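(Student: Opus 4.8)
The plan is to prove that for a normal soft int-group $f_{A}$ and an arbitrary soft set $f_{B}$, the soft products commute, i.e.\ $(f_{A}\ast f_{B})(x)=(f_{B}\ast f_{A})(x)$ for every $x\in G$. I would work directly from the definition of soft product in Definition \ref{B 370}, namely $(f_{A}\ast f_{B})(x)=\bigcup\{f_{A}(u)\cap f_{B}(v):uv=x,\ u,v\in G\}$, and show the two unions are equal by exhibiting a bijective correspondence between the index sets that preserves the value $f_{A}(\cdot)\cap f_{B}(\cdot)$.

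First I would fix $x\in G$ and take a typical term of $(f_{A}\ast f_{B})(x)$, coming from a factorization $uv=x$; this contributes $f_{A}(u)\cap f_{B}(v)$. The goal is to match it with a term of $(f_{B}\ast f_{A})(x)$, which ranges over factorizations $v'u'=x$ contributing $f_{B}(v')\cap f_{A}(u')$. The natural reparametrization is to set $u'=v$ and $v'=vuv^{-1}$, so that $v'u'=(vuv^{-1})v=vu=$ a rearrangement; I would instead use the cleaner substitution that sends the pair $(u,v)$ with $uv=x$ to the pair $(v,\,v^{-1}xv)=(v,\,v^{-1}uv\cdot v^{-1}\cdot v)$, checking that it again multiplies to $x$ in the order required for $f_{B}\ast f_{A}$. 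The point of the commutator hypothesis is exactly that the value is preserved under this swap.

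The main step is the value computation: I must show $f_{A}(u)\cap f_{B}(v)=f_{A}(u')\cap f_{B}(v')$ for the matched term. Here the normality of $f_{A}$ is the key tool. By Corollary \ref{C 15}, $f_{A}$ is constant on conjugacy classes, so $f_{A}(u)=f_{A}(w u w^{-1})$ for every $w\in G$; applying this with the appropriate conjugating element makes the $f_{A}$-factor invariant under the reindexing, while the $f_{B}$-factor is simply carried along unchanged because the substitution permutes the $v$-component within the same set of group elements. Thus each term on one side equals a term on the other side, giving $\widetilde{\subseteq}$ in one direction; by the symmetry of the argument (or by running it with $f_{A}^{-1}=f_{A}$, which holds since $f_{A}\in S_{G}(U)$), I obtain the reverse inclusion and hence equality.

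The hard part will be bookkeeping the substitution so that the conjugation lands on the $A$-argument rather than the $B$-argument, since $f_{B}$ is only an arbitrary soft set and enjoys no invariance of its own. Concretely, I expect the correct pairing to be $uv=x\mapsto (u,v)\rightsquigarrow (\,v,\,v^{-1}uv\,v^{-1}\cdots)$, and I would double-check that the conjugating element acts only inside $f_{A}$; everything else is routine manipulation of the defining union. Once the term-by-term equality is established, the conclusion $f_{A}\ast f_{B}=f_{B}\ast f_{A}$ follows immediately, completing the proof.
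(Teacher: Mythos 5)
Your overall strategy is exactly the paper's: reindex the defining union from Definition \ref{B 370} so that the conjugation falls on the $A$-argument, where normality (Corollary \ref{C 15}) makes $f_{A}$ invariant, while the $B$-argument is carried along literally unchanged. The paper implements this by parametrizing each factorization $uv=x$ by $v$ alone (so $u=xv^{-1}$) and invoking the Abelian property $f_{A}(xv^{-1})=f_{A}(v^{-1}x)$, which gives
\begin{equation*}
(f_{A}\ast f_{B})(x)=\bigcup\left\{ f_{A}(xv^{-1})\cap f_{B}(v):v\in G\right\}
=\bigcup\left\{ f_{B}(v)\cap f_{A}(v^{-1}x):v\in G\right\} =(f_{B}\ast f_{A})(x).
\end{equation*}

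However, both explicit substitutions you actually write down are wrong: the pair $(v',u')=(vuv^{-1},v)$ has product $v'u'=vu$, which is not $x$ in general, and the pair $(v,\,v^{-1}xv)$ has product $v\cdot v^{-1}xv=xv$, again not $x$. The reindexing that your own guiding principle (``conjugation lands only inside $f_{A}$'') dictates is $(u,v)\mapsto (v,\,v^{-1}uv)$: indeed $v\cdot(v^{-1}uv)=uv=x$, and since $v^{-1}uv=v^{-1}x$, the matched term is $f_{B}(v)\cap f_{A}(v^{-1}uv)=f_{B}(v)\cap f_{A}(u)$ by Corollary \ref{C 15}. Moreover this map is a bijection of the set of ordered factorizations of $x$ onto itself (its inverse is $(a,b)\mapsto (aba^{-1},a)$), so the two unions coincide term by term and equality follows in one stroke; the separate reverse-inclusion step via symmetry, and in particular the appeal to $f_{A}^{-1}=f_{A}$, is unnecessary. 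With this repair your argument is precisely the proof given in the paper.
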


\begin{proof}
For all $x\in G$, we have%
\begin{equation*}
\left( f_{A}\ast f_{B}\right) \left( x\right) =\bigcup \left\{
f_{A}(u)\cap f_{B}(v):uv=x,\text{ }u,v\in G\right\}
\end{equation*}%
and since $f_{A}$ is normal soft int-group and $uv=x$ implies $u=xv^{-1}$, then%
\begin{eqnarray*}
\left( f_{A}\ast f_{B}\right) \left( x\right)  &=&\bigcup \left\{
f_{A}(xv^{-1})\cap f_{B}(v):(xv^{-1})v=x,\text{ }v\in G\right\}  \\
&=&\bigcup \left\{ f_{B}(v)\cap f_{A}(v^{-1}x):v(v^{-1}x)=x,\text{
}v\in
G\right\}  \\
&=&\left( f_{B}\ast f_{A}\right) \left( x\right) .
\end{eqnarray*}
\end{proof}


\begin{thrm}
\label{C 228}If $f_{A}\in NS_{G}(U)$ and $f_{B}\in S_{G}(U)$, then $\left(
f_{A}\ast f_{B}\right) \in S_{G}(U)$.
\end{thrm}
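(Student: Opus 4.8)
The plan is to reduce the claim to the commutativity criterion for soft products, which is precisely Theorem \ref{B 490}: for $f_A, f_B \in S_G(U)$, the soft product $f_A \ast f_B$ lies in $S_G(U)$ if and only if $f_A \ast f_B = f_B \ast f_A$. So the whole task amounts to checking the two hypotheses of that theorem, namely that both factors are soft int-groups and that they commute under $\ast$.

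First I would note that both factors do lie in $S_G(U)$. A normal soft int-group is by definition in particular a soft int-group, so $f_A \in NS_G(U)$ gives $f_A \in S_G(U)$, while $f_B \in S_G(U)$ is a soft int-group by hypothesis. This makes Theorem \ref{B 490} applicable as soon as commutativity is in hand.

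Second, and this is the only substantive input, I would invoke Lemma \ref{C 227}: because $f_A$ is a normal soft int-group, its soft product with any soft set over $G$ (in particular with $f_B$) commutes, that is $f_A \ast f_B = f_B \ast f_A$. Feeding this equality together with the previous observation into Theorem \ref{B 490} yields $f_A \ast f_B \in S_G(U)$ at once.

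The apparent obstacle, establishing the commutativity $f_A \ast f_B = f_B \ast f_A$, has in fact already been handled in Lemma \ref{C 227}, where the normality (Abelian-ness) of $f_A$ is exploited to substitute $u = x v^{-1}$ and slide $f_A$ past $f_B$ inside the union defining the soft product. Given that lemma and the characterisation of Theorem \ref{B 490}, no further computation is required, so the proof is simply the composition of these two cited results.
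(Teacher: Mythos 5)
Your proof is correct, but it takes a genuinely different route from the paper's. The paper argues directly from the characterization in Theorem \ref{B 480}: it first shows $\left( f_{A}\ast f_{B}\right) \ast \left( f_{A}\ast f_{B}\right) \widetilde{\subseteq }f_{A}\ast f_{B}$ by using associativity (Theorem \ref{B 400}) and Lemma \ref{C 227} to rewrite the fourfold product as $\left( f_{A}\ast f_{A}\right) \ast \left( f_{B}\ast f_{B}\right) $, and then computes $\left( f_{A}\ast f_{B}\right) \left( x^{-1}\right) =\left( f_{B}\ast f_{A}\right) \left( x\right) =\left( f_{A}\ast f_{B}\right) \left( x\right) $ pointwise to obtain the inverse condition. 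You instead reduce everything to the packaged commutativity criterion of Theorem \ref{B 490}: both factors are soft int-groups ($f_{A}\in NS_{G}(U)$ in particular gives $f_{A}\in S_{G}(U)$), and Lemma \ref{C 227} supplies $f_{A}\ast f_{B}=f_{B}\ast f_{A}$, so Theorem \ref{B 490} yields the conclusion at once. Both arguments hinge on Lemma \ref{C 227} as the sole substantive input; yours is shorter and purely citational, while the paper's is computational and self-contained modulo Theorem \ref{B 480}, in effect re-deriving the relevant direction of Theorem \ref{B 490} in this special case. There is no circularity in your appeal to Theorem \ref{B 490}, since it is an earlier result from \cite{kay-11} whose proof does not depend on the present theorem.
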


\begin{proof}
Firstly,
\begin{eqnarray*}
\left( f_{A}\ast f_{B}\right) \ast \left( f_{A}\ast f_{B}\right)
&=&f_{A}\ast \left( f_{B}\ast f_{A}\right) \ast f_{B} \\
&=&f_{A}\ast \left( f_{A}\ast f_{B}\right) \ast f_{B}\text{ (by
Lemma \ref{C
227})} \\
&=&\left( f_{A}\ast f_{A}\right) \ast \left( f_{B}\ast f_{B}\right)  \\
&\subseteq &f_{A}\ast f_{B}\text{ \ \ \ \ \ \ \ \ \ (by Theorem
\ref{B 480})}
\end{eqnarray*}%
Secondly, for all $x\in G$%
\begin{eqnarray*}
\left( f_{A}\ast f_{B}\right) \left( x^{-1}\right)  &=&\bigcup
\left\{
f_{A}(u)\cap f_{B}(v):uv=x^{-1},\text{ }u,v\in G\right\}  \\
&=&\bigcup \left\{ f_{A}\left( (u^{-1})^{-1}\right) \cap f_{B}\left(
(v^{-1})^{-1}\right) :v^{-1}u^{-1}=x,\text{ }u,v\in G\right\}  \\
&=&\bigcup \left\{ f_{B}(v^{-1})\cap f_{A}(u^{-1}):v^{-1}u^{-1}=x,\text{ }%
u,v\in G\right\}  \\
&=&\left( f_{B}\ast f_{A}\right) \left( x\right)  \\
&=&\left( f_{A}\ast f_{B}\right) \left( x\right) \text{ ( by Lemma
\ref{C 227}).}
\end{eqnarray*}

Hence $\left( f_{A}\ast f_{B}\right) \in S_{G}(U)$ by Theorem \ref{B
480}.
\end{proof}


\begin{cor}
\label{C 229}If\bigskip\ $f_{A}$,$f_{B}\in NS_{G}(U)$, then $\left(
f_{A}\ast f_{B}\right) \in NS_{G}(U).$
\end{cor}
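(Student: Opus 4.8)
The plan is to proceed in two stages: first establish that $f_{A}\ast f_{B}$ is a soft int-group, and only afterwards verify that it is in fact normal. For the first stage, note that $f_{A}\in NS_{G}(U)\subseteq S_{G}(U)$ and $f_{B}\in NS_{G}(U)\subseteq S_{G}(U)$, so Theorem \ref{C 228} applies directly (the normal factor plays the role of its hypothesis on $f_{A}$), giving $f_{A}\ast f_{B}\in S_{G}(U)$. It then remains only to show that $f_{A}\ast f_{B}$ is an Abelian soft set. Rather than attack $(f_{A}\ast f_{B})(xy)=(f_{A}\ast f_{B})(yx)$ head on, I would use the conjugation form of normality supplied by Corollary \ref{C 15}, that is, I would prove
\begin{equation*}
(f_{A}\ast f_{B})(xyx^{-1})=(f_{A}\ast f_{B})(y)\text{ for all }x,y\in G.
\end{equation*}

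For the key computation I would expand the left-hand side by Definition \ref{B 370}:
\begin{equation*}
(f_{A}\ast f_{B})(xyx^{-1})=\bigcup \left\{ f_{A}(u)\cap f_{B}(v):uv=xyx^{-1},\ u,v\in G\right\}.
\end{equation*}
The crucial move is the reindexing $u=xax^{-1}$, $v=xbx^{-1}$, equivalently $a=x^{-1}ux$, $b=x^{-1}vx$. Then I would invoke the normality of each factor separately: by Corollary \ref{C 15} applied to $f_{A}$ and to $f_{B}$ we have $f_{A}(xax^{-1})=f_{A}(a)$ and $f_{B}(xbx^{-1})=f_{B}(b)$. Substituting these back converts the union over pairs with product $xyx^{-1}$ into the union over pairs with product $y$, which is precisely $(f_{A}\ast f_{B})(y)$. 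Hence $f_{A}\ast f_{B}$ satisfies the conjugation criterion, so it is a normal soft int-group and $f_{A}\ast f_{B}\in NS_{G}(U)$, as claimed. (Lemma \ref{C 227} is not strictly needed here, though it could be used to reorganize the computation.)

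The step I expect to be the main obstacle is justifying the reindexing rigorously: one must check that the map $(u,v)\mapsto (x^{-1}ux,x^{-1}vx)$ is a genuine bijection from $\{(u,v):uv=xyx^{-1}\}$ onto $\{(a,b):ab=y\}$, so that the two unions run over index sets in one-to-one correspondence and may be identified term by term. Since conjugation by $x$ is a group automorphism of $G$, this is routine, but it is the precise point where the full strength of both hypotheses enters — normality of $f_{A}$ governs the $u$-coordinate and normality of $f_{B}$ governs the $v$-coordinate — which is exactly what distinguishes this statement from the weaker conclusion of Theorem \ref{C 228}.
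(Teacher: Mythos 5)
Your proposal is correct and follows essentially the same route as the paper: both first invoke Theorem \ref{C 228} to get $f_{A}\ast f_{B}\in S_{G}(U)$, and then verify the conjugation criterion of Corollary \ref{C 15} by reindexing the union in Definition \ref{B 370} via $(u,v)\mapsto(x^{-1}ux,\,x^{-1}vx)$ and using normality of each factor pointwise. Your explicit justification of the reindexing as a bijection induced by the conjugation automorphism is a more careful write-up of a step the paper leaves implicit, but it is the same argument.
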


\begin{proof}
$\left( \bigskip f_{A}\ast f_{B}\right) \in S_{G}(U)$ by Theorem \ref{C 228}%
, so we should verify that $f_{A}\ast f_{B}$ is a normal
soft int-group. For any $%
x\in G$, we have%
\begin{eqnarray*}
\left( f_{A}\ast f_{B}\right) \left( x\right)  &=&\bigcup \left\{
f_{A}(u)\cap f_{B}(v):uv=x,\text{ }u,v\in G\right\}  \\
&=&\bigcup \left\{ f_{A}(w^{-1}uw)\cap f_{B}(w^{-1}vw):\left(
w^{-1}uw\right) \left( w^{-1}vw\right) =w^{-1}xw,\text{ }u,v\in G\right\}  \\
&=&\left( f_{A}\ast f_{B}\right) \left( w^{-1}xw\right)
\end{eqnarray*}%
for all $w$,$x\in G.$ Hence $\left( f_{A}\ast f_{B}\right) \in
NS_{G}(U)$.
\end{proof}


\begin{remark}
\label{C 240}$\left( NS_{G}(U),\ast \right) $ is a commutative idempotent
semigroup, because

\begin{enumerate}
\item $NS_{G}(U)$ is closed under operation $"\ast "$, (by Corollary \ref{C
229})

\item $\left( NS_{G}(U),\ast \right) $ is commutative, (by Theorem \ref{B
490})

\item $\left( NS_{G}(U),\ast \right) $ is associative, (by Theorem \ref{B
400})

\item $\left( NS_{G}(U),\ast \right) $ is idempotent, i.e., $f_{G}\ast
f_{G}=f_{G}$.
\end{enumerate}
\end{remark}


\begin{definition}
\label{C 243}Let $f_{A}$,$f_{B}\in S_{G}(U).$ Then, $f_{A}$ and $f_{B}$ are
called conjugate soft int-groups (with respect to $u$), if there exists $%
u\in G$ such that, $f_{A}(x)=f_{B}(uxu^{-1})$, for all $x\in G$ and we
denote $f_{A}=f_{B^{u}}$, where $f_{B^{u}}(x)=f_{B}(uxu^{-1})$, for all $%
x\in G.$
\end{definition}


\begin{thrm}
\label{C 246}A soft int-group $f_{A}$ is a normal soft int-group in $G$ if
and only if $f_{A}$ is constant on each conjugate class of $G$, that is, $%
f_{A^{u}}=f_{A}$ for all $u\in G.$
\end{thrm}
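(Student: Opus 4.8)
The plan is to prove the biconditional by unwinding both definitions into the same statement about the values of $f_A$ on conjugate classes. Recall that $f_{A^u}(x)=f_A(uxu^{-1})$ by Definition~\ref{C 243}, so the condition $f_{A^u}=f_A$ for all $u\in G$ means precisely that $f_A(uxu^{-1})=f_A(x)$ for all $u,x\in G$. The entire proof therefore reduces to recognizing that this equality is one of the equivalent characterizations of normality already collected in Corollary~\ref{C 15}. I would state this reduction explicitly as the organizing idea, then supply the two easy directions.

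For the forward direction I would assume $f_A\in NS_G(U)$. By Corollary~\ref{C 15} (condition~3), $f_A$ being a normal soft int-group is equivalent to $f_A(xyx^{-1})=f_A(y)$ for all $x,y\in G$. Fixing an arbitrary $u\in G$ and an arbitrary $x\in G$, I apply this with the conjugating element $u$ to get $f_A(uxu^{-1})=f_A(x)$. By Definition~\ref{C 243} the left-hand side is exactly $f_{A^u}(x)$, so $f_{A^u}(x)=f_A(x)$ for every $x$, i.e.\ $f_{A^u}=f_A$. Since $u$ was arbitrary, $f_A$ is constant on each conjugate class of $G$.

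For the converse I would assume $f_{A^u}=f_A$ for all $u\in G$. Evaluating both sides at an arbitrary $x\in G$ and again using $f_{A^u}(x)=f_A(uxu^{-1})$ from Definition~\ref{C 243}, I obtain $f_A(uxu^{-1})=f_A(x)$ for all $u,x\in G$. This is condition~3 of Corollary~\ref{C 15} (after the harmless renaming $u\mapsto x$, $x\mapsto y$), which is equivalent to $f_A$ being a normal soft int-group. Hence $f_A\in NS_G(U)$, completing the argument.

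I do not expect a genuine obstacle here, since both directions are essentially a translation between the notation of conjugate soft int-groups and the conjugation-invariance condition already proved equivalent to normality. The only point requiring care is the quantifier bookkeeping: in the statement $f_{A^u}=f_A$ the element $u$ is universally quantified and the soft-set equality is itself a universal statement over the argument $x$, so I must make sure that peeling off both quantifiers yields exactly ``$f_A(uxu^{-1})=f_A(x)$ for all $u,x\in G$'' and that this matches condition~3 of Corollary~\ref{C 15} verbatim. Once that correspondence is spelled out, the theorem follows immediately with no computation.
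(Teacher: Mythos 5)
Your proposal is correct, but it takes a different route from the paper's. You reduce the theorem to Corollary \ref{C 15}: unwinding Definition \ref{C 243} turns ``$f_{A^{u}}=f_{A}$ for all $u\in G$'' into ``$f_{A}(uxu^{-1})=f_{A}(x)$ for all $u,x\in G$,'' which is the conjugation-invariance condition already declared equivalent to normality, so both directions become pure translation. The paper instead gives a self-contained computation straight from the definition of normality as an Abelian soft set: for the forward direction it writes $f_{A}(uxu^{-1})=f_{A}\left( (xu^{-1})u\right) =f_{A}(x)$ by commuting the factors, and for the converse it recovers the Abelian property via $f_{A}(xu)=f_{A}\left( x(ux)x^{-1}\right) =f_{A}(ux)$. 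Your version is shorter and makes transparent that the theorem is essentially a restatement of condition 3 of Corollary \ref{C 15} in the language of conjugate soft int-groups; the paper's version re-derives that equivalence inline, keeping the proof independent of the corollary. One caveat about your claim of a ``verbatim'' match: as printed, conditions 3--5 of Corollary \ref{C 15} quantify over $x,y\in A$, not over $x,y\in G$, and over $A$ alone the equivalence with normality genuinely fails --- Example \ref{C 210} exhibits $f_{\widetilde{A}}$ with $A=\{e,v\}\subset D_{3}$, which is conjugation-invariant for conjugators inside $A$ yet is not a normal soft int-group. So your argument is sound only under the reading $x,y\in G$, which is surely what the corollary intends (it is how the paper itself applies it in Theorems \ref{C 110} and \ref{C 220}), but the discrepancy with the printed statement is worth flagging; the paper's direct computation sidesteps this issue entirely.
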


\begin{proof}
Suppose $f_{A}\in NS_{G}(U).$ Then,
\begin{equation*}
f_{A}\left( uxu^{-1}\right) =f_{A}(xuu^{-1})=f_{A}(x)\text{, for all
}x,u\in G.
\end{equation*}%
{}

Conversely suppose that $f_{A}$ is constant on each conjugate class
of $G$. Then
\begin{equation*}
f_{A}\left( xu\right) =f_{A}(xuxx^{-1})=f_{A}(x\left( ux\right)
x^{-1})=f_{A}(ux)
\end{equation*}%
\ for all $x$,$u\in G$, so $f_{A}\in NS_{G}(U)$.
\end{proof}


\begin{definition}
\label{C 260}If $f_{G}\in S_{G}(U)$, then the set defined as$\ $%
\begin{equation*}
N(f_{G})=\{x\in G:f_{G}(xy)=f_{G}(yx)\}
\end{equation*}%
for all $y\in G,$ is called normalizer of $f_{G}$ in $G$.
\end{definition}

Clearly, for all $f_{G}\in S_{G}(U)$, the unit element of a group $G $ is in
$N(f_{G})$ and if $G$ is Abelian then $N(f_{G})=G.$

\begin{cor}
If $x\in N(f_{G})$, then $x^{-1}\in N(f_{G}).$
\end{cor}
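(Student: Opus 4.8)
The plan is to unwind the definition of the normalizer and then produce the required identity by a single well-chosen substitution. By Definition~\ref{C 260}, the hypothesis $x\in N(f_G)$ means precisely that
\begin{equation*}
f_G(xy)=f_G(yx)\quad\text{for all }y\in G,
\end{equation*}
while proving $x^{-1}\in N(f_G)$ amounts to establishing
\begin{equation*}
f_G(x^{-1}y)=f_G(yx^{-1})\quad\text{for all }y\in G.
\end{equation*}

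First I would exploit the fact that the hypothesis is universally quantified over $y$, so it remains valid after replacing $y$ by \emph{any} element of $G$. The natural choice is the substitution $y\mapsto x^{-1}yx^{-1}$, engineered so that the leading and trailing copies of $x$ cancel. Feeding this into the hypothesis gives
\begin{equation*}
f_G\bigl(x(x^{-1}yx^{-1})\bigr)=f_G\bigl((x^{-1}yx^{-1})x\bigr),
\end{equation*}
and simplifying each side with associativity together with $xx^{-1}=x^{-1}x=e$ collapses the left-hand side to $f_G(yx^{-1})$ and the right-hand side to $f_G(x^{-1}y)$. Since $y$ was arbitrary, this is exactly the identity characterizing membership in $N(f_G)$, whence $x^{-1}\in N(f_G)$.

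There is essentially no obstacle here beyond spotting the correct substitution; once it is chosen, the computation is mechanical, using only the group axioms and Definition~\ref{C 260}. As a cross-check I would note a second route that leans on the soft int-group axiom $f_G(a^{-1})=f_G(a)$ from Definition~\ref{B 10}: applying it twice, together with the hypothesis in the form $f_G(xy^{-1})=f_G(y^{-1}x)$, yields the chain $f_G(x^{-1}y)=f_G(y^{-1}x)=f_G(xy^{-1})=f_G(yx^{-1})$, reaching the same conclusion. I would present the first argument as the main proof, since it requires only the group structure and the definition of the normalizer and is therefore the cleaner of the two.
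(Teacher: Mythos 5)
Your proposal is correct and is essentially the paper's own argument: the paper's chain $f_G(x^{-1}u)=f_G(x^{-1}u(xx^{-1}))=f_G(x^{-1}(ux)x^{-1})=f_G(x^{-1}(xu)x^{-1})=f_G(ux^{-1})$ is exactly the normalizer identity applied at the element $x^{-1}ux^{-1}$, i.e.\ your substitution $y\mapsto x^{-1}yx^{-1}$. The alternative route via $f_G(a^{-1})=f_G(a)$ is a fine cross-check but adds nothing beyond the paper's approach.
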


\begin{proof}
Let $x\in N(f_{G}).$ Then, for all $u\in G$,
\begin{equation*}
f_{G}(x^{-1}u)=f_{G}(x^{-1}u\left( xx^{-1}\right)
)=f_{G}(x^{-1}\left( ux\right) x^{-1})=f_{G}(x^{-1}\left( xu\right)
x^{-1})=f_{G}(ux^{-1})
\end{equation*}%
so $x^{-1}\in N(f_{G}).$\bigskip
\end{proof}


\begin{cor}
If $f_{G}\in S_{G}(U)$, then
\begin{equation}
N(f_{G})=\left\{ u\in G:f_{G^{u}}=f_{G}\right\} .  \label{C 2650}
\end{equation}
\end{cor}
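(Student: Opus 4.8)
The plan is to establish the set equality (\ref{C 2650}) by proving the two inclusions separately, in each case simply unwinding the two definitions and applying a single well-chosen substitution. First I would recall what each side means: by Definition \ref{C 260}, an element $u$ lies in $N(f_{G})$ exactly when $f_{G}(uy)=f_{G}(yu)$ holds for every $y\in G$, while by Definition \ref{C 243} the equality $f_{G^{u}}=f_{G}$ means precisely that $f_{G}(uxu^{-1})=f_{G}(x)$ for every $x\in G$. Thus both sides are families of identities indexed by $G$, and the task reduces to showing that each family implies the other.

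For the inclusion $N(f_{G})\subseteq\{u\in G:f_{G^{u}}=f_{G}\}$, I would fix $u\in N(f_{G})$ and an arbitrary $x\in G$, and apply the normalizer identity with $y=xu^{-1}$. Since $(xu^{-1})u=x$, this yields $f_{G}(uxu^{-1})=f_{G}(x)$, and as $x$ was arbitrary we obtain $f_{G^{u}}=f_{G}$.

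For the reverse inclusion, I would fix $u$ with $f_{G^{u}}=f_{G}$ and an arbitrary $y\in G$, and apply the conjugacy identity with $x=yu$. Since $u(yu)u^{-1}=uy$, this gives $f_{G}(uy)=f_{G}(yu)$, and as $y$ was arbitrary we conclude $u\in N(f_{G})$.

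There is no genuine obstacle here; the whole argument is a pair of one-line verifications. The only points requiring care are selecting the correct conjugating substitution in each direction ($y=xu^{-1}$ going one way and $x=yu$ coming back), and keeping straight that the conjugacy condition is $f_{G}(uxu^{-1})=f_{G}(x)$ rather than its inverse-conjugate form. Once those are fixed, the group-theoretic simplifications $(xu^{-1})u=x$ and $u(yu)u^{-1}=uy$ close both inclusions immediately.
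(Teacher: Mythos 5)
Your proof is correct: the paper states this corollary without any proof, and your two-inclusion argument by direct substitution ($y=xu^{-1}$ for one direction, $x=yu$ for the other) is exactly the routine verification it leaves implicit. One remark: the paper is internally inconsistent about the conjugation convention --- Definition \ref{C 243} sets $f_{G^{u}}(x)=f_{G}(uxu^{-1})$, whereas the proofs of Theorems \ref{C 270} and \ref{C 290} use $f_{G^{u}}(x)=f_{G}(u^{-1}xu)$ --- but the two conditions carve out the same set (replace $x$ by $u^{-1}xu$ in one identity to obtain the other), so your argument, which correctly follows Definition \ref{C 243}, is unaffected.
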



\begin{thrm}
\label{C 270}Let $f_{G}\in S_{G}(U)$. Then,

\begin{enumerate}
\item $N(f_{G})\leq G$,

\item The restriction of $f_{G}$ to $N\left( f_{G}\right) $ is a normal soft
int-group, that is $f_{G}|_{N(f_{G})}\in NS_{G}(U)$,

\item $f_{G}\in NS_{G}(U)$ if and only if $N(f_{G})=G.$
\end{enumerate}
\end{thrm}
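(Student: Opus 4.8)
The plan is to treat the three items in order; only the first needs a genuine computation, while items 2 and 3 will drop out almost immediately from the defining property of $N(f_G)$ in Definition \ref{C 260}.

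For item 1, I would invoke the subgroup criterion. The remark preceding the theorem already records $e\in N(f_G)$, and the Corollary immediately before this theorem gives closure under inverses, so the only outstanding point is closure under products. Here I would take $x,z\in N(f_G)$ and verify $f_G((xz)y)=f_G(y(xz))$ for every $y\in G$ by substituting into the defining equalities. Applying the membership of $x$ to the element $zy$ gives $f_G(x(zy))=f_G((zy)x)$, i.e. $f_G(xzy)=f_G(zyx)$; applying the membership of $z$ to the element $yx$ gives $f_G(z(yx))=f_G((yx)z)$, i.e. $f_G(zyx)=f_G(yxz)$. Chaining these and using associativity yields $f_G(xzy)=f_G(zyx)=f_G(yxz)=f_G(y(xz))$, so $xz\in N(f_G)$ and $N(f_G)\leq G$. (One could instead use the characterization \eqref{C 2650} and argue that $N(f_G)$ is the stabilizer of $f_G$ under conjugation, but the direct substitution is cleaner.)

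For item 2, by Theorem \ref{B 210} the restriction $f_G|_{N(f_G)}$ is a soft int-group over the subgroup $N(f_G)$ produced in item 1. It then only remains to check that it is an Abelian soft set in the sense of Definition \ref{B 70}, namely $f_G(xy)=f_G(yx)$ for all $x,y\in N(f_G)$. This is immediate from the definition of the normalizer: each $x\in N(f_G)$ satisfies $f_G(xy)=f_G(yx)$ for \emph{all} $y\in G$, in particular for all $y\in N(f_G)$. Hence the restriction is a normal soft int-group by Corollary \ref{C 15}.

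For item 3, both directions are a direct unwinding of definitions. If $f_G\in NS_G(U)$, then $f_G$ is Abelian, so $f_G(xy)=f_G(yx)$ for all $x,y\in G$; thus every $x\in G$ lies in $N(f_G)$ and $N(f_G)=G$. Conversely, if $N(f_G)=G$, then every $x\in G$ satisfies $f_G(xy)=f_G(yx)$ for all $y\in G$, which is exactly the Abelian condition, so $f_G\in NS_G(U)$. I expect no serious difficulty anywhere; the only place demanding care is the product-closure step of item 1, where the associative regroupings and the correct substituted elements (namely $zy$ for $x$ and $yx$ for $z$) must be tracked accurately.
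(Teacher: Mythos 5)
Your proposal is correct, and it diverges from the paper's proof in ways worth recording. For item 1 both arguments reduce to substituting into the defining equalities of $N(f_G)$: the paper runs the one-step test, showing $xy^{-1}\in N(f_G)$ in a single displayed chain (also leaning on the preceding corollary for $y^{-1}\in N(f_G)$), while you separate product-closure from inverse-closure; your bookkeeping (apply $x$'s membership to the element $zy$, then $z$'s membership to $yx$) is in fact the tighter version, since the paper's middle step swaps $y^{-1}u$ to $uy^{-1}$ \emph{inside} $f_G(x(\cdot))$ citing only $y^{-1}\in N(f_G)$, and that partial swap is really justified only by the kind of two-sided chain you wrote. Item 2 is the same in both proofs: Theorem \ref{B 210} plus restriction of the commuting property to $N(f_G)$; your wording is more careful than the paper's phrase ``since $N(f_G)$ is Abelian'', which is not true of the group $N(f_G)$ in general --- what holds, as you say, is that $f_G$ commutes with every element of $N(f_G)$ against all of $G$. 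The genuine difference is item 3: the paper detours through conjugate soft int-groups, proving $f_{G^u}=f_G$ for every $u\in G$ in the forward direction and, for the converse, writing $f_G(xy)=f_{G^{x^{-1}}}(yx)=f_G(yx)$ via the characterization (\ref{C 2650}); you instead observe that the condition defining membership in $N(f_G)$, quantified over all $x\in G$, is verbatim the Abelian condition of Definition \ref{B 70}, so the equivalence is a direct unwinding of Definitions \ref{C 260} and \ref{C 10}. Your route is shorter and more elementary; the paper's conjugation language buys continuity with Theorems \ref{C 246}, \ref{C 290} and \ref{C 300}, where conjugates genuinely matter, but nothing in item 3 itself requires it.
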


\begin{proof}
Let $f_{G}\in S_{G}(U).$

\begin{enumerate}
\item Obviously $N(f_{G})\neq \phi $, since $e\in N(f_{G}).$\ Let $x,y\in
N(f_{G}).$ Then, for all $u\in G$,
\begin{eqnarray*}
f_{G}(\left( xy^{-1}\right) u) &=&f_{G}(x\left( y^{-1}u\right) ) \\
&=&f_{G}(x\left( uy^{-1}\right) )\text{ (since }y^{-1}\in N(f_{G})\text{)} \\
&=&f_{G}(\left( xu\right) y^{-1}) \\
&=&f_{G}(\left( ux\right) y^{-1}) \\
&=&f_{G}(u\left( xy^{-1}\right) )
\end{eqnarray*}%
so $xy^{-1}\in N(f_{G}).$ Hence $N(f_{G})$ is a subgroup of $G$.

\item $f_{G}|_{N(f_{G})}$ is a soft int-group by Theorem \ref{B 210}.
Since $N(f_{G})$ is Abelian, $f_{G}|_{N(f_{G})}(xy)=f_{G}\mid
_{N(f_{G})}(yx)$, for all $x$,$y\in N(f_{G})$. Hence,
$f_{G}|_{N(f_{G})}$ is a normal soft int-group.

\item Suppose that $f_{G}$ is a normal soft int-group and $u\in G.$ Then for any $%
g\in G$\ we have
\begin{eqnarray*}
f_{G^{u}}(g) &=&f_{G}(u^{-1}gu) \\
&=&f_{G}\left( u\left( u^{-1}g\right) \right) \text{(by
assumption)} \\
&=&f_{G}\left( g\right) .
\end{eqnarray*}%
So $f_{G^{u}}=f_{G}$\ and hence\ $u\in N(f_{G})$ by (\ref{C 2650}),
which
implies $G\subseteq N(f_{G}).$ Since $N(f_{G})\leq G$ then $%
N(f_{G})\subseteq G$ and so $N(f_{G})=G.$
\end{enumerate}

Conversely, let $N(f_{G})=G$ and $x$,$y\in G.$ Then we have

\begin{equation}
f_{G}\left( xy\right) =f_{G}\left( xyxx^{-1}\right) =f_{G}\left(
\left( x^{-1}\right) ^{-1}\left( yx\right) x^{-1}\right)
=f_{G^{x^{-1}}}(yx). \label{C 270-1}
\end{equation}%
On the other hand, since $N(f_{G})=G$, for any $x\in G=N(f_{G})$ we have $%
x^{-1}\in N(f_{G})$, and so
\begin{equation}
f_{G^{x^{-1}}}(yx)=f_{G}\left( yx\right)   \label{C 270-2}
\end{equation}%
by the definition of normalizer. Thus $f_{G}\left( xy\right)
=f_{G}\left( yx\right) $, for all $x$,$y\in G$ from (\ref{C 270-1})
and (\ref{C 270-2}).

Hence, $f_{G}$ is a normal soft int-group.
\end{proof}


\begin{thrm}
\label{C 290}Let $G$ be a finite group and $f_{G}\in S_{G}(U)$ such that $%
f_{G}^{\ast }\neq \phi $. Then, the number of distinct conjugate classes of $%
f_{G}$ is equal to the index of $N(f_{G})$ in $G$.
\end{thrm}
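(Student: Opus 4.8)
The plan is to recognize this statement as an instance of the orbit--stabilizer theorem applied to the conjugation action of $G$ on $f_{G}$. By Definition \ref{C 243} the conjugates of $f_{G}$ are the soft sets $f_{G^{u}}$, $u\in G$, with $f_{G^{u}}(x)=f_{G}(uxu^{-1})$, and by the characterization (\ref{C 2650}) the normalizer $N(f_{G})$ is precisely the set of $u$ that fix $f_{G}$ under conjugation. Since $N(f_{G})\leq G$ by Theorem \ref{C 270} and $G$ is finite, the index $[G:N(f_{G})]$ is finite, so the task reduces to matching this index with the number of distinct $f_{G^{u}}$.

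First I would confirm that conjugation really is a (right) group action on these soft sets: a one-line computation gives $(f_{G^{u}})^{w}(x)=f_{G^{u}}(wxw^{-1})=f_{G}\bigl((uw)x(uw)^{-1}\bigr)=f_{G^{uw}}(x)$ for all $x\in G$, hence $(f_{G^{u}})^{w}=f_{G^{uw}}$. The orbit of $f_{G}$ is then exactly the collection of its distinct conjugate classes, so I only need to count that orbit.

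The heart of the argument is the equivalence
\begin{equation*}
f_{G^{u}}=f_{G^{v}}\quad\Longleftrightarrow\quad uv^{-1}\in N(f_{G}).
\end{equation*}
To establish it I would begin from $f_{G}(uxu^{-1})=f_{G}(vxv^{-1})$ for all $x\in G$ and substitute $y=vxv^{-1}$ (equivalently $x=v^{-1}yv$), converting the condition into $f_{G}\bigl((uv^{-1})y(uv^{-1})^{-1}\bigr)=f_{G}(y)$ for all $y\in G$, i.e. $f_{G^{uv^{-1}}}=f_{G}$, which by (\ref{C 2650}) means $uv^{-1}\in N(f_{G})$. I expect this to be the main (and only real) obstacle: the sole care needed is that $x\mapsto vxv^{-1}$ is a bijection of $G$, so that the quantifier ``for all $x$'' may be replaced by ``for all $y$'' without loss.

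Finally, since $uv^{-1}\in N(f_{G})$ is exactly the condition $N(f_{G})u=N(f_{G})v$ for right cosets, the assignment $N(f_{G})u\mapsto f_{G^{u}}$ is a well-defined injection, and it is plainly surjective onto the set of conjugates of $f_{G}$. Thus it is a bijection from the right cosets of $N(f_{G})$ in $G$ onto the distinct conjugate classes of $f_{G}$, and counting both sides yields that their number equals $[G:N(f_{G})]$. The hypothesis $f_{G}^{\ast}\neq\phi$ only excludes the degenerate empty soft set (for which both counts are trivially $1$) and plays no essential role in the bijection.
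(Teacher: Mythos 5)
Your proof is correct and follows essentially the same route as the paper's: both establish a bijection between the cosets of $N(f_{G})$ and the distinct conjugates of $f_{G}$, and your key step --- converting $f_{G^{u}}=f_{G^{v}}$ into $uv^{-1}\in N(f_{G})$ via the substitution $y=vxv^{-1}$ and (\ref{C 2650}) --- is exactly the paper's substitution $g=x_{j}tx_{j}^{-1}$ in its injectivity argument, while your well-definedness step matches the paper's computation that elements of one coset yield one conjugate. The only cosmetic differences are your orbit--stabilizer packaging and your use of right cosets where the paper uses left cosets (a consequence of the paper's proof conjugating by $u^{-1}$ rather than $u$), neither of which changes the substance.
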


\begin{proof}
Since $N(f_{G})\leq G$,\ $G$ can be written as a union of cosets of
$N(f_{G})
$, as%
\begin{equation*}
G=x_{1}N(f_{G})\cup x_{2}N(f_{G})\cup \ldots \cup x_{k}N(f_{G}),
\end{equation*}%
where $k$ is the number of distinct cosets, that is $k=\left\vert
G:N(f_{G})\right\vert .$ Let $x\in N(f_{G})$ and choose $i$ such
that $1\leq
i\leq k$. Then for any $g\in G$,%
\begin{eqnarray*}
f_{G^{x_{i}x}}\left( g\right)  &=&f_{G}\left( \left( x_{i}x\right)
^{-1}g\left( x_{i}x\right) \right)  \\
&=&f_{G}\left( x^{-1}\left( x_{i}^{-1}gx_{i}\right) x\right)  \\
&=&f_{G^{x}}\left( x_{i}^{-1}gx_{i}\right)  \\
&=&f_{G}\left( x_{i}^{-1}gx_{i}\right) \text{ (since }x\in N(f_{G})) \\
&=&f_{G^{x_{i}}}\left( g\right) .
\end{eqnarray*}%
Thus, we have $f_{G^{x_{i}x}}\left( g\right) =f_{G^{x_{i}}}\left( g\right) $%
, for all $x\in N(f_{G})$ and $1\leq i\leq k$.

So any two elements in $G$, which lie in the same coset
$x_{i}N(f_{G})$\ give rise to the same conjugate $f_{G^{x_{i}}}$ of
$f_{G}.$ Now we show that
two distinct cosets give two distinct conjugates of $f_{G}.$ Suppose that $%
f_{G^{x_{i}}}=f_{G^{xj}}$, where $i\neq j$ and $1\leq i$,$j\leq k.$
Thus,
for all $g\in G$,%
\begin{eqnarray*}
f_{G^{x_{i}}} &=&f_{G^{xj}} \\
&\Leftrightarrow &f_{G^{x_{i}}}\left( g\right) =f_{G^{xj}}\left( g\right)  \\
&\Leftrightarrow &f_{G}\left( x_{i}^{-1}gx_{i}\right) =f_{G}\left(
x_{j}^{-1}gx_{j}\right) .
\end{eqnarray*}%
If we choose $g=x_{j}tx_{j}^{-1}$, it follows that%
\begin{equation*}
f_{G}\left( x_{i}^{-1}\left( x_{j}tx_{j}^{-1}\right) x_{i}\right)
=f_{G}\left( x_{j}^{-1}\left( x_{j}tx_{j}^{-1}\right) x_{j}\right)
\end{equation*}%
\begin{eqnarray*}
&\Rightarrow &f_{G}\left( \left( x_{j}^{-1}x_{i}\right) ^{-1}t\left(
x_{j}^{-1}x_{i}\right) \right) =f_{G}\left( t\right) \text{ for all
}t\in G
\\
&\Rightarrow &f_{G^{x_{j}^{-1}x_{i}}}\left( t\right) =f_{G}\left(
t\right)
\text{ for all }t\in G \\
&\Rightarrow &x_{j}^{-1}x_{i}\in N(f_{G}) \\
&\Rightarrow &x_{i}N(f_{G})=x_{j}N(f_{G}).
\end{eqnarray*}%
However, if $i\neq j$, this is not possible when we consider the
decomposition of $G$ as a union of cosets of $N(f_{G}).$ Hence the
number of distinct conjugates of $f_{G}$ is equal to $\left\vert
G:N(f_{G})\right\vert $.
\end{proof}


\begin{thrm}
\label{C 300}Let $f_{A}\in S_{G}(U)$ and $f_{A^{u}}$ be as in Definition (%
\ref{C 243}). Then,

\begin{enumerate}
\item $\underset{u\in G}{\widetilde{\bigcap }}f_{A^{u}}$ is a normal soft
int-group,

\item $\underset{u\in G}{\widetilde{\bigcap }}f_{A^{u}}$ is the largest
normal soft int-group in $G$, contained in $f_{A}$.
\end{enumerate}
\end{thrm}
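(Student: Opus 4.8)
The plan is to set $f_{N}=\underset{u\in G}{\widetilde{\bigcap }}f_{A^{u}}$, so that by Definition \ref{A 40} together with Definition \ref{C 243} we have $f_{N}(x)=\bigcap_{u\in G}f_{A^{u}}(x)=\bigcap_{u\in G}f_{A}(uxu^{-1})$ for all $x\in G$. For part (1) I would first check that each conjugate $f_{A^{u}}$ is itself a soft int-group, using only $f_{A}\in S_{G}(U)$: indeed $f_{A^{u}}(xy)=f_{A}\big((uxu^{-1})(uyu^{-1})\big)\supseteq f_{A}(uxu^{-1})\cap f_{A}(uyu^{-1})=f_{A^{u}}(x)\cap f_{A^{u}}(y)$, and $f_{A^{u}}(x^{-1})=f_{A}\big((uxu^{-1})^{-1}\big)=f_{A^{u}}(x)$. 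Hence $\{f_{A^{u}}:u\in G\}$ is a family of soft int-groups with common parameter set $G$, and Theorem \ref{B 220} (applied with index set $I=G$ and each $A_{i}=G\leq G$) yields $f_{N}\in S_{G}(U)$.

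Next I would establish normality of $f_{N}$ through Corollary \ref{C 15}, verifying condition (3), namely $f_{N}(xyx^{-1})=f_{N}(y)$ for all $x,y\in G$. The key step is the bijective reindexing $v=ux$: with $x$ fixed, the map $u\mapsto ux$ permutes $G$, so
\[
f_{N}(xyx^{-1})=\bigcap_{u\in G}f_{A}\big((ux)y(ux)^{-1}\big)=\bigcap_{v\in G}f_{A}(vyv^{-1})=f_{N}(y).
\]
By Corollary \ref{C 15} this gives $f_{N}\in NS_{G}(U)$, completing part (1). Equivalently one could show $f_{N^{w}}=f_{N}$ for every $w\in G$ and invoke Theorem \ref{C 246}.

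For part (2), the containment $f_{N}\widetilde{\subseteq }f_{A}$ follows by isolating the term $u=e$ in the intersection: $f_{N}(x)=\bigcap_{u\in G}f_{A}(uxu^{-1})\subseteq f_{A}(exe^{-1})=f_{A}(x)$ for all $x\in G$. To see that $f_{N}$ is the \emph{largest} such object, I would take an arbitrary $f_{B}\in NS_{G}(U)$ with $f_{B}\widetilde{\subseteq }f_{A}$ and show $f_{B}\widetilde{\subseteq }f_{N}$. Since $f_{B}$ is normal it is constant on conjugate classes (Corollary \ref{C 15}), so $f_{B}(x)=f_{B}(uxu^{-1})$ for every $u\in G$; combining this with $f_{B}(uxu^{-1})\subseteq f_{A}(uxu^{-1})$ gives $f_{B}(x)\subseteq f_{A}(uxu^{-1})$ for all $u$. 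Intersecting over $u$ yields $f_{B}(x)\subseteq \bigcap_{u\in G}f_{A}(uxu^{-1})=f_{N}(x)$, i.e.\ $f_{B}\widetilde{\subseteq }f_{N}$.

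The argument is essentially routine, and the only point needing genuine care is the application of Theorem \ref{B 220}: one must first confirm that the family members are bona fide soft int-groups (done above) rather than merely soft sets. The conceptual heart of the proof is the two symmetry observations — the reindexing $u\mapsto ux$ that forces $f_{N}$ to be conjugation-invariant in part (1), and the "constant on conjugate classes" property of an arbitrary normal $f_{B}$ that pushes its values inside every conjugate of $f_{A}$ in part (2). I do not anticipate a serious obstacle beyond bookkeeping.
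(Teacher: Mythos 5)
Your proof is correct and follows essentially the same route as the paper: pointwise intersection plus Theorem \ref{B 220} for the soft int-group property, the reindexing $u\mapsto ux$ for conjugation-invariance, and the observation that a normal $f_{B}\widetilde{\subseteq }f_{A}$ satisfies $f_{B}=f_{B^{u}}\widetilde{\subseteq }f_{A^{u}}$ for maximality. You are in fact slightly more careful than the paper, which merely asserts that each $f_{A^{u}}$ is a soft int-group and leaves the containment $\widetilde{\bigcap }_{u}f_{A^{u}}\widetilde{\subseteq }f_{A}$ implicit, whereas you verify both explicitly.
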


\begin{proof}
Let $f_{A}\in S_{G}(U)$. Then,

\begin{enumerate}
\item $\underset{u\in G}{\widetilde{\bigcap }}f_{A^{u}}$ is a soft int-group, since $f_{A^{u}}$\ are soft int-groups, for all $u\in
G$, by
Theorem \ref{B 220}$.$ Now, for all $x$,$y\in G$%
\begin{eqnarray*}
\underset{u\in G}{\bigcap }f_{A^{u}}\left( xyx^{-1}\right)  &=&\underset{%
u\in G}{\bigcap }f_{A}\left( u\left( xyx^{-1}\right) u^{-1}\right)  \\
&=&\underset{u\in G}{\bigcap }f_{A}\left( \left( ux\right) y\left(
ux\right)
^{-1}\right)  \\
&=&\underset{u\in G}{\bigcap }f_{A^{ux}}\left( y\right)  \\
&=&\underset{u\in G}{\bigcap }f_{A^{u}}\left( y\right) \text{ }.
\end{eqnarray*}%
since $f_{A^{u}}$\ and $f_{A^{ux}}$\ are in the same conjugate class of $%
f_{A}$, for all $x\in G.$ Thus, $\underset{u\in G}{\widetilde{\bigcap }}%
f_{A^{u}}$ is a normal soft int-group, by Theorem \ref{C 246}.

\item Let $f_{B}$ be a normal soft int-group\ satisfying $f_{B}\subseteq
f_{A}.$ Then $f_{B}=f_{B^{u}}\widetilde{\subseteq }f_{A^{u}}$ for
all $u\in G
$, by assumption. Thus, $f_{B}\subseteq \underset{u\in G}{%
\widetilde{\bigcap }}f_{A^{u}}.$ Therefore $\underset{u\in G}{\widetilde{%
\bigcap }}f_{A^{u}}$\ is the largest normal soft int-group in $G$,
contained in $f_{A}$.
\end{enumerate}
\end{proof}


Now, we introduce the notion of coset.

Let $f_{G}\in S_{G}(U)$ and $a\in G$. Then the soft subsets $f_{a\left(
f_{G}(e)\right) }\ast f_{G}$ and $f_{G}\ast f_{a\left( f_{G}(e)\right) }$
are referred to as the left coset and right coset of $f_{G}$ with respect to
$a$.

From Corollary \ref{B 420}, we have that $\left( f_{a\left( f_{G}(e)\right)
}\ast f_{G}\right) \left( y\right) =f_{G}\left( a^{-1}y\right) $ and $\left(
f_{G}\ast f_{a\left( f_{G}(e)\right) }\right) \left( y\right) =f_{G}\left(
ya^{-1}\right) $, since $f_{G}(e)\supseteq f_{G}(x)$, for all $x\in G$. On
the base of these facts, the following definition is given:


\begin{definition}
\label{C 340}\cite{cag-11} Let $f_{G}\in S_{G}(U)$ and $a\in G$. Then, soft
left coset of $f_{G}$, denoted by $af_{G}$, is defined by the approximation
function $(af_{G})(x)=f_{G}(a^{-1}x)$ for all $x\in G$.
\end{definition}

\bigskip Similarly, right coset of $f_{G}$ can be defined by the
approximation function $(f_{G}a)(x)=f_{G}(xa^{-1})$\ for all $x\in G$ and
denoted by $f_{G}a$.

If $f_{G}\in NS_{G}(U)$, then soft the left coset is equal to the soft right
coset. Thus in this case, we call only soft coset and denote by $af_{G}$.


Definition of coset above is analogues to definition of classical algebra as
follows:

Let $H\leq G$ and $f_{H}$ be the characteristic function of $H$, that is%
\begin{equation*}
f_{H}(x)=\left\{
\begin{array}{ll}
U & for\text{ }x\in H \\
\phi & for\text{ }x\in G\backslash H%
\end{array}%
\right. .
\end{equation*}%
It is well known that for any $a\in G$, $aG=G$.

Now if $g\in H$, then $ag\in aH$, so
\begin{equation*}
af_{H}(ag)=f_{H}\left( a^{-1}ag\right) =f_{H}\left( g\right) =U.
\end{equation*}

If $g\notin H$, then $ag\notin aH$, and so
\begin{equation*}
af_{H}(ag)=f_{H}\left( a^{-1}ag\right) =f_{H}\left( g\right) =\phi .
\end{equation*}

Thus, it follows that $af_{H}$ is a function on $G$, such that%
\begin{equation*}
af_{H}(x)=\left\{
\begin{array}{ll}
U & for\text{ }x\in aH \\
\phi & for\text{ }x\in G\backslash \left( aH\right)%
\end{array}%
\right. .
\end{equation*}

This shows that $af_{H}$ is the characteristic function of $aH$.



\begin{prop}
\label{C 345}Let $f_{G}\in S_{G}(U)$. Then, there is a one-to-one
correspondence between the set of right cosets and the set of left cosets of
$f_{G}$\ in $G$.
\end{prop}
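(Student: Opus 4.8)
The plan is to realize the correspondence concretely through the soft-set inversion operation, rather than through the naive assignment $af_{G}\mapsto f_{G}a^{-1}$ on representatives. Recall from Definition~\ref{B 370} that every soft set $h$ has an inverse $h^{-1}$ given by $h^{-1}(x)=h(x^{-1})$, and that because $f_{G}$ is a soft int-group we have $f_{G}(x^{-1})=f_{G}(x)$ for all $x\in G$ by Definition~\ref{B 10}. The operation $h\mapsto h^{-1}$ is an involution on the set of all soft sets, hence a bijection; so the entire proof reduces to checking that this one map carries left cosets of $f_{G}$ (Definition~\ref{C 340}) to right cosets and right cosets back to left cosets.

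First I would compute the inverse of a left coset. For all $x\in G$,
\begin{equation*}
(af_{G})^{-1}(x)=(af_{G})(x^{-1})=f_{G}(a^{-1}x^{-1})=f_{G}\left( (xa)^{-1}\right) =f_{G}(xa)=(f_{G}a^{-1})(x),
\end{equation*}
so $(af_{G})^{-1}=f_{G}a^{-1}$, which is a right coset. Symmetrically,
\begin{equation*}
(f_{G}a)^{-1}(x)=(f_{G}a)(x^{-1})=f_{G}(x^{-1}a^{-1})=f_{G}\left( (ax)^{-1}\right) =f_{G}(ax)=(a^{-1}f_{G})(x),
\end{equation*}
so $(f_{G}a)^{-1}=a^{-1}f_{G}$, a left coset. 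The middle equality of each line is exactly where the int-group symmetry $f_{G}(y^{-1})=f_{G}(y)$ enters.

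These two identities say precisely that inversion sends the set of left cosets into the set of right cosets and the set of right cosets into the set of left cosets. Since inversion is its own inverse, its restriction to the left cosets is injective, and it is onto the right cosets because every right coset $f_{G}a$ is the image of the left coset $a^{-1}f_{G}$, as $(a^{-1}f_{G})^{-1}=f_{G}a$. Hence $h\mapsto h^{-1}$ restricts to a bijection between the left cosets and the right cosets of $f_{G}$, giving the required one-to-one correspondence.

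The proof is short once the correct map is identified, so the real point is choosing it well; the inversion formulation makes well-definedness automatic, since applying one fixed function to equal soft sets yields equal outputs. A more pedestrian route would define the assignment $af_{G}\mapsto f_{G}a^{-1}$ on representatives and verify $af_{G}=bf_{G}\Leftrightarrow f_{G}a^{-1}=f_{G}b^{-1}$; setting $t=b^{-1}a$ this unwinds to the equivalence of the left-translation condition $f_{G}(ty)=f_{G}(y)$ with the right-translation condition $f_{G}(yt)=f_{G}(y)$, and one can show both reduce to $t\in e_{f_{G}}$ (Definition~\ref{B 90}) using $f_{G}(e)\supseteq f_{G}(x)$ from Theorem~\ref{B 20} together with the groupoid inequality. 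I would only fall back on that bookkeeping if a referee wanted the representative-level statement made explicit; the involution argument is cleaner and avoids it entirely.
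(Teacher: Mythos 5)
The paper states Proposition~\ref{C 345} without any proof, so there is nothing in the source to compare your argument against; judged on its own terms, your proof is correct and complete. The two key identities $(af_{G})^{-1}=f_{G}a^{-1}$ and $(f_{G}a)^{-1}=a^{-1}f_{G}$ are computed correctly, and they use the int-group symmetry $f_{G}(x^{-1})=f_{G}(x)$ exactly where it is needed; combined with the fact that $h\mapsto h^{-1}$ is an involution on $S(U)$, this immediately gives a bijection between the two families of cosets. Your choice to define the correspondence by applying a fixed map to the coset soft sets themselves, rather than on representatives, is the right one: it sidesteps the well-definedness issue that the naive assignment $af_{G}\mapsto f_{G}a^{-1}$ would raise (namely that $af_{G}=bf_{G}$ must force $f_{G}a^{-1}=f_{G}b^{-1}$, which would otherwise have to be checked via Theorem~\ref{C 360} and the $e$-set $e_{f_{G}}$). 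This is the exact soft-set analogue of the classical proof that inversion $S\mapsto S^{-1}$ on subsets of $G$ induces the bijection $aH\mapsto Ha^{-1}$, so it is also the argument the author most plausibly had in mind; your fallback representative-level sketch is a legitimate alternative but, as you note, strictly more bookkeeping.
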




\begin{thrm}
\label{C 355}Let $f_{G}\in NS_{G}(U).$Then, for any $a\in G$%
\begin{equation*}
af_{G}(ga)=af_{G}\left( ag\right) =f_{G}\left( g\right) ,\text{ for all }%
g\in G.
\end{equation*}
\end{thrm}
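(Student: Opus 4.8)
The plan is to unwind both equalities directly from the definition of the soft left coset (Definition \ref{C 340}), which states that $(af_{G})(x)=f_{G}(a^{-1}x)$ for all $x\in G$, and then to invoke normality only where it is genuinely needed. The second equality is essentially free: substituting $x=ag$ into the coset definition gives $af_{G}(ag)=f_{G}(a^{-1}(ag))=f_{G}(a^{-1}ag)=f_{G}(g)$, since $a^{-1}a=e$. No hypothesis on $f_{G}$ beyond it being a soft set is used here; this part holds for any $f_{G}\in S_{G}(U)$.

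For the first equality I would substitute $x=ga$ into the same definition to obtain $af_{G}(ga)=f_{G}(a^{-1}(ga))=f_{G}(a^{-1}ga)$. The element $a^{-1}ga$ is the conjugate of $g$ by $a^{-1}$, so to finish I would appeal to the normality of $f_{G}$. By Corollary \ref{C 15}, $f_{G}\in NS_{G}(U)$ is equivalent to $f_{G}$ being constant on each conjugate class, i.e. $f_{G}(xyx^{-1})=f_{G}(y)$ for all $x,y\in G$. Taking $x=a^{-1}$ and $y=g$ yields $f_{G}(a^{-1}g(a^{-1})^{-1})=f_{G}(a^{-1}ga)=f_{G}(g)$, which is exactly what is required. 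Chaining the two computations then gives $af_{G}(ga)=af_{G}(ag)=f_{G}(g)$.

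There is no real obstacle here; the whole statement reduces to a careful bookkeeping of which factor of $a$ cancels and a single application of the conjugation-invariance that characterizes normal soft int-groups. The only point worth flagging is to make sure the conjugation identity from Corollary \ref{C 15} is applied with the correct substitution (conjugating by $a^{-1}$ rather than by $a$), so that the surviving expression matches $a^{-1}ga$; using Theorem \ref{C 246}, which asserts $f_{G^{u}}=f_{G}$ for all $u\in G$, would give the same conclusion and could serve as an equally clean alternative justification for that step.
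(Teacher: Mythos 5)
Your proposal is correct and follows essentially the same route as the paper's proof: unwind Definition \ref{C 340} to get $af_{G}(ga)=f_{G}(a^{-1}ga)$ and then invoke conjugation-invariance of a normal soft int-group, while the equality $af_{G}(ag)=f_{G}(a^{-1}ag)=f_{G}(g)$ follows by cancellation (the part the paper dismisses as ``similar''). Your added observation that the second equality needs no normality at all is a correct refinement, not a different method.
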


\begin{proof}
Let $f_{G}\in NS_{G}(U)$. Then, for any $a\in G$%
\begin{equation*}
af_{G}(ga)=f_{G}\left( a^{-1}ga\right) =f_{G}\left( g\right)
\end{equation*}%
since $f_{G}\in NS_{G}(U).$ The other part is similar.
\end{proof}


\begin{thrm}
\label{C 360}\cite{cag-11} Let $f_{G}\in S_{G}(U)$. Then, $%
af_{G}=bf_{G}\Leftrightarrow ae_{f_{G}}=be_{f_{G}}$ for all $a$,$b\in G$.
\end{thrm}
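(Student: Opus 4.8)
The plan is to reduce both sides of the claimed equivalence to the single membership condition $b^{-1}a\in e_{f_{G}}$, and then to bridge the two using the fact that multiplying an argument of $f_{G}$ on the left by an element of the $e$-set leaves the value unchanged. Unwinding Definition \ref{C 340}, the soft equality $af_{G}=bf_{G}$ means $f_{G}(a^{-1}x)=f_{G}(b^{-1}x)$ for every $x\in G$, while $ae_{f_{G}}=be_{f_{G}}$ is ordinary coset equality for the subgroup $e_{f_{G}}\leq G$ (a subgroup by Theorem \ref{B 100}), which is equivalent to $b^{-1}a\in e_{f_{G}}$.

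For the forward direction I would specialize the soft equality at $x=a$, obtaining $f_{G}(e)=f_{G}(a^{-1}a)=f_{G}(b^{-1}a)$, so that $b^{-1}a\in e_{f_{G}}$ by Definition \ref{B 90}; classical coset theory then yields $ae_{f_{G}}=be_{f_{G}}$.

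The decisive step is an auxiliary observation that I expect to be the main obstacle to establish cleanly: if $h\in e_{f_{G}}$, then $f_{G}(hy)=f_{G}(y)$ for all $y\in G$. One inclusion is immediate from the groupoid condition of Definition \ref{B 10}, namely $f_{G}(hy)\supseteq f_{G}(h)\cap f_{G}(y)=f_{G}(e)\cap f_{G}(y)=f_{G}(y)$, where the last equality uses $f_{G}(e)\supseteq f_{G}(y)$ from Theorem \ref{B 20}. For the reverse inclusion I would write $f_{G}(y)=f_{G}(h^{-1}(hy))\supseteq f_{G}(h^{-1})\cap f_{G}(hy)$; since $e_{f_{G}}$ is a subgroup, $h^{-1}\in e_{f_{G}}$, so $f_{G}(h^{-1})=f_{G}(e)\supseteq f_{G}(hy)$ by Theorem \ref{B 20}, and the intersection collapses to $f_{G}(hy)$, giving $f_{G}(y)\supseteq f_{G}(hy)$. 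Equality follows. This lemma is where the argument genuinely exploits the soft int-group structure rather than formal symbol pushing.

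For the reverse direction I would assume $ae_{f_{G}}=be_{f_{G}}$, so that $h=b^{-1}a\in e_{f_{G}}$ and $a=bh$, whence $a^{-1}x=h^{-1}b^{-1}x$. Applying the lemma with $h^{-1}\in e_{f_{G}}$ and $y=b^{-1}x$ gives $f_{G}(a^{-1}x)=f_{G}(h^{-1}b^{-1}x)=f_{G}(b^{-1}x)$ for every $x\in G$, which is exactly $af_{G}=bf_{G}$ by Definition \ref{C 340}, completing the equivalence.
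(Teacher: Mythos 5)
Your proof is correct and complete. Note that the paper itself gives no proof of this statement: it is quoted from the cited reference \cite{cag-11}, so there is no internal argument to compare against; your write-up in effect supplies the missing proof. The route you take is the natural one, and each step checks out: the specialization $x=a$ for the forward direction, the auxiliary lemma $f_{G}(hy)=f_{G}(y)$ for $h\in e_{f_{G}}$, and the translation $a=bh$ for the converse. One small simplification is available in the lemma: you do not need Theorem \ref{B 100} to conclude $h^{-1}\in e_{f_{G}}$, since the int-group condition $f_{G}(h^{-1})=f_{G}(h)=f_{G}(e)$ of Definition \ref{B 10} gives $f_{G}(h^{-1})=f_{G}(e)$ directly; this keeps the lemma self-contained at the level of Definitions \ref{B 10} and \ref{B 90} plus Theorem \ref{B 20}. (You do still use the subgroup property, harmlessly, in the reverse direction when passing from $b^{-1}a\in e_{f_{G}}$ to $h^{-1}\in e_{f_{G}}$, and in invoking classical coset equivalence $aH=bH\Leftrightarrow b^{-1}a\in H$.)
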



\begin{thrm}
\label{C 370}\cite{cag-11} Let $f_{G}\in NS_{G}(U)$. If $af_{G}=bf_{G}$,
then $f_{G}(a)=f_{G}(b)$ for any $a$,$b\in G$.
\end{thrm}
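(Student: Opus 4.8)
The plan is to unwind the equality of soft cosets into a pointwise statement and then exploit the soft int-group inequality together with the fact that $e_{f_{G}}$ is a subgroup. Recall from Definition \ref{C 340} that $(af_{G})(x)=f_{G}(a^{-1}x)$ and $(bf_{G})(x)=f_{G}(b^{-1}x)$ for all $x\in G$. Hence the hypothesis $af_{G}=bf_{G}$ says precisely that $f_{G}(a^{-1}x)=f_{G}(b^{-1}x)$ for every $x\in G$.

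First I would specialize this identity at two convenient points. Setting $x=a$ gives $f_{G}(e)=f_{G}(a^{-1}a)=f_{G}(b^{-1}a)$, and setting $x=b$ gives $f_{G}(a^{-1}b)=f_{G}(b^{-1}b)=f_{G}(e)$. Thus I obtain $f_{G}(a^{-1}b)=f_{G}(b^{-1}a)=f_{G}(e)$, which is exactly the statement that both $a^{-1}b$ and $b^{-1}a$ lie in the $e$-set $e_{f_{G}}$. Alternatively one may reach the same conclusion through Theorem \ref{C 360}, passing to $ae_{f_{G}}=be_{f_{G}}$ and using that $e_{f_{G}}\leq G$ by Theorem \ref{B 100} so that $a^{-1}b\in e_{f_{G}}$ forces $b^{-1}a=(a^{-1}b)^{-1}\in e_{f_{G}}$ as well.

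Next I would feed these into the defining inequality of a soft int-group. Writing $b=a(a^{-1}b)$ yields
\begin{equation*}
f_{G}(b)\supseteq f_{G}(a)\cap f_{G}(a^{-1}b)=f_{G}(a)\cap f_{G}(e)=f_{G}(a),
\end{equation*}
where the last equality uses $f_{G}(e)\supseteq f_{G}(a)$ from Theorem \ref{B 20}. Symmetrically, writing $a=b(b^{-1}a)$ and using $f_{G}(b^{-1}a)=f_{G}(e)$ gives $f_{G}(a)\supseteq f_{G}(b)$. Combining the two inclusions delivers $f_{G}(a)=f_{G}(b)$.

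I do not expect a serious obstacle here: once the coset equality is translated into the membership $a^{-1}b,\,b^{-1}a\in e_{f_{G}}$, the result follows from the two standard properties already available. The only point requiring care is the symmetric treatment of $a$ and $b$, which is why I record both $f_{G}(a^{-1}b)=f_{G}(e)$ and $f_{G}(b^{-1}a)=f_{G}(e)$ at the outset rather than just one of them; the normality hypothesis $f_{G}\in NS_{G}(U)$ guarantees the two-sided coset structure in the background but the inclusions themselves rest only on the soft int-group axioms.
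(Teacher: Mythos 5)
Your proof is correct. Note that the paper itself offers no proof of this statement—it is quoted from \cite{cag-11} as a known result—so there is nothing in-paper to compare against; your argument is the natural one: translate $af_{G}=bf_{G}$ pointwise, specialize at $x=a$ and $x=b$ to get $f_{G}(a^{-1}b)=f_{G}(b^{-1}a)=f_{G}(e)$, and then apply the int-group inequality together with Theorem \ref{B 20} in both directions. It is worth observing, as you yourself hint at the end, that your argument never invokes normality at all, so you have in fact proved the stronger statement that the conclusion holds for every $f_{G}\in S_{G}(U)$, with the hypothesis $f_{G}\in NS_{G}(U)$ serving only to make the left/right coset distinction immaterial.
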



Now, we introduce the notion of quotient groups.

\begin{thrm}
\label{C 380}Let $f_{G}\in NS_{G}(U)$ and define a set $G/f_{G}=\{xf_{G}:x%
\in G\}$. Then, the following assertions hold:

\begin{enumerate}
\item $\left( xf_{G}\right) \ast \left( yf_{G}\right) =\left( xyf_{G}\right)
$, for all $x$,$y\in G$,

\item $(G/f_{G},\ast )$ is a group. Moreover, if $G$ is Abelian then so is $%
G/f_{G}.$
\end{enumerate}
\end{thrm}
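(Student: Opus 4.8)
The plan is to establish assertion (1) first, since the group structure in (2) then follows by routine verification. I would begin from the representation of the soft coset as a soft product of a soft singleton with $f_G$, namely $xf_G=f_{x(f_G(e))}\ast f_G$, which was recorded immediately before Definition \ref{C 340}. Writing $\gamma=f_G(e)$ for brevity, the coset product becomes a four-fold soft product
\[
(xf_G)\ast(yf_G)=\left(f_{x\gamma}\ast f_G\right)\ast\left(f_{y\gamma}\ast f_G\right),
\]
which I would simplify by repeatedly invoking the associativity of $\ast$ (Theorem \ref{B 400}).

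The decisive step uses normality: since $f_G\in NS_G(U)$, Lemma \ref{C 227} lets me interchange $f_G$ with the soft singleton $f_{y\gamma}$, turning the middle factor $f_G\ast f_{y\gamma}$ into $f_{y\gamma}\ast f_G$. After regrouping this produces $\left(f_{x\gamma}\ast f_{y\gamma}\right)\ast\left(f_G\ast f_G\right)$. I then collapse $f_G\ast f_G=f_G$ by idempotency (Remark \ref{C 240}(4)) and evaluate the singleton product via Theorem \ref{B 380}, which gives $f_{x\gamma}\ast f_{y\gamma}=f_{(xy)(\gamma\cap\gamma)}=f_{(xy)\gamma}$. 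Reassembling yields $f_{(xy)\gamma}\ast f_G=(xy)f_G$, which is exactly (1). The only case needing a separate word is the degenerate $f_G(e)=\phi$, where $f_G$ is the empty soft set; alternatively one can avoid Theorem \ref{B 380} altogether and verify (1) directly from the product formula, bounding each term $f_G(s)\cap f_G(y^{-1}s^{-1}x^{-1}z)$ above by $f_G(y^{-1}x^{-1}z)$ via the Abelian-soft-set property and below by taking $s=e$.

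With (1) in hand, part (2) is immediate. Closure holds because (1) shows the product of two cosets is again a coset; associativity is inherited from Theorem \ref{B 400}; the coset $ef_G=f_G$ is a two-sided identity since $(xf_G)\ast(ef_G)=(xe)f_G=xf_G$ and symmetrically; and $x^{-1}f_G$ is a two-sided inverse of $xf_G$ because $(xf_G)\ast(x^{-1}f_G)=(xx^{-1})f_G=ef_G$. Finally, if $G$ is Abelian then $(xf_G)\ast(yf_G)=(xy)f_G=(yx)f_G=(yf_G)\ast(xf_G)$, so $G/f_G$ is Abelian.

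I expect the main obstacle to be the reduction in part (1): getting the bookkeeping of the four-fold product right and, in particular, correctly applying Lemma \ref{C 227} to commute $f_G$ past the singleton before collapsing $f_G\ast f_G$. Once the multiplication rule $(xf_G)\ast(yf_G)=(xy)f_G$ is secured, every remaining claim is a one-line check.
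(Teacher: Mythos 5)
Your proposal is correct and follows essentially the same route as the paper's own proof: both write each coset as $f_{x\left(f_{G}(e)\right)}\ast f_{G}$, rearrange the four-fold product by associativity (Theorem \ref{B 400}), commute $f_{G}$ past the singleton using normality (Lemma \ref{C 227}), collapse $f_{G}\ast f_{G}=f_{G}$ by idempotency (Remark \ref{C 240}), evaluate the singleton product by Theorem \ref{B 380}, and then verify the group axioms for part (2) exactly as you describe. Your side remark on the degenerate case $f_{G}(e)=\phi$ (needed for the hypothesis $\phi\subset\alpha$ in Theorem \ref{B 380}) is a small refinement the paper itself does not address.
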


\begin{proof}
Let $f_{G}\in NS_{G}(U)$.

\begin{enumerate}
\item For all $x$,$y\in G$,
\begin{eqnarray*}
\left( xf_{G}\right) \ast \left( yf_{G}\right)  &=&\left( f_{x\left(
f_{G}(e)\right) }\ast f_{G}\right) \ast \left( f_{y\left(
f_{G}(e)\right)
}\ast f_{G}\right)  \\
&=&f_{x\left( f_{G}(e)\right) }\ast \left( f_{G}\ast f_{y\left(
f_{G}(e)\right) }\right) \ast f_{G} \\
&=&f_{x\left( f_{G}(e)\right) }\ast \left( f_{y\left(
f_{G}(e)\right) }\ast
f_{G}\right) \ast f_{G}\text{ \ } \\
&=&\left( f_{x\left( f_{G}(e)\right) }\ast f_{y\left(
f_{G}(e)\right)
}\right) \ast \left( f_{G}\ast f_{G}\right)  \\
&=&\left( f_{x\left( f_{G}(e)\right) }\ast f_{y\left(
f_{G}(e)\right)
}\right) \ast f_{G}\text{ (by Remark \ref{C 240})} \\
&=&f_{\left( xy\right) \left( f_{G}(e)\right) }\ast f_{G}\text{ (by Theorem %
\ref{B 380})} \\
&=&xyf_{G}.
\end{eqnarray*}

\item $(G/f_{G},\ast )$\ is closed under the operation "$\ast $" by part 1
and it is associative by Theorem \ref{B 400}. Now, for all $x\in G$%
\begin{equation*}
f_{G}\ast xf_{G}=ef_{G}\ast xf_{G}=\left( ex\right)
f_{G}=xf_{G}=\left( xe\right) f_{G}=xf_{G}\ast ef_{G}=xf_{G}\ast
f_{G}
\end{equation*}%
so, $f_{G}=ef_{G}$ is an identity element of $G/f_{G}.$ In addition,
for all
$x\in G$%
\begin{equation*}
\left( x^{-1}f_{G}\right) \ast \left( xf_{G}\right) =\left(
x^{-1}x\right) f_{G}=ef_{G}=\left( xx^{-1}\right) f_{G}=\left(
xf_{G}\right) \ast \left( x^{-1}f_{G}\right)
\end{equation*}%
so, $\left( x^{-1}f_{G}\right) $ is the inverse of $\left(
xf_{G}\right) .$ Hence $(G/f_{G},\ast )$\ is a group.
\end{enumerate}

Moreover if $G$ is Abelian then for all $x$,$y\in G$%
\begin{equation*}
xf_{G}\ast yf_{G}=xyf_{G}=yxf_{G}=yf_{G}\ast xf_{G}
\end{equation*}%
so $G/f_{G}$ is Abelian$.$
\end{proof}


\begin{definition}
\label{C 382}Let $f_{G}\in NS_{G}(U)$. Then, the group $G/f_{G}$ defined in
Theorem \ref{C 380} is called the quotient (or factor) group of $G$ relative
to the normal soft int-group $f_{G}$.
\end{definition}


\begin{thrm}
\label{C 383}Let $f_{G}\in NS_{G}(U)$. Then, $G/f_{G}\cong G/e_{f_{G}}$.
\end{thrm}

\begin{proof}
Since $f_{G}\in NS_{G}(U)$, $e_{f_{G}}$ is a normal subgroup of $G$,
by Corollary \ref{C 221} and hence $G/e_{f_{G}}$ is a quotient
group. In
addition, $G/f_{G}$ is a group by Theorem \ref{C 380}. Now, define a map $%
\varphi :G/f_{G}\rightarrow G/e_{f_{G}}$ by setting $\varphi \left(
xf_{G}\right) =xe_{f_{G}}.$ Firstly, $\varphi $ is a homomorphism
since, for
all $xf_{G}$,$yf_{G}\in G/f_{G}$%
\begin{equation*}
\varphi \left( \left( xf_{G}\right) \ast \left( yf_{G}\right)
\right) =\varphi \left( xyf_{G}\right) =\left( xy\right)
e_{f_{G}}=\left( xe_{f_{G}}\right) \left( ye_{f_{G}}\right) =\varphi
\left( xf_{G}\right) \varphi \left( yf_{G}\right) .
\end{equation*}%
On the other hand, $\varphi $ is a bijection by Theorem \ref{C 360}.

Hence $G/f_{G}\cong G/e_{f_{G}}$.
\end{proof}


\begin{thrm}
\label{C 385}Let $f_{G}\in NS_{G}(U)$ and define a soft set $f_{G}^{\left(
\ast \right) }$ on $G/f_{G}$ by, $f_{G}^{\left( \ast \right) }\left(
xf_{G}\right) =f_{G}(x)$, for all $x\in G$. Then, $f_{G}^{\left( \ast
\right) }$ is a normal soft int-group in $G/f_{G}$.
\end{thrm}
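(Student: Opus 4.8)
The plan is to verify three things in turn: that $f_G^{(\ast)}$ is well defined as a function on the quotient group $G/f_G$, that it satisfies the two soft int-group conditions over $U$ with parameter group $G/f_G$, and finally that it is an Abelian soft set, which by Definition \ref{C 10} is exactly normality. The only genuinely non-routine point is the well-definedness; everything else is a direct transcription of the corresponding property of $f_G$ through the representative map.

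First I would settle the well-definedness, which I expect to be the crux of the argument. Since the value $f_G^{(\ast)}(xf_G)=f_G(x)$ is specified through a representative $x$ of the coset, I must check that distinct representatives of the same coset yield the same value. This is precisely the content of Theorem \ref{C 370}: if $xf_G=yf_G$ then $f_G(x)=f_G(y)$, so $f_G^{(\ast)}$ is independent of the chosen representative and hence a genuine soft set on $G/f_G$.

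Next I would verify the soft int-group axioms. Using the coset product formula $(xf_G)\ast(yf_G)=(xy)f_G$ from Theorem \ref{C 380}, together with the fact that $f_G$ is a soft int-group, I compute
\begin{equation*}
f_G^{(\ast)}\bigl((xf_G)\ast(yf_G)\bigr)=f_G^{(\ast)}\bigl((xy)f_G\bigr)=f_G(xy)\supseteq f_G(x)\cap f_G(y)=f_G^{(\ast)}(xf_G)\cap f_G^{(\ast)}(yf_G),
\end{equation*}
which gives the groupoid condition. For the inverse condition, since the inverse of $xf_G$ in $G/f_G$ is $x^{-1}f_G$ by Theorem \ref{C 380}, I obtain $f_G^{(\ast)}\bigl((xf_G)^{-1}\bigr)=f_G(x^{-1})=f_G(x)=f_G^{(\ast)}(xf_G)$, once more invoking that $f_G$ is a soft int-group. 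Thus $f_G^{(\ast)}$ is a soft int-group over $U$ with parameter group $G/f_G$.

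Finally, to establish normality I would confirm the Abelian soft set condition of Definition \ref{C 10}. Applying the coset product once more and using that $f_G\in NS_G(U)$ is itself an Abelian soft set, I get
\begin{equation*}
f_G^{(\ast)}\bigl((xf_G)\ast(yf_G)\bigr)=f_G(xy)=f_G(yx)=f_G^{(\ast)}\bigl((yf_G)\ast(xf_G)\bigr),
\end{equation*}
so $f_G^{(\ast)}$ is Abelian and therefore a normal soft int-group in $G/f_G$. As noted, the well-definedness via Theorem \ref{C 370} is the only step requiring care; the two soft int-group conditions and the Abelian condition then follow by pushing the known properties of $f_G$ through the coset multiplication rule of Theorem \ref{C 380}.
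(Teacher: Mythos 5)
Your proposal is correct and follows essentially the same route as the paper's own proof: well-definedness via Theorem \ref{C 370}, the two soft int-group conditions via the coset product rule $(xf_{G})\ast(yf_{G})=(xy)f_{G}$, and normality by transporting the Abelian property $f_{G}(xy)=f_{G}(yx)$ to the quotient. The only cosmetic difference is that you explicitly flag well-definedness as the crux and cite Theorem \ref{C 380} for the inverse coset, which the paper uses implicitly.
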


\begin{proof}
Firstly, $f_{G}^{\left( \ast \right) }$\ is well defined since, for any $%
xf_{G}$,$yf_{G}\in G/f_{G}$
\begin{eqnarray*}
\left( xf_{G}\right)  &=&\left( yf_{G}\right) \Rightarrow
f_{G}\left(
x\right) =f_{G}\left( y\right) \text{ (by the Theorem \ref{C 370})} \\
&\Rightarrow &f_{G}^{\left( \ast \right) }\left( xf_{G}\right)
=f_{G}^{\left( \ast \right) }\left( yf_{G}\right) .
\end{eqnarray*}%
Secondly, for all $x$,$y\in G$%
\begin{eqnarray*}
f_{G}^{\left( \ast \right) }\left( \left( xf_{G}\right) \ast \left(
yf_{G}\right) \right)  &=&f_{G}^{\left( \ast \right) }\left(
xyf_{G}\right)
\\
&=&f_{G}\left( xy\right)  \\
&\supseteq &f_{G}\left( x\right) \cap f_{G}\left( y\right)  \\
&=&f_{G}^{\left( \ast \right) }\left( xf_{G}\right) \cap
f_{G}^{\left( \ast \right) }\left( yf_{G}\right)
\end{eqnarray*}%
and for all $x\in G$,%
\begin{equation*}
f_{G}^{\left( \ast \right) }\left( \left( xf_{G}\right) ^{-1}\right)
=f_{G}^{\left( \ast \right) }\left( x^{-1}f_{G}\right) =f_{G}\left(
x^{-1}\right) =f_{G}\left( x\right) =f_{G}^{\left( \ast \right)
}\left( xf_{G}\right)
\end{equation*}%
thus $f_{G}^{\left( \ast \right) }\in S_{G/f_{G}}(U).$ Thirdly, for all $x$,$%
y\in G$,
\begin{eqnarray*}
f_{G}^{\left( \ast \right) }\left( \left( xf_{G}\right) \ast \left(
yf_{G}\right) \right)  &=&f_{G}^{\left( \ast \right) }\left(
xyf_{G}\right)
\\
&=&f_{G}\left( xy\right)  \\
&=&f_{G}\left( yx\right)  \\
&=&f_{G}^{\left( \ast \right) }\left( yxf_{G}\right)  \\
&=&f_{G}^{\left( \ast \right) }\left( \left( yf_{G}\right) \ast
\left( xf_{G}\right) \right)
\end{eqnarray*}%
hence $f_{G}^{\left( \ast \right) }\in NS_{G/f_{G}}(U).$
\end{proof}

\bigskip

\begin{thrm}
\label{D 376}Let $f_{A},f_{B}\in S_{H}(U)$ be such that $f_{A}\widetilde{%
\subseteq }f_{B},$ $H$ be a group and $\varphi :G\rightarrow H$ be a
homomorphism. Then $\varphi ^{-1}(f_{A})\widetilde{\subseteq }\varphi
^{-1}(f_{B})$.
\end{thrm}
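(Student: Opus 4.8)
The plan is to peel back the two relevant definitions and observe that the inclusion transfers pointwise with no real work. By Definition \ref{B 111}, the soft pre-image is given by evaluating the original soft set at the image point: $\varphi^{-1}(f_A)(x)=f_A(\varphi(x))$ and $\varphi^{-1}(f_B)(x)=f_B(\varphi(x))$ for every $x\in G$. By Definition \ref{A 30}, the hypothesis $f_A\widetilde{\subseteq}f_B$ means precisely that $f_A(y)\subseteq f_B(y)$ holds for \emph{every} $y\in H$.

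First I would fix an arbitrary $x\in G$ and set $y=\varphi(x)$, which is a legitimate element of $H$ since $\varphi:G\rightarrow H$. Applying the pointwise inclusion at this particular $y$ gives $f_A(\varphi(x))\subseteq f_B(\varphi(x))$. Rewriting both sides through the pre-image formula yields $\varphi^{-1}(f_A)(x)\subseteq\varphi^{-1}(f_B)(x)$.

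Since $x\in G$ was arbitrary, this inclusion holds for all $x$, and by Definition \ref{A 30} again this is exactly the assertion $\varphi^{-1}(f_A)\widetilde{\subseteq}\varphi^{-1}(f_B)$, which completes the argument.

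I do not expect any genuine obstacle here: the statement is a formal consequence of the fact that the soft-subset relation is defined coordinatewise and the pre-image is defined by precomposition with $\varphi$, so the inclusion is simply pulled back along $\varphi$. The only point requiring any care is bookkeeping of the domains---making sure that $f_A,f_B$ are evaluated at points of $H$ while the pre-images are evaluated at points of $G$---but this is handled automatically by substituting $y=\varphi(x)$. Notably, the homomorphism property of $\varphi$ is never used; mere well-definedness of $\varphi$ as a function suffices for this particular inclusion.
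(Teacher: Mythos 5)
Your proof is correct and takes essentially the same route as the paper's: both simply unwind the pre-image definition $\varphi^{-1}(f)(x)=f(\varphi(x))$ and apply the pointwise inclusion $f_{A}(\varphi(x))\subseteq f_{B}(\varphi(x))$ for each $x\in G$. Your closing remark is also accurate---the paper's argument likewise uses only that $\varphi$ is a function, never that it is a homomorphism.
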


\begin{proof}
For all $x\in G$
\begin{equation*}
\varphi ^{-1}\left( f_{A}\right) \left( x\right) =\text{ }f_{A}\left(
\varphi \left( x\right) \right) \subseteq \text{ }f_{B}\left( \varphi \left(
x\right) \right) =\varphi ^{-1}\left( f_{B}\right) \left( x\right) .
\end{equation*}%
So, $\varphi ^{-1}(f_{A})\widetilde{\subseteq }\varphi ^{-1}(f_{B})$.
\end{proof}

\begin{thrm}
\label{C 420}Let $f_{G}\in NS_{G}(U)$ and $H$ be a group. If $\varphi $ is
an epimorphism from $G$ onto $H$ then, $\varphi (f_{G})\in NS_{H}(U)$.
\end{thrm}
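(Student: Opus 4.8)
The plan is to verify the two defining properties of a normal soft int-group for the image $\varphi(f_G)$ over the parameter set $H$: first that $\varphi(f_G)$ is a soft int-group in $H$, and second that it is an Abelian soft set. Since $\varphi$ is an epimorphism, $\varphi^{-1}(y)\neq \phi$ for every $y\in H$, so throughout I may use the explicit formula $\varphi(f_G)(y)=\bigcup\{f_G(a):a\in G,\ \varphi(a)=y\}$ at every $y\in H$; this surjectivity is what lets the argument avoid the ``otherwise'' case in Definition \ref{B 111}.

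For the soft int-group part I would check the two conditions of Definition \ref{B 10}. For the groupoid inclusion, fix $y,z\in H$ and use distributivity of intersection over arbitrary unions to write
\begin{equation*}
\varphi(f_G)(y)\cap\varphi(f_G)(z)=\bigcup\{f_G(a)\cap f_G(b):\varphi(a)=y,\ \varphi(b)=z\}.
\end{equation*}
For each admissible pair $(a,b)$ one has $\varphi(ab)=yz$ and $f_G(ab)\supseteq f_G(a)\cap f_G(b)$, whence $f_G(a)\cap f_G(b)\subseteq \varphi(f_G)(yz)$; taking the union over all such pairs gives $\varphi(f_G)(yz)\supseteq\varphi(f_G)(y)\cap\varphi(f_G)(z)$. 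For the inverse condition, note that $\varphi(a)=y^{-1}$ if and only if $\varphi(a^{-1})=y$, together with $f_G(a)=f_G(a^{-1})$, so the two defining unions for $\varphi(f_G)(y^{-1})$ and $\varphi(f_G)(y)$ range over the same collection of values; hence $\varphi(f_G)(y^{-1})=\varphi(f_G)(y)$. Thus $\varphi(f_G)\in S_H(U)$.

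For normality I would show $\varphi(f_G)(yz)=\varphi(f_G)(zy)$ for all $y,z\in H$, which by Corollary \ref{C 15} (equivalently Definition \ref{C 10}) is exactly the Abelian-soft-set condition. Given any $w\in G$ with $\varphi(w)=yz$, choose by surjectivity some $a\in G$ with $\varphi(a)=y$ and set $w'=a^{-1}wa$. Then $\varphi(w')=y^{-1}(yz)y=zy$, and since $f_G$ is normal, Corollary \ref{C 15} gives $f_G(w')=f_G(a^{-1}wa)=f_G(w)$. Hence every value $f_G(w)$ contributing to $\varphi(f_G)(yz)$ equals a value $f_G(w')$ contributing to $\varphi(f_G)(zy)$, so $\varphi(f_G)(yz)\subseteq\varphi(f_G)(zy)$; interchanging the roles of $y$ and $z$ yields the reverse inclusion and hence equality.

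I expect the normality step to be the main obstacle, since it is where the hypothesis that $\varphi$ is onto is genuinely used: the conjugating element $a$ must be pulled back from $y\in H$ into $G$, and this is only possible because $\varphi$ is surjective. The soft int-group part is comparatively routine, requiring only distributivity of $\cap$ over arbitrary $\cup$ and the observation that the preimage-indexed families match up under inversion.
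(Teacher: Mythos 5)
Your proof is correct, and it is worth separating its two halves when comparing with the paper. For the normality step, your argument is in substance the paper's argument in a different guise: the paper writes each preimage $w$ of $xy$ as $w=uv$ with $\varphi(u)=x$, $\varphi(v)=y$ (possible by surjectivity) and then swaps $uv$ to $vu$ using the Abelian-soft-set identity; since $vu=u^{-1}(uv)u$, the paper's swap is exactly your conjugation $w\mapsto a^{-1}wa$, only you route it through the constancy-on-conjugacy-classes form of normality (Corollary \ref{C 15}) instead of applying $f_{G}(uv)=f_{G}(vu)$ directly. Where you genuinely diverge is the first half: the paper does not prove that $\varphi(f_{G})\in S_{H}(U)$ at all, it cites \cite[Theorem 19]{cag-11}, whereas you verify it from scratch --- the groupoid inequality via distributivity of $\cap$ over arbitrary unions, $\bigl(\bigcup_{\varphi(a)=y}f_{G}(a)\bigr)\cap\bigl(\bigcup_{\varphi(b)=z}f_{G}(b)\bigr)=\bigcup\{f_{G}(a)\cap f_{G}(b):\varphi(a)=y,\ \varphi(b)=z\}$, and the inverse condition via the value-preserving bijection $a\mapsto a^{-1}$ between $\varphi^{-1}(y^{-1})$ and $\varphi^{-1}(y)$. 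Both verifications are sound, so your version buys self-containedness at the cost of length, while the paper's is shorter by outsourcing that lemma; your closing remark that surjectivity is what licenses both the pointwise formula $\varphi(f_{G})(y)=\bigcup\{f_{G}(a):\varphi(a)=y\}$ and the pullback of the conjugating element is exactly the role it plays in the paper's proof as well.
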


\begin{proof}
\bigskip We have $\varphi (f_{G})\in S_{H}(U)$ (see \cite{cag-11} Theorem
19). Since $\varphi $ is onto there exist $u$,$v\in G$ such that
$\varphi \left( u\right) =x$ and $\varphi \left( v\right) =y$ for
any $x$,$y\in H.$ Thus for all $x$,$y\in H$,
\begin{eqnarray*}
\varphi (f_{G})\left( xy\right)  &=&\bigcup \left\{ f_{G}(w):w\in G,\text{ }%
\varphi (w)=xy\right\}  \\
&=&\bigcup \left\{ f_{G}(uv):uv\in G,\text{ }\varphi (uv)=xy\right\}  \\
&=&\bigcup \left\{ f_{G}(uv):vu\in G,\text{ }\varphi (u)\text{
}\varphi
(v)=xy\right\} \text{ (since }\varphi \text{ is a homomorphism)} \\
&=&\bigcup \left\{ f_{G}(uv):vu\in G,\text{ }\varphi (u)=x\text{ and }%
\varphi (v)=y\right\}  \\
&=&\bigcup \left\{ f_{G}(vu):vu\in G,\text{ }\varphi (v)\varphi
(u)=yx\right\} \text{ (since $f_{G}\in NS_{G}(U)$}) \\
&=&\bigcup \left\{ f_{G}(vu):vu\in G,\text{ }\varphi (vu)=yx\right\}  \\
&=&\varphi (f_{G})\left( yx\right) .
\end{eqnarray*}%
Hence $\varphi (f_{G})$ is a normal soft int-group.
\end{proof}


\begin{thrm}
\label{C 430}Let $H$ be a group and $f_{H}\in NS_{H}(U)$. If $\varphi $ is a
homomorphism from $G$ into $H$, then $\varphi ^{-1}(f_{H})\in NS_{G}(U)$.
\end{thrm}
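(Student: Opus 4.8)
The plan is to verify directly the two requirements that make $\varphi^{-1}(f_H)$ a normal soft int-group, namely that it is a soft int-group over $U$ and that it is an Abelian soft set, after which Definition \ref{C 10} finishes the argument. Throughout I would use the pre-image formula $\varphi^{-1}(f_H)(x)=f_H(\varphi(x))$ for all $x\in G$ coming from Definition \ref{B 111}, together with the homomorphism identities $\varphi(xy)=\varphi(x)\varphi(y)$ and $\varphi(x^{-1})=\varphi(x)^{-1}$.

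First I would establish that $\varphi^{-1}(f_H)\in S_G(U)$. For the soft groupoid condition, for arbitrary $x,y\in G$ I compute $\varphi^{-1}(f_H)(xy)=f_H(\varphi(x)\varphi(y))\supseteq f_H(\varphi(x))\cap f_H(\varphi(y))=\varphi^{-1}(f_H)(x)\cap\varphi^{-1}(f_H)(y)$, where the inclusion is precisely the soft groupoid property of $f_H$. For the inverse condition, $\varphi^{-1}(f_H)(x^{-1})=f_H(\varphi(x)^{-1})=f_H(\varphi(x))=\varphi^{-1}(f_H)(x)$, using that $f_H$ is a soft int-group. Hence $\varphi^{-1}(f_H)$ is a soft int-group over $U$.

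Next I would check the Abelian condition. For any $x,y\in G$, applying the homomorphism property and now the fact that $f_H$ is Abelian (since $f_H\in NS_H(U)$), I obtain $\varphi^{-1}(f_H)(xy)=f_H(\varphi(x)\varphi(y))=f_H(\varphi(y)\varphi(x))=f_H(\varphi(yx))=\varphi^{-1}(f_H)(yx)$. Thus $\varphi^{-1}(f_H)$ is an Abelian soft set, and together with the previous step it is a normal soft int-group by Definition \ref{C 10}.

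I do not expect any genuine obstacle here: the whole argument amounts to pushing the three defining properties of $f_H$ through $\varphi$ via the pre-image formula. The only point requiring care is applying each property of $f_H$ at the correct place, namely the groupoid inclusion and the inverse equality to get the soft int-group structure, and the identity $f_H(ab)=f_H(ba)$ (evaluated at $a=\varphi(x)$ and $b=\varphi(y)$) to get normality. It is worth noting that, unlike Theorem \ref{C 420}, no surjectivity of $\varphi$ is required; the pre-image direction goes through for an arbitrary homomorphism.
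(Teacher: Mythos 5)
Your proof is correct and follows essentially the same route as the paper: both verify the Abelian condition for $\varphi^{-1}(f_H)$ by the identical chain $\varphi^{-1}(f_H)(xy)=f_H(\varphi(x)\varphi(y))=f_H(\varphi(y)\varphi(x))=\varphi^{-1}(f_H)(yx)$, using normality of $f_H$ in $H$. The only difference is that the paper cites an external result (\cite{cag-11}, Theorem 20) for the fact that $\varphi^{-1}(f_H)\in S_G(U)$, whereas you prove it directly from the groupoid and inverse conditions, which makes your argument self-contained but is not a different method.
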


\begin{proof}
\bigskip \bigskip We have $\varphi ^{-1}(f_{H})\in S_{G}(U)$ (see \cite%
{cag-11} Theorem 20). For all $x$,$y\in G$%
\begin{eqnarray*}
\varphi ^{-1}(f_{H})\left( xy\right)  &=&f_{H}\left( \varphi \left(
xy\right) \right)  \\
&=&f_{H}\left( \varphi \left( x\right) \varphi \left( y\right) \right)  \\
&=&f_{H}\left( \varphi \left( y\right) \varphi \left( x\right)
\right) \text{
(since $f_{H}\in NS_{H}(U)$}) \\
&=&f_{H}\left( \varphi \left( yx\right) \right)  \\
&=&\varphi ^{-1}(f_{H})\left( yx\right) .
\end{eqnarray*}%
Hence $\varphi ^{-1}(f_{H})$ is normal soft int-group.
\end{proof}


\begin{lem}
\label{B 163}\bigskip Let $\varphi :A\rightarrow B$ be a function. Then, for
all $f_{B}\in S(U),$\ $f_{B}\widetilde{\supseteq }\varphi \left( \varphi
^{-1}(f_{B})\right) .$
\end{lem}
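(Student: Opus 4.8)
The plan is to verify the defining inequality of the soft subset relation (Definition \ref{A 30}) pointwise, i.e.\ to show that $\varphi\left(\varphi^{-1}(f_{B})\right)(y)\subseteq f_{B}(y)$ for every $y\in B$. Writing $g=\varphi^{-1}(f_{B})$, I recall from Definition \ref{B 111} that $g(x)=f_{B}(\varphi(x))$ for all $x\in A$, and that $\varphi(g)(y)$ is given by a union over the fibre $\varphi^{-1}(y)$ when that fibre is nonempty and equals $\phi$ otherwise. The natural strategy is therefore to split on whether $y$ lies in the image of $\varphi$.

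First I would dispose of the case where $y$ lies outside the image of $\varphi$, that is $\varphi^{-1}(y)=\phi$. Here the definition of soft image forces $\varphi(g)(y)=\phi$, and $\phi\subseteq f_{B}(y)$ holds trivially, so the inequality is immediate in this case.

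The remaining case is $\varphi^{-1}(y)\neq\phi$. Then I would unwind the definitions to obtain
\begin{equation*}
\varphi\left(\varphi^{-1}(f_{B})\right)(y)=\bigcup\{g(x):x\in A,\ \varphi(x)=y\}=\bigcup\{f_{B}(\varphi(x)):x\in A,\ \varphi(x)=y\}.
\end{equation*}
The key observation is that in every term of this union the index $x$ satisfies $\varphi(x)=y$, so $f_{B}(\varphi(x))=f_{B}(y)$; thus the union is a union of copies of the single set $f_{B}(y)$, and since the fibre is nonempty it collapses to exactly $f_{B}(y)$. Hence $\varphi(g)(y)=f_{B}(y)\subseteq f_{B}(y)$, completing the pointwise verification in this case.

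Combining the two cases yields $\varphi\left(\varphi^{-1}(f_{B})\right)(y)\subseteq f_{B}(y)$ for all $y\in B$, which is precisely $f_{B}\widetilde{\supseteq}\varphi\left(\varphi^{-1}(f_{B})\right)$. I do not anticipate any genuine obstacle here: the reason the relation is a containment rather than an equality lies solely in the points $y$ outside the image of $\varphi$, where the soft image is forced to be $\phi$ while $f_{B}(y)$ may be nonempty. On the image of $\varphi$ the computation above in fact delivers equality, mirroring the classical set-theoretic fact that $\varphi(\varphi^{-1}(S))\subseteq S$, with equality exactly on the image.
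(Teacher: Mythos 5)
Your proof is correct. Note that the paper states this lemma without any proof at all, so there is no argument to compare against; your pointwise verification (splitting on whether $y$ lies in the image of $\varphi$, and observing that over a nonempty fibre the union $\bigcup\{f_{B}(\varphi(x)):\varphi(x)=y\}$ collapses to $f_{B}(y)$) is the natural and complete argument the paper leaves implicit. Your closing observation that equality holds precisely on the image of $\varphi$ also supplies the justification for the paper's unproved follow-up remark that $f_{B}=\varphi\left(\varphi^{-1}(f_{B})\right)$ whenever $\varphi$ is surjective.
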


\bigskip In particular, if $\varphi $ is a surjective function, then $%
f_{B}=\varphi \left( \varphi ^{-1}(f_{B})\right) $.

\begin{thrm}
\bigskip \label{D 593}Let $f_{B}\in NS_{H}(U),$ $H$ be a group and $\varphi
:G\rightarrow H$ be a homomorphism. Then, $\varphi \left( \varphi
^{-1}(f_{B})\right) \in NS_{H}(U).$
\end{thrm}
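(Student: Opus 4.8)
The plan is to lean on the two transfer theorems already proved. First I would invoke Theorem~\ref{C 430}: since $f_B\in NS_H(U)$ and $\varphi:G\rightarrow H$ is a homomorphism, its soft pre-image satisfies $\varphi^{-1}(f_B)\in NS_G(U)$. This reduces the whole problem to a single assertion, namely that the soft image of a normal soft int-group under $\varphi$ is again normal; in other words, to understanding $\varphi\!\left(\varphi^{-1}(f_B)\right)$ well enough to read off the Abelian property of Corollary~\ref{C 15}.

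Next I would compute the composite explicitly. Writing $g:=\varphi\!\left(\varphi^{-1}(f_B)\right)$ and using $\varphi^{-1}(f_B)(x)=f_B(\varphi(x))$ together with the extension principle in Definition~\ref{B 111}, one obtains $g(y)=f_B(y)$ for $y\in\varphi(G)$ and $g(y)=\phi$ for $y\notin\varphi(G)$; this is exactly what Lemma~\ref{B 163} records, and its surjective refinement even gives $g=f_B$ outright. Consequently, when $\varphi$ is onto the theorem is immediate, since $g=f_B$ is normal by hypothesis.

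For a general (not necessarily surjective) homomorphism I would factor $\varphi$ as the corestriction $G\twoheadrightarrow\varphi(G)$ followed by the inclusion $\varphi(G)\hookrightarrow H$. Applying Theorem~\ref{C 420} to the epimorphism $G\rightarrow\varphi(G)$ shows that $g$, viewed as a soft set over the subgroup $\varphi(G)$, is a normal soft int-group; equivalently $g$ restricted to $\varphi(G)$ agrees with $f_B|_{\varphi(G)}$, which is Abelian because $f_B$ is. Up to this point everything is routine bookkeeping with the definitions.

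The main obstacle is the final promotion from normality over $\varphi(G)$ to normality over all of $H$. To land $g\in NS_H(U)$ I must verify the conjugation form of Corollary~\ref{C 15} (equivalently Theorem~\ref{C 246}), $g(xyx^{-1})=g(y)$ for every $x,y\in H$. When both $x$ and $y$ lie in $\varphi(G)$ this follows from the normality of $f_B$, but for $x\in H\setminus\varphi(G)$ the conjugate $xyx^{-1}$ of a point $y\in\varphi(G)$ need not stay in $\varphi(G)$, where $g$ takes the value $\phi$; so the required equality can fail unless $\varphi(G)$ happens to be normal in $H$. This is precisely the step I would scrutinize: I expect the argument to close only by invoking surjectivity (so that $\varphi(G)=H$ and Lemma~\ref{B 163} gives $g=f_B$), and I would check carefully whether the hypotheses as stated genuinely force conjugation-invariance over the whole group or whether an extra assumption such as $\varphi$ being an epimorphism is tacitly needed.
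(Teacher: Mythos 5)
Your reduction and, more importantly, your closing suspicion are both correct: under the notion of normality the paper actually operates with everywhere else (conjugation-invariance with respect to \emph{every} element of the ambient group, as used in Example \ref{C 210}, Theorem \ref{C 220}, Corollary \ref{C 221} and Theorem \ref{C 246}), the statement is false for non-surjective $\varphi$, so the obstruction you isolated cannot be argued away. Indeed the paper supplies its own counterexample: take $H=D_{3}$, let $G=A=\{e,v\}\leq D_{3}$, let $\varphi :G\rightarrow H$ be the inclusion homomorphism, and let $f_{B}=f_{\widetilde{H}}$, which lies in $NS_{H}(U)$ by Corollary \ref{C 30}. Then $\varphi ^{-1}(f_{B})=f_{\widetilde{G}}$ and $\varphi \left( \varphi ^{-1}(f_{B})\right) =f_{\widetilde{A}}$, exactly the soft set that Example \ref{C 210} shows is \emph{not} a normal soft int-group in $D_{3}$, since $f_{\widetilde{A}}(uvu^{-1})=\phi \neq U=f_{\widetilde{A}}(v)$. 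Equivalently, by Corollary \ref{C 221} the support of a member of $NS_{H}(U)$ must be a normal subgroup of $H$, whereas the support of $\varphi \left( \varphi ^{-1}(f_{B})\right) $ is $\varphi (G)\cap f_{B}^{\ast }$, which is normal only in the situation you singled out ($\varphi (G)\vartriangleleft H$, e.g.\ when $\varphi$ is onto).

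The paper's proof commits precisely the error you predicted. Its displayed chain substitutes $z=xyx^{-1}$ into $\varphi$ and ends with the quantifier ``for all $x,y\in G$'': the conjugating element and the conjugated element both come from $G$, so the inequality $\varphi \left( \varphi ^{-1}(f_{B})\right) \left( \varphi (x)\varphi (y)\varphi (x)^{-1}\right) \supseteq \varphi \left( \varphi ^{-1}(f_{B})\right) \left( \varphi (y)\right) $ is verified only at points of the image subgroup $\varphi (G)$, never for conjugation by elements of $H\setminus \varphi (G)$. That restricted statement is exactly what your corestriction argument (Theorem \ref{C 430} followed by Theorem \ref{C 420} applied to $G\rightarrow \varphi (G)$) already delivers more cleanly, and it is where any correct argument must stop. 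The theorem becomes true under one of two repairs: either assume $\varphi $ is an epimorphism, in which case Lemma \ref{B 163} gives $\varphi \left( \varphi ^{-1}(f_{B})\right) =f_{B}$ and there is nothing to prove (your second paragraph); or weaken normality to the literal wording of Corollary \ref{C 15} (quantified over the support only), under which your step via Theorem \ref{C 420} finishes the proof but which is inconsistent with Example \ref{C 210}, Theorem \ref{C 220} and Corollary \ref{C 221}. So your assessment of exactly which step fails, and why, is the right one.
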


\begin{proof}
Let $f_{B}\in NS_{H}(U).$ Then,
\begin{eqnarray*}
\varphi \left( \varphi ^{-1}(f_{B})\right) \left( xyx^{-1}\right)  &=&\cup
\left\{ \varphi ^{-1}(f_{B})(z):z\in G,\text{ }\varphi \left( z\right)
=xyx^{-1}\right\}  \\
&=&\cup \left\{ f_{B}\left( \varphi (z)\right) :z\in G,\text{ }\varphi
\left( z\right) =xyx^{-1}\right\}  \\
&=&\cup \left\{ f_{B}\left( \varphi (xyx^{-1})\right) \right\}  \\
&=&\cup \left\{ f_{B}\left( \varphi (x)\varphi (y)\varphi (x)^{-1}\right)
\right\}  \\
&\supseteq &\cup \left\{ f_{B}\left( \varphi (y)\right) \right\} \text{ \
(since }f_{B}\in NS_{H}(U)\text{)} \\
&=&\cup \left\{ \varphi ^{-1}(f_{B})(y)\right\}  \\
&=&\varphi \left( \varphi ^{-1}(f_{B})\right) \left( y\right) \text{ \ (by
Definition \ref{B 111})}
\end{eqnarray*}%
for all $x,y\in G$.
\end{proof}

\section{\protect\bigskip Conclusion}

In this paper, we studied on normal soft int-groups\ and investigate
relations with $\alpha $-inclusion and soft product. Then, we define
normalizer, quotient group and give some theorems concerning these concepts.
For future works, it is possible to study on isomorphism theorems and other
concepts of group theory.

\end{document}